\newtheorem{theorem}{Theorem}[section]
\newtheorem{lemma}[theorem]{Lemma}
\newtheorem{prop}[theorem]{Proposition}
\theoremstyle{definition}
\newtheorem{rem}[theorem]{Remark}
\numberwithin{equation}{section}
\newcommand{\refl}[1]{Lemma~{\rm \ref{#1}}}
\newcommand{\reft}[1]{Theorem~{\rm \ref{#1}}}
\newcommand{\refe}[1]{(\ref{#1})}
\newcommand{\dd}{\sqrt{-1}\partial\bar{\partial}}
\begin{document}

\title[Green's function estimates]{Green's function estimates for compact K\"ahler manifolds and applications}

\author{Weiqi Zhang}
\author{Yashan Zhang}

\thanks{This paper substantially expands an older version on arXiv (Green's function and volume noncollapsing estimates for the K\"ahler-Ricci flow, arXiv:2508.13646v1, August 2025), whose main results contain Green's function integral estimates in Theorems \ref{thmgreen} and \ref{thmgreen-q}, volume non-collapsing estimates in Theorem \ref{thmgeom} (a) and Proposition \ref{prop-vol-explicit} and their applications to the K\"ahler-Ricci flow and K\"ahler family, among others.}

\address{Hunan University, Changsha, China}

\email{weiqizhang@hnu.edu.cn}
\email{yashanzh@hnu.edu.cn}
%\date{\today}
%\address{}
%\email{}

%\keywords{}
%\subjclass[2010]{}

\begin{abstract}
Recent works of Guo-Phong-Song-Sturm established for compact K\"ahler manifolds (even for K\"ahler spaces of specific singularities) a variety of geometric estimates depending on an upper bound of $L^{1+\epsilon}$ or $L^1(\log L)^{n+\epsilon}$ norms of the volume density but not on any curvature bound, in which a key ingredient is a uniform integral estimate for Green's function. Motivated by their results and further applications, in this paper we shall prove an improved (nearly optimal) integral estimate for Green's function under $L^{1+\epsilon}$ volume density condition, and then apply it to obtain improved global geometric estimates. For instance, one of our results states that the $k$th eigenvalue of Laplacian operator $\lambda_k\ge c\cdot k^{\frac{1}{n}}(\log k)^{-3}$, where $n$ is the complex dimension of the K\"ahler manifold and $c$ depends on $n$ and $L^{1+\epsilon}$ norm of the volume density. Also, our results can be applied to the long-time or volume-noncollapsing finite-time K\"ahler-Ricci flow on compact K\"ahler manifolds and to a general K\"ahler family to further extend previous works of Guo-Phong-Song-Sturm, Guedj-T\^o and Vu.
\end{abstract}

\maketitle
\setcounter{tocdepth}{1}
\tableofcontents
\section{Introduction}
\subsection{Backgrounds and motivations}
In Riemannian geometry, there are classical uniform geometric estimates under certain curvature conditions, particularly Ricci curvature lower bound, which play a fundamental role in the study of geometric and topological structure of Riemannian manifolds in the past decades. As one of the most fundamental ones, Bonnet-Myers theorem bounds the diameter from above in terms of a positive lower bound of the Ricci curvature. 

In the case of compact K\"ahler manifolds, as motivated by several important questions, a fundamental problem is to understand uniform geometric properties of a degenerate family of K\"ahler metrics with or without a uniform Ricci curvature lower bound, on which significant propresses have been obtained in recent years. The diameter bound depending on a Ricci curvature lower bound and $L^\infty$ norm of the K\"ahler potential has been built in Fu-Guo-Song \cite{FGS} and Guedj-Guenancia-Zeriahi \cite{GGZ}; more related works can be found in \cite{GS,LTZ,STZ,TWY,Zys,ZZ} and references therein. In the case of no Ricci curvature lower bound being assumed, thanks to Y. Li \cite{Li} one can bound the H\"older norm of the distance function in terms of the H\"older norm of the K\"ahler potential, which in turn can be bounded in terms of $L^{1+\epsilon}$ norm of the volume density and certain background geometric bounds, according to H\"older norm estimates for the K\"ahler potential in \cite{K08,D-etc}. Recent remarkable works of Guo-Phong-Song-Sturm \cite{GPSt,GPSS1,GPSS2,GPSS3} established a variety of uniform global  geometric estimates (including diameter) for compact K\"ahler manifolds (with possibly specific singularities), which do not depend on any curvature condition but depend on certain entropy upper bound and Tian's $\alpha$-invariant, and have been applied to achieve a series of striking geometric consequences. For convenience and reference, we summarize part of results in \cite{GPSt,GPSS1,GPSS2,GPSS3} as follows. First fix some notations. Throughout this paper, let $(X,\omega_X)$ be an $n$-dimensional compact K\"ahler manifold with $n\ge2$. For a K\"ahler metric $\omega$ on $X$, let $G_\omega$ and $H_\omega$ be Green's function and heat kernel of $(X,\omega)$ defined by
$$\Delta_\omega G_\omega(x,\cdot)=-\delta_x(\cdot)+V_\omega^{-1}\,\,\,\mathrm{and}\,\,\,\int_XG_\omega(x,\cdot) \omega^n=0,$$
and 
\begin{equation}
    \frac{\partial}{\partial t} H(x,y,t)=\Delta_{\omega,y}H(x,y,t)\,\,\, \mathrm{and}\,\,\, \lim_{t\to 0^+} H(x,y,t)=\delta_x(y)\nonumber,
\end{equation} 
respectively, where $V_\omega:=Vol(X,\omega)$, $\Delta_\omega$ is the Laplacian operator associated to $\omega$ and $x,y\in X$; write 
$$0=\lambda_0<\lambda_1\le\lambda_2\le\cdots$$
for the increasing sequence of eigenvalues of $-\Delta_\omega$ repeated according to their multiplicity, and then correspondingly choose eigenfunctions $\phi_k$ with eigenvalue $\lambda_k$ such that $\{\phi_k\}$ are orthonormal in $L^2(X,\omega^n)$. We set $f_\omega:=\frac{\omega^n/V_\omega}{\omega_X^n/V_{\omega_X}}$,
\begin{theorem}\cite{GPSt,GPSS1,GPSS2,GPSS3} \label{thmgpss}
Given positive numbers $A,K,p$ with $p>1$. For any K\"ahler metric $\omega$ on $X$ with $\int_X\omega\wedge\left(V^{-\frac{1}{n}}_{\omega_X}\omega_X\right)^{n-1}\le A$ and $|f_\omega|_{L^p(X,\frac{\omega_X^n}{V_{\omega_X}})}\le K$, there hold
\begin{itemize}
\item[(0)] for any $x\in X$,
$$-V_\omega\inf_{x,y\in X}G_\omega(x,y)+\int_X|G_\omega(x,\cdot)|\omega^n\le C(n,A,K,p).$$

\item[(1)] for any given $\epsilon\in(0,\frac{n}{n-1})$ and any $x\in X$, 
$$V_\omega^{\frac{n}{n-1}-\epsilon}\int_X|G_\omega(x,\cdot)|^{\frac{n}{n-1}-\epsilon}\frac{\omega^n}{V_\omega}\le C(n,A,K,p,\epsilon).$$

\item[(2)] for any given $\epsilon\in(0,\frac{2n}{2n-1})$ and any $x\in X$,
$$V_\omega^{\frac{2n}{2n-1}-\epsilon}\int_X|\nabla_\omega G_\omega(x,\cdot)|^{\frac{2n}{2n-1}-\epsilon}\frac{\omega^n}{V_\omega}\le C(n,A,K,p,\epsilon).$$

\item[(3)] for any given $\epsilon\in(0,\infty)$ and any $x\in X$, $R\in(0,1]$,
$$\frac{Vol_\omega(B_\omega(x,R))}{V_\omega}\ge c(n,A,K,p,\epsilon)R^{2n+\epsilon}.$$

\item[(4)] for any given $\epsilon\in(0,\frac{n}{n-1})$ and any $u\in W^{1,2}(X)$ (set $\bar u:=\int_Xu\frac{\omega^n}{V_\omega}$),
$$\left(\int_X|u-\bar u|^{2(\frac{n}{n-1}-\epsilon)}\frac{\omega^n}{V_\omega}\right)^{\frac{1}{\frac{n}{n-1}-\epsilon}}\le C(n,A,K,p,\epsilon)\int_X|\nabla u|^2\frac{\omega^n}{V_\omega}.$$

\item[(5)] for any given $\epsilon\in(0,\infty)$ and any $t>0$, $x\in X$,
$$H_\omega(x,x,t)\le\frac{1}{V_\omega}+\frac{C(n,A,K,p,\epsilon)}{V_\omega t^{n+\epsilon}}.$$

\item[(6)] for any given $\epsilon\in(0,\frac1n)$ and any $k\ge1$, 
$$\lambda_k\ge c(n,A,K,p,\epsilon) k^{\frac1n-\epsilon}.$$

\item[(7)] for any given $\epsilon\in(0,+\infty)$ and any $k\ge1$, 
$$|\phi_k|_{L^\infty(X)}^2\le \frac{C(n,A,K,p,\epsilon)}{V_\omega} \lambda_k^{n+\epsilon}.$$
\end{itemize}
\end{theorem}
We remark that Guo-Phong-Song-Sturm actually proved these results with corresponding exponents under the more general $L^1(\log L)^{q>n}$ volume density condition, and items (1)-(4) can also be found in Guedj-T\^o \cite{GT}, Vu \cite{V} and Nguyen-Vu \cite{NV}. It's clear that item (3) implies a diameter bound for $(X,\omega)$ and then precompactness of K\"ahler manifolds satisfying the condition in Theorem \ref{thmgpss}. Moreover, it's also clear from the arguments in the above-mentioned works that the dependence on the upper bound of $\int_X\omega\wedge(V^{-\frac{1}{n}}_{\omega_X}\omega_X)^{n-1}$ can be replaced by dependence on two positive numbers $\alpha$ and $A_\alpha$ satisfying $\int_Xe^{-\alpha\psi}\frac{\omega_X^n}{V_{\omega_X}}\le A_\alpha$ for all $\psi\in PSH(X,\chi)$ with $\sup_X\psi=0$, where $\chi$ is a smooth (not necessarily positive) representative of $[
\omega]$.

The primary motivation of this paper is to discuss the possibility of extending the involved exponents in conclusions of Theorem \ref{thmgpss} to optimal ones, which, if hold, should have important geometric consequences. In particular, the volume noncollapsing estimate in the item (3) with $\epsilon=0$ shall be essential in understanding the metric structure of limit spaces of comapct K\"ahler  manifolds under conditions in Theorem \ref{thmgpss}, while the (higher) eigenvalue estimate in the item (6) with $\epsilon=0$ will match the fundamental Weyl's asymptotic formula on eigenvalues in the order of $k$. Actually, if we allow the uniform constants to additionally depend on a lower bound of Ricci curvature, then the items (3-6) with $\epsilon=0$ therein in Theorem \ref{thmgpss} keep true uniformly and they are optimal (concerning those exponents), see \cite{P.Li} for these classical results.  In the present setting of no Ricci curvature lower bound being required, to obtain improved/optimal geometric estimates, a natural idea is to seek improved/optimal uniform integral estimates for Green's function, since, according to Guo-Phong-Song-Sturm's works, all the items (2)-(7) are relied on item (1) in an essential way. However, simply using the asymptotic of $G_\omega(x,y)$ near $x=y$, we know $|G_\omega(x,\cdot)|^{\frac{n}{n-1}}$ is no longer integrable on $(X,\omega)$, and, for an increasing continuous function $h:\mathbb R_+\to\mathbb R_+$, if we set $\bar h(v):=vh^{\frac1n}(v)$, then the function
$$\frac{|G_\omega(x,\cdot)|^{\frac{n}{n-1}}}{\overline h(\log(|G_\omega(x,\cdot)|+3))}$$ 
is integrable on $(X,\omega)$ if and only if $\int_1^\infty\frac{dv}{\overline h(v)}<+\infty$. Therefore, simultaneously considering the scaling invariance, the possibly optimal uniform integral bound for Green's function should be a uniform bound on 
$$V_{\omega}^{\frac{n}{n-1}}\int_X\frac{|G_{\omega}(x,\cdot)|^{\frac{n}{n-1}}}{\overline h(\log(V_{\omega} |G_{\omega}(x,\cdot)|+C))}\frac{\omega^n}{V_\omega},$$
where $C$ is a positive number, $h:\mathbb R_+\to\mathbb R_+$ is an increasing continuous function and $\bar h(v):=vh^{\frac1n}(v)$, satisfying $\int_1^\infty\frac{dv}{\overline{h}(v)}<+\infty$;  further, given Theorem \ref{thmgpss} (0), bounding this integral is equivalent to bounding
$$V_{\omega}^{\frac{n}{n-1}}\int_X\frac{\mathcal G_{\omega}(x,\cdot)^{\frac{n}{n-1}}}{\overline h(\log(V_{\omega} \mathcal G_{\omega}(x,\cdot)+1))}\frac{\omega^n}{V_\omega},$$
where 
\begin{equation}\label{mathcalg}
\mathcal G_{\omega}(x,\cdot):=G_{\omega}(x,\cdot)-\inf_{x,y\in X}G_{\omega}(x,y)+2V_{\omega}^{-1}
\end{equation}
is a positive Green's function. For convenience, we will work with $\mathcal G_{\omega}$.

\subsection{Main results}
We now state our main result on integral bound for Green's function, which is a nearly optimal version according to the above point of view.

\begin{theorem}\label{thmgreen}
Given positive numbers $\alpha,A,K,p$ with $p>1$ and a Lipschitz increasing function $h:\mathbb R_{+}\to\mathbb R_{+}$ with 
\begin{equation}\label{h}\int_{1}^\infty\frac{dv}{\overline h(v)}<+\infty, \,\,\,where \,\,\,\bar h(v):=vh^{\frac1n}(v),
\end{equation} 
there is a positive number $C(n,\alpha,A,K,p,h)$ satisfying the following. For any K\"ahler metric $\omega$ on $X$ with $|f_\omega|_{L^p(X,\frac{\omega_X^n}{V_{\omega_X}})}\le K$ and, for some smooth real $(1,1)$-form $\chi\in[\omega]$, $\int_Xe^{-\alpha\psi}\frac{\omega_X^n}{V_{\omega_X}}\le A$ for all $\psi\in PSH(X,\chi)$ with $\sup_X\psi=0$, there hold, for any $x\in X$,
\begin{itemize}
\item[(i)]
$$V_{\omega}^{\frac{n}{n-1}}\int_X\frac{\mathcal G_{\omega}(x,\cdot)^{\frac{n}{n-1}}}{\overline h^{\frac{n}{n-1}}(\log(V_{\omega} \mathcal G_{\omega}(x,\cdot)+1))}\frac{\omega^n}{V_\omega}\le C(n,\alpha,A,K,p,h);$$
\item[(ii)]
$$V_{\omega}^{\frac{2n}{2n-1}}\int_X\frac{|\nabla \mathcal G_{\omega}(x,\cdot)|^{\frac{2n}{2n-1}}}{\overline h^{\frac{2n}{2n-1}}(\log(V_{\omega} \mathcal G_{\omega}(x,\cdot)+1))}\frac{\omega^n}{V_\omega}\le C(n,\alpha,A,K,p,h),$$
here and hencefore, we take $\nabla$ and $|\cdot|$ with respect to $\omega$.
\end{itemize}

\end{theorem}

In our proof of Theorem \ref{thmgreen}, the fundamental role is played by the method of auxiliary complex Monge-Amp\`ere equation recently developed by Guo-Phong-Song-Sturm \cite{GPSS1,GPSt} (also see \cite{GT}). Comparing with the argument of Guo-Phong-Song-Sturm \cite{GPSS1,GPSt}, however, in our case we have to use a modified auxiliary complex Monge-Amp\`ere equation to handle the issue of the involved power of Green's function exactly being $\frac{n}{n-1}$ (in previous works \cite{GT,GPSS1,GPSt}, this power is strictly less than $\frac{n}{n-1}$), which may be regarded as in the critical state since $\mathcal G(x,\cdot)^{\frac{n}{n-1}}$ is no longer integrable. By using this modified auxiliary complex Monge-Amp\`ere equation, our argument will be somehow more direct without using another auxiliary Laplacian equation nor iteration argument; moreover, since the involved function $h$ in the above theorem is an abstract one, which of course will certainly appear in the auxiliary complex Monge-Amp\`ere equation, we need to bound the entropy of auxiliary K\"ahler metrics with respect to a suitable abstract norm function and subtly construct a suitable abstract increasing function to apply Young's inequality, see Section \ref{sect-green-pf} for details. Our method can be applied to prove an analogous result under the more general $L^1(\log L)^q$ volume density condition, see Theorem \ref{thmgreen-q}.

As applications of Theorem \ref{thmgreen}, we shall prove the following geometric estimates, extending items (3-7) in Theorem \ref{thmgpss}.

\begin{theorem}\label{thmgeom}
Given positive numbers $\alpha,A,K,p,\epsilon$ with $p>1$, there are positive numbers $C=C(n,\alpha,A,K,p,\epsilon)\ge1$ and $c=c(n,\alpha,A,K,p,\epsilon)\le1$ satisfying the following. For any K\"ahler metric $\omega$ on $X$ with $|f_\omega|_{L^p(X,\frac{\omega_X^n}{V_{\omega_X}})}\le K$ and, for some smooth real $(1,1)$-form $\chi\in[\omega]$, $\int_Xe^{-\alpha\psi}\frac{\omega_X^n}{V_{\omega_X}}\le A$ for all $\psi\in PSH(X,\chi)$ with $\sup_X\psi=0$, there hold
\begin{itemize}

\item[(a)] (Local volume non-collapsing) For any $x\in X$ and $R\in(0,1]$,
$$\frac{Vol_\omega(B_\omega(x,R))}{V_\omega}\ge \frac{c\,R^{2n}}{(1-\log R)^{2n+\epsilon}}.$$

\item[(b)] (Sobolev-type inequality) For any $u\in W^{1,2}(X)$ (set $\bar u:=\int_Xu\frac{\omega^n}{V_\omega}$),
$$\int_XN\left(\frac{c\,|u-\bar u|^{2}}{\int_X|\nabla u|^2\frac{\omega^n}{V_\omega}}\right)\frac{\omega^n}{V_\omega}\le C,$$
where $N:\mathbb R_+\to\mathbb R_+,\,\,N(v)=\frac{v^{\frac{n}{n-1}}}{\log^{\frac{2n}{n-1}+\epsilon}(v+3)}$.

\item[(c)] (On diagonal estimate for heat kernel) For any $x\in X$ and $t>0$, 
$$H_\omega(x,x,t)\le\frac{1}{V_\omega}+\frac{C}{V_\omega}\frac{\log^{2n+\epsilon}(3+\frac1t)}{t^{n}}.$$

\item[(d)] (Growth estimate for higher eigenvalue) For any $k\ge1$, 
$$\lambda_k\ge \frac{c \,k^{\frac1n}}{\log^{2+\epsilon}(k+3)}.$$

\item[(e)] (Eigenfunction estimate) For any $k\ge1$, 
$$|\phi_k|_{L^\infty(X)}^2\le \frac{C}{V_\omega} \lambda_k^{n}\log^{2n+\epsilon}(\lambda_k+3).$$
\end{itemize}
\end{theorem}

The above item (a) is proved using Theorem \ref{thmgreen} (ii); we can actually prove a slightly stronger version of local volume non-collapsing estimate, see Proposition \ref{prop-vol-explicit}. We mention that one may recover the Sobolev inequality in Theorem \ref{thmgpss} (4) from the above item (b), since $N(v)\ge c(n,\epsilon)v^{\frac{n}{n-1}-\epsilon}$ for any $v\ge1$. Actually, we will prove a general family of Sobolev-type inequalities containing item (b) as a particular piece, see Theorem \ref{Sobolev1}.

\begin{rem}\label{rem-scale}
One can easily have a scale-invariant version of the above geometric estimates. For example, if we set, for a K\"ahler class $[\omega]$, 
$$\alpha_{[\omega]}:=\sup\left\{\beta>0|\exists\,\,\mathrm{a}\,\,\mathrm{smooth}\, \chi\in[\omega] \,\,\mathrm{s.t.} \sup\left\{\int_Xe^{-\beta\psi}\frac{\omega_X^n}{V_{\omega_X}}|\psi\in\mathrm{PSH}(X,\chi), \sup_X\psi=0\right\}<\infty\right\},$$
then by a simple rescaling method (precisely, by considering the rescaled metric $\alpha_{[\omega]}\omega$), for positive numbers $A,K,p,\epsilon$ with $p>1$ there are positive numbers $C=C(n,A,K,p,\epsilon)$ and $c=c(n,A,K,p,\epsilon)$ satisfying the following: for any K\"ahler metric $\omega$ on $X$ with $|f_\omega|_{L^p(X,\frac{\omega_X^n}{V_{\omega_X}})}\le K$ and, for some smooth real $(1,1)$-form $\chi\in[\omega]$, $\int_Xe^{-\frac{\alpha_{[\omega]}}{2}\psi}\frac{\omega_X^n}{V_{\omega_X}}\le A$ for all $\psi\in PSH(X,\chi)$ with $\sup_X\psi=0$, there hold
\begin{itemize}

\item[(b')] (Sobolev-type inequality) For any $u\in W^{1,2}(X)$ (set $\bar u:=\int_Xu\frac{\omega^n}{V_\omega}$),
$$\int_XN\left(\frac{c\,|u-\bar u|^{2}}{\alpha_{[\omega]}^{-1}\int_X|\nabla u|^2\frac{\omega^n}{V_\omega}}\right)\frac{\omega^n}{V_\omega}\le C,$$
where $N:\mathbb R_+\to\mathbb R_+,\,\,N(v)=\frac{v^{\frac{n}{n-1}}}{\log^{\frac{2n}{n-1}+\epsilon}(v+3)}$.

\item[(d')] (Growth estimate for higher eigenvalue) For any $k\ge1$, 
$$\lambda_k\ge \frac{c\,\alpha_{[\omega]} \,k^{\frac1n}}{\log^{2+\epsilon}(k+3)}.$$

\item[(e')] (Eigenfunction estimate) For any $k\ge1$, 
$$|\phi_k|_{L^\infty(X)}^2\le \frac{C}{V_\omega} (\alpha_{[\omega]}^{-1}\lambda_k)^{n}\log^{2n+\epsilon}(\alpha_{[\omega]}^{-1}\lambda_k+3).$$
\end{itemize}
\end{rem}

\begin{rem}
As in \cite{GPSS1,GPSS2}, geometric estimates in Theorems \ref{thmgreen} and \ref{thmgeom}  stated under $L^p$ volume density condition have their analog under $L^1(\log L)^{n+\epsilon}$ volume density condition, see Sections \ref{sect-green-pf}, \ref{sect-nabla} and \ref{sect-volume}.
\end{rem}

\subsection{Applications}
We now introduce two important geometric settings where our main results can be applied.
\subsubsection{K\"ahler-Ricci flow}
In the study of the Ricci flow, one of the most fundamental tools is the local volume noncollapsing, which is the key to applying convergence theory of Riemannian geometry. Specializing to the compact K\"ahler case, recent years have seen a series of significant progresses on the local volume noncollasping of the long-time solutions to the K\"ahler-Ricci flow, solving fundamental problems of diameter bound and Gromov-Hausdorff compactness.
As an immediate application of our main results, we shall provide an improved local volume noncollapsing property for the long-time K\"ahler-Ricci flow. Let $(X,\omega_X)$ be an $n$-dimensional minimal K\"ahler manifold (recall that a minimal K\"ahler manifold means a compact K\"ahler manifold of nef canonical line bundle $K_X$, i.e. $c_1(K_X)$ is in the closure of the K\"ahler cone of $X$), and consider the K\"ahler-Ricci flow starting from an initial K\"ahler metric $\omega_0$ on $X$,
\begin{equation}\label{KRF1}
    \begin{cases}
       \partial_t\omega(t)=
        -\mathrm{Ric}(\omega(t))-\omega(t),\\
        \omega(0)=\omega_0,
    \end{cases}
\end{equation}
which exists for $t\in[0,\infty)$ by the maximal existence theorem \cite{C,TZ,Ts}. It is a fundamental problem to understand geometric properties of the long-time K\"ahler-Ricci flow on a minimal K\"ahler manifold, which has a natural deep  connection to Abundance Conjecture in algebraic geometry \cite{ST1,ST2,ST3}. Recall that Abundance Conjecture predicts that a minimal projective/K\"ahler manifold $X$ should have a semi-ample canonical line bundle $K_X$. It is a fundamental fact that a semi-ample $K_X$ must be nef. In their pioneered works, Song-Tian \cite{ST1,ST2,ST3} studied the K\"ahler-Ricci flow on a compact K\"ahler manifold of semi-ample canonical line bundle and proved the weak convergence to a generalized K\"ahler-Einstein metric on the canonical model. Song-Tian further conjectured that the convergence should happen in the global sense of Gromov-Hausdorff, and there are important progresses, see \cite{FZz,GSW,HLT,JS,LTo,STZ,TZzl,TZzl1,TWY,W,Zys,ZZ} and references therein. 

Among a variety of ideas and techniques developed in the field, let's focus on local volume noncollapsing of the flow, which plays a central role in recent important progresses. To begin with, we state the one obtained by Jian-Song \cite[Corollary 1.1]{JS} in the semi-ample canonical line bundle case: let $\omega(t)$ be a long-time solution to the K\"ahler-Ricci flow \eqref{KRF1} on a compact K\"ahler manifold of semi-ample canonical line bundle, then there is a positive number $c$ such that for any $t\in[0,\infty)$, $x\in X$ and $R\in(0,diam(X,\omega(t))]$,
\begin{equation}\label{vol-2n}
\frac{Vol_{\omega(t)}(B_{\omega(t)}(x,R))}{Vol_{\omega(t)}(X)}\ge cR^{2n}.
\end{equation}

In general, it is conjectured that the long-time K\"ahler-Ricci flow on a minimal K\"ahler manifold should Gromov-Hausdorff converge to a unique compact metric space induced by a singular metric of K\"ahler-Einstein type \cite{ST3}. Very recently, by developing a striking volume noncollapsing estimate, it has been proved that the long-time K\"ahler-Ricci flow on a minimal K\"ahler manifold subsequently  converges to a compact metric space in Gromov-Hausdorff topology \cite{GPSS1,GPSS2,GT,V}. Precisely, for a long-time solution $\omega(t)$ to the K\"ahler-Ricci flow \eqref{KRF1} on a minimal K\"ahler manifold and any $\epsilon>0$, there is a positive number $c_\epsilon$ such that for any $t\in [0,\infty)$, $x\in X$ and $R\in(0,1]$,
\begin{equation}\label{vol-2n+}
\frac{Vol_{\omega(t)}(B_{\omega(t)}(x,R))}{Vol_{\omega(t)}(X)}\ge c_\epsilon R^{2n+\epsilon}.
\end{equation}
This is first proved by Guo-Phong-Song-Sturm \cite{GPSS1} for minimal K\"ahler manifolds of nonnegative Kodaira dimension, and then extended to general minimal K\"ahler manifolds by Vu \cite{V}, Guedj-T\^o \cite{GT} and Guo-Phong-Song-Sturm \cite{GPSS2}. 

Motivated by the Abundance Conjecture, it is natural to expect that for a long-time solution $\omega(t)$ to the K\"ahler-Ricci flow \eqref{KRF1} on a minimal K\"ahler manifold, the volume noncollapsing estimate \eqref{vol-2n+} can be further improved to \eqref{vol-2n}, which, if holds, provides positive evidence for Abundance Conjecture and may have important applications in studying limit spaces of the flow. This actually serves as another motivation for this work. As an application of the main result, we will show the following version of volume noncollapsing estimate, which improves \eqref{vol-2n+} and could be seen as a partial result for this expectation.

\begin{theorem}\label{thm-infinite}
Let $(X,\omega_X)$ be an $n$-dimensional minimal K\"ahler manifold and $\omega(t)$, $t\in[0,\infty)$, the long-time solution to the K\"ahler-Ricci flow \eqref{KRF1} starting from an initial K\"ahler metric $\omega_0$. Then for any $\epsilon>0$, there exists a positive number $\tilde c=\tilde c(X,\omega_0,\epsilon)$ such that for any $t\in[0,\infty)$, $x\in X$ and $R\in(0,1]$, there holds
\begin{equation}\label{vol-est-1}
\frac{Vol_{\omega(t)}(B_{\omega(t)}(x,R))}{Vol_{\omega(t)}(X)}\ge \frac{\tilde cR^{2n}}{(-\log R+1)^{2n+\epsilon}}.
\end{equation}
\end{theorem}
Similar result holds for a finite-time solution to the (unnormalized) K\"ahler-Ricci flow, assuming the (global) volume is noncollapsing.

\begin{theorem}\label{thm-finite}
Let $(X,\omega_X)$ be an $n$-dimensional compact K\"ahler manifold and $\omega(t)$, $t\in[0,T)$, be a finite-time solution to the (unnormalized) K\"ahler-Ricci flow starting from an initial K\"ahler metric $\omega_0$, 
\begin{equation}\label{KRF2}
    \begin{cases}
        \partial_t\omega(t)=
        -\mathrm{Ric}(\omega(t)),\\
        \omega(0)=\omega_0.
    \end{cases}
\end{equation}
Assume $\lim _{t\to T}Vol_{\omega(t)}(X)>0$. Then for $\epsilon>0$, there exists a positive number $\tilde c=\tilde c(X,\omega_0,\epsilon)$ such that for any $t\in[0,\infty)$, $x\in X$ and $R\in(0,1]$, there holds
\begin{equation}\label{vol-est-finite-1}
Vol_{\omega(t)}(B_{\omega(t)}(x,R))\ge \frac{\tilde cR^{2n}}{(-\log R+1)^{2n+\epsilon}}.
\end{equation}
\end{theorem}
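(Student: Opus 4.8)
The plan is to reduce Theorem~\ref{thm-finite} to Theorem~\ref{thm-infinite} by the standard change of variables relating the unnormalized flow \eqref{KRF2} to the normalized flow \eqref{KRF1}. First I would record that if $\omega(t)$, $t\in[0,T)$, solves \eqref{KRF2}, then setting $s=s(t)=-\log(1-\mu t)/\mu$ for a suitable constant $\mu>0$ (chosen so that the rescaled cohomology class stays inside the K\"ahler cone as long as the original flow exists) and $\tilde\omega(s)=(1-\mu t)^{-1}\omega(t)$ produces a solution to the normalized flow \eqref{KRF1} on $s\in[0,S)$; the hypothesis $\lim_{t\to T}\mathrm{Vol}_{\omega(t)}(X)>0$ is exactly what forces $S=\infty$, i.e.\ the rescaled (normalized) flow is long-time. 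I would spell out the elementary relations $B_{\omega(t)}(x,R)=B_{\tilde\omega(s)}(x,R')$ with $R'=(1-\mu t)^{-1/2}R$, and $\mathrm{Vol}_{\omega(t)}(E)=(1-\mu t)^{n}\,\mathrm{Vol}_{\tilde\omega(s)}(E)$ for any Borel set $E$, together with $\mathrm{Vol}_{\tilde\omega(s)}(X)\le C$ uniformly in $s$ (again from the volume lower bound along the original flow, since $\mathrm{Vol}$ evolves monotonically in a controlled way under \eqref{KRF2}).

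Next I would apply Theorem~\ref{thm-infinite} to the normalized flow $\tilde\omega(s)$ at the given parameter: for the chosen $k\ge2$ and $r>n$ there is $c_0=c_0(X,\omega_0,k,r)>0$ with
\begin{equation}
\frac{\mathrm{Vol}_{\tilde\omega(s)}(B_{\tilde\omega(s)}(x,\rho))}{\mathrm{Vol}_{\tilde\omega(s)}(X)}\ge\frac{c_0\,\rho^{2n}}{\Gamma_{k,r}(-\log\rho+1)}
\end{equation}
for all $s\in[0,\infty)$, $x\in X$ and $\rho\in(0,1]$. Substituting $\rho=R'=(1-\mu t)^{-1/2}R$ and translating back through the volume/ball identities above, the factors $(1-\mu t)^{\pm n}$ that come from rescaling the metric cancel against one another, and one is left with
\begin{equation}
\mathrm{Vol}_{\omega(t)}(B_{\omega(t)}(x,R))\ge\frac{c_0\,\mathrm{Vol}_{\tilde\omega(s)}(X)\,(1-\mu t)^{n}\,R'^{2n}}{\Gamma_{k,r}(-\log R'+1)}=\frac{c_0\,\mathrm{Vol}_{\tilde\omega(s)}(X)\,R^{2n}}{\Gamma_{k,r}(-\log R'+1)},
\end{equation}
so the only remaining point is to compare the argument of $\Gamma_{k,r}$.

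Since $0<1-\mu t<1$ on $[0,T)$ (one may absorb the case $1-\mu t\ge 1$ trivially), we have $R'\le R$, hence $-\log R'+1\ge -\log R+1\ge1$, but also $R'\ge R$ is false in general; what is true and sufficient is the one-sided bound $R'=(1-\mu t)^{-1/2}R\le (1-\mu T)^{-1/2}R$ together with the fact that $\Gamma_{k,r}$ is increasing, which would push the estimate the wrong way. So instead I would use that $\Gamma_{k,r}(v)$ grows only polynomially-times-iterated-logarithmically and that $v\mapsto \Gamma_{k,r}(v+a)/\Gamma_{k,r}(v)$ is bounded on $v\ge1$ for each fixed $a\ge0$: writing $-\log R'+1=(-\log R+1)+\tfrac12\log\frac{1}{1-\mu t}\le (-\log R+1)+a_0$ with $a_0:=\tfrac12\log\frac{1}{1-\mu T}<\infty$ (finite precisely because $T<\infty$), one gets $\Gamma_{k,r}(-\log R'+1)\le C_1\,\Gamma_{k,r}(-\log R+1)$ with $C_1=C_1(k,r,T)$. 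Combining with the uniform upper bound on $\mathrm{Vol}_{\tilde\omega(s)}(X)$ and on $(1-\mu t)^{n}$ only from below (we need it bounded below away from $0$, which again uses $T<\infty$), we arrive at \eqref{vol-est-finite-1} with $c=c(X,\omega_0,k,r)$, and the particular case with $(-\log R+1)^{N}$ follows exactly as in Theorem~\ref{thm-infinite} by choosing $k$ and $r$ so that $\Gamma_{k,r}(v)\le v^{N}$ for $v\ge1$. The only place that requires genuine care, rather than bookkeeping, is verifying that the volume hypothesis $\lim_{t\to T}\mathrm{Vol}_{\omega(t)}(X)>0$ indeed forces the normalized flow to be immortal and keeps all the rescaling constants $(1-\mu t)^{\pm1}$, $\mathrm{Vol}_{\tilde\omega(s)}(X)$, and $a_0$ uniformly finite; this is where one must invoke the cohomological description of the flow (the class $[\omega(t)]=[\omega_0]-t\,c_1(K_X)$ staying K\"ahler up to time $T$) and the relation between $T$ and the volume of the limiting class. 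I expect this step to be the main obstacle, and everything after it to be routine rescaling.
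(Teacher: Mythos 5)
Your reduction to Theorem~\ref{thm-infinite} does not work, and the obstacle you flagged at the end as ``the main obstacle'' is in fact fatal rather than routine. First, the algebra of the change of variables: to turn \eqref{KRF2} into \eqref{KRF1} you must take $\mu=-1$ in your ansatz, i.e.\ $s=\log(1+t)$ and $\tilde\omega(s)=(1+t)^{-1}\omega(t)$ (any other $\mu$ produces $\partial_s\tilde\omega=-\mathrm{Ric}(\tilde\omega)+\mu\tilde\omega$, not \eqref{KRF1}). With this substitution the normalized flow lives on $s\in[0,\log(1+T))$, a \emph{finite} interval whenever $T<\infty$, and $[\tilde\omega(s)]=e^{-s}[\omega_0]+(1-e^{-s})2\pi c_1(K_X)$ ceases to be K\"ahler exactly at $s=\log(1+T)$. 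The hypothesis $\lim_{t\to T}\mathrm{Vol}_{\omega(t)}(X)>0$ has no bearing on this: maximal existence times are cohomological, not volumetric. Second, and more fundamentally, Theorem~\ref{thm-infinite} requires $X$ to be minimal ($K_X$ nef), whereas a finite-time singularity of \eqref{KRF2} occurs precisely when $[\omega_0]+2\pi t\,c_1(K_X)$ leaves the K\"ahler cone, which forces $c_1(K_X)$ to be non-nef. So the manifolds covered by Theorem~\ref{thm-finite} with $T<\infty$ are exactly those to which Theorem~\ref{thm-infinite} never applies; no rescaling can bridge this.

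The correct route is to bypass Theorem~\ref{thm-infinite} entirely and apply the general family estimate, Proposition~\ref{prop-vol-explicit}, directly to the unnormalized family $\{\omega(t)\}_{t\in[0,T)}$. One checks the three hypotheses: (a) $[\omega(t)]=[\omega_0]+2\pi t\,c_1(K_X)\le A[\omega_0]$ uniformly since $t<T<\infty$; (b) the relative volume density $f_{\omega(t)}=\frac{\omega(t)^n/V_{\omega(t)}}{\omega_0^n/V_{\omega_0}}$ is uniformly bounded in $L^\infty$ by the maximum-principle estimate of Song--Weinkove (\cite[Lemma 7.1]{SW}), which gives the $L^{p}$ condition of Case $\textbf{I}$; (c) $V_{\omega(t)}$ is bounded away from $0$ and $\infty$ by the noncollapsing assumption together with continuity of $t\mapsto\int_X([\omega_0]+2\pi t\,c_1(K_X))^n$. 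Proposition~\ref{prop-vol-explicit} then yields the bound for $\underline V_{\omega(t)}(x,R)$, and multiplying by the uniform lower bound on $V_{\omega(t)}$ gives \eqref{vol-est-finite-1}. Your final observation that $\Gamma_{k,r}(v)\le Cv^N$ for $N>2n$ is fine, but it cannot rescue the earlier step.
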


\subsubsection{K\"ahler family}\label{subsect-family} In this subsection, we consider the setting of K\"ahler family studied in \cite{DGG,GT}. Let $\mathcal X$ be an irreducible and reduced complex space and $\pi:\mathcal X\to2\mathbb D$ a proper holomorphic surjection with connected fibers such that for each $s\in2\mathbb D^*$, the fiber $X_s:=\pi^{-1}(s)$ over $s$ is an $n$-dimensional compact K\"ahler manifold, and $X_0:=\pi^{-1}(0)$ is an irreducible and reduced complex space. A smooth form $\beta$ on $\mathcal X$ is called a relative K\"ahler form on $\mathcal X$ if the restriction $\beta_s:=\beta|_{X_s}$ is a K\"ahler form on $X_s$ for each $s\in2\mathbb D^*$. In the followings we always assume that $V_{\beta_s}:=Vol_{\beta_s}(X_s)$ is uniformly bounded away from $0$ and $+\infty$ for $s\in2\mathbb D^*$. Applying the main result to the K\"ahler family, we have the following extension of \cite[Theorem]{GT}.

\begin{theorem}\label{thmgreen-family}
Let $\mathcal X\to 2\mathbb D$ be a K\"ahler family and $\beta$ a relative K\"ahler form on $\mathcal X$ as defined above. For positive numbers $A,B,p$ with $p>1$ and a Lipschitz increasing function $h:\mathbb R_{+}\to\mathbb R_{+}$ with
\begin{equation}\int_{1}^\infty\frac{dv}{\overline h(v)}<+\infty, \,\,\,where \,\,\,\bar h(v):=vh^{\frac1n}(v),\nonumber
\end{equation}  
there is a positive number $C=C(n,p,A,B,h)$ satisfying the following. If $\omega$ is a relative K\"ahler form on $\mathcal X$ such that for each $s\in2\mathbb D^*$, 
$$[\omega_s]\le A[\beta_s]\,\,\, in\,\,\, H^{1,1}(X_s,\mathbb R)\,\,\, and \,\,\,\int_{X_s}f_s^p\frac{\beta_s^n}{V_{\beta_s}}\le B,$$
then for each $s\in\mathbb D^*$, $x\in X_s$, there hold
\begin{itemize}
\item[(i')]
$$V_{\omega_s}^{\frac{n}{n-1}}\int_{X_s}\frac{\mathcal G_{\omega_s}(x,\cdot)^{\frac{n}{n-1}}}{\bar h^{\frac{n}{n-1}}(\log(V_{\omega_s}\mathcal G_{\omega_s}(x,\cdot)+1))}\frac{\omega_s^n}{V_{\omega_s}}\le C(n,p,A,B,h);$$

\item[(ii')]
$$V_{\omega_s}^{\frac{2n}{2n-1}}\int_{X_s}\frac{|\nabla_{\omega_s}\mathcal G_{\omega_s}(x,\cdot)|^{\frac{2n}{2n-1}}}{\bar h^{\frac{2n}{2n-1}}(\log(V_{\omega_s}\mathcal G_{\omega_s}(x,\cdot)+1))}\frac{\omega_s^n}{V_{\omega_s}}\le C(n,p,A,B,h).$$
\end{itemize}
\end{theorem}
Given these Green's function integral bounds, the uniform geometric estimates listed in Theorem \ref{thmgeom} can also be proved for $\omega_s,$ $s\in\mathbb D^*$. We omit the precise statement here.

\subsection{Organization of the remaining part}
\begin{itemize}
\item In Section \ref{sect-pre}, we will collect necessary preliminaries on uniform estimate on K\"ahler potentials.
\item In Sections  \ref{sect-green-pf} and \ref{sect-nabla}, we prove uniform integral estimates for Green's function and its gradient under $L^p$ conditions stated in Theorem \ref{thmgreen} (i). An analog under  $L^1(\log L)^q$ condition is also proved, see Theorem \ref{thmgreen-q}.

\item In sections \ref{sect-volume}-\ref{sect-eigen}, we prove geometric estimates stated in items (a)-(e) of Theorem \ref{thmgeom}, respectively.

\item In Section \ref{sect-vol-pf} we prove Theorems \ref{thm-infinite} and \ref{thm-finite} on the K\"ahler-Ricci flow. We shall also provide a generalization to a twisted K\"ahler-Ricci flow on non-minimal K\"ahler manifolds, see Theorem \ref{thm-vol-tkrf}.
\item In Section \ref{sect-family}, we present the applications to a general K\"ahler family.
\item In Appendix \ref{app-mto}, we present a Moser-Trudinger-Onofri type inequality in the critical case.
\item In Appendix \ref{app-Linfty}, we present a proof of Theorem \ref{Linfty}.
\end{itemize}

\section{Preliminaries: uniform $L^\infty$ estimate for K\"ahler potentials}\label{sect-pre}
Throughout this paper, a fundation is the uniform $L^\infty$ estimate for K\"ahler potentials, which we collect in this section.

The complex Monge-Amp\`ere equation plays a central role in the study of K\"ahler geometry, in which a fundamental part is the uniform $L^\infty$ estimate on the potential functions. In his celebrated solution to Calabi conjecture, Yau \cite{Y} established the $L^\infty$ estimate for the complex Monge-Amp\`ere equation on an $n$-dimensional compact K\"ahler manifold using Moser iteration, which actually works for the case of right hand side being in $L^p,\,p>n$. Using pluripotential theory, Kolodziej \cite{K} obtained $L^\infty$ estimate for the complex Monge-Amp\`ere equation on an $n$-dimensional compact K\"ahler manifold in terms of certain Orlicz norm of right hand side, containing the case of right hand side being in $L^p,\,p>1$. More developments can be found in \cite{BEGZ,DP,DGG,DZ,EGZ1,EGZ2,FGS,GL,GPT,GPTW,Liuj} etc.. In our case, to get geometric estimates uniform for family of K\"ahler manifolds, the $L^\infty$ estimate has to be of explicit dependence on the background quantities. Precisely, we will need the following version of $L^\infty$ estimate, where (and hencefore) we will use the following notations: let $\omega$ be a K\"ahler metric on an $n$-dimensioanl compact K\"ahler manifold $(X,\omega_X)$ and $\Phi:\mathbb R_+\to\mathbb R_+$ be a continuous function, then 
\begin{itemize}
\item set 
$$f_\omega:=\frac{\omega^n/V_\omega}{\omega_X^n/V_{\omega_X}}\,\,\,\,\,\, and \,\,\,\,\,\,N_\Phi(\omega):=\int_X\Phi(\log( f_\omega+1))f_\omega\frac{\omega_X^n}{V_{\omega_X}};$$
\item call $\Phi$ a \emph{K-function} if $\Phi:\mathbb R_+\to\mathbb R_+$ is a Lipschitz increasing function satisfying $\lim_{v\to+\infty}\Phi(v)=+\infty$ and $\int_{1}^{\infty}\Phi(v)^{-\frac1n}dv=:C_{\Phi}<+\infty$.
\end{itemize} 

\begin{theorem}\label{Linfty}
Let $(X,\omega_X)$ be an $n$-dimensioanl compact K\"ahler manifold, and $\theta$ a closed real $(1,1)$-form representating a K\"ahler class. Let $\omega=\theta+\sqrt{-1}\partial\bar\partial\varphi$ be a K\"ahler metric with $\sup_X\varphi=0$. Assume that
\begin{itemize}
\item[(H1)] there exist two positive numbers $\alpha$ and $A_\alpha$ such that for each $\psi\in PSH(X,\theta)$ there holds
$$\int_{X}e^{\alpha(\sup_X\psi-\psi)}\frac{\omega_X^n}{V_{\omega_X}}\le A_\alpha;$$
\item[(H2)] there exist a K-function $\Phi:\mathbb R_+\to\mathbb R_+$ and a positive number $B_\Phi$ such that
$$N_\Phi(\omega)\le B_\Phi,$$
\end{itemize}
then 
$$|u_\theta-\varphi|_{L^\infty}\le C(n,\alpha,A_\alpha,\Phi,B_\Phi,C_\Phi),$$
where $u_\theta:=\sup\{u\in PSH(X,\theta)|u\le 0\}$ is the envelope of $(X,\theta)$.
\end{theorem}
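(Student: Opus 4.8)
The plan is to follow the now-standard route of Guo--Phong--Song--Sturm and Kolodziej via an auxiliary complex Monge--Amp\`ere equation, but carefully tracking the dependence of all constants on the background data. First I would reduce the estimate $|u_\theta-\varphi|_{L^\infty}\le C$ to a one-sided bound. One direction, $u_\theta\ge\varphi$, is immediate from the definition of the envelope since $\varphi\in PSH(X,\theta)$ and $\sup_X\varphi=0$, so $\varphi$ is one of the competitors in the sup defining $u_\theta$ (hence $u_\theta\ge\varphi$, giving $u_\theta-\varphi\ge 0$). It therefore suffices to prove $u_\theta-\varphi\le C$, equivalently $\varphi\ge u_\theta - C$, i.e. an $L^\infty$ lower bound for $\varphi$ relative to the envelope. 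For this I would use the comparison/stability technique: fix $s\in(0,1)$ (say $s=\tfrac12$ or a value to be optimized), and consider the auxiliary equation $(\theta+\dd\psi_s)^n = e^{\lambda_s(u_\theta-\varphi)}\,\tfrac{(\theta+\dd u_\theta)^n + \text{(suitable reference)}}{c_s}$ or, more precisely, the GPSS-type auxiliary Monge--Amp\`ere equation whose right-hand side is a normalized multiple of $e^{-s(u_\theta-\varphi)}\omega_X^n$; solving this (solvability follows from Kolodziej/Ko\l odziej--Yau-type existence, or from the $L^1(\log L)^{q>n}$ solvability that is implied by the K-function hypothesis on $\Phi$) produces $\psi_s\in PSH(X,\theta)$ with a definite normalization.

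The core of the argument is then the pointwise comparison at a minimum point of $u_\theta-\varphi+\text{(auxiliary potential)}$. Concretely, I would set $w := (1-s)(u_\theta-\varphi) + \psi_s - \text{const}$ or an analogous combination so that $\theta+\dd w$ is a K\"ahler current whose Monge--Amp\`ere measure we can compare: at a point where $-\varphi$ is large, convexity of $t\mapsto t^n$ together with the equation for $\varphi$ and the equation defining $\psi_s$ forces, via the comparison principle (or the maximum principle applied to $w$), an inequality of the form $(-\inf_X(u_\theta-\varphi))^{?}\le C\cdot(\text{data})$. The mechanism is: the degeneracy of $f_\omega$ is controlled in the Orlicz sense by (H2), so $\int_X\Phi(\log(f_\omega+1))f_\omega\,\omega_X^n/V_{\omega_X}\le B_\Phi$; the $\alpha$-invariance hypothesis (H1) gives the uniform integrability $\int_X e^{\alpha(\sup\psi-\psi)}\,\omega_X^n/V_{\omega_X}\le A_\alpha$ for all $\psi\in PSH(X,\theta)$, which is exactly what is needed to run the De Giorgi / Kolodziej iteration (or the Alexandrov--Bakelman--Pucci-style estimate of GPSS) and extract an $L^\infty$ bound from the $L^1$ control of the right-hand side against the Orlicz weight. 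I would carry this out by defining $\phi(\tau) := \tfrac{1}{V_{\omega_X}}\int_{\{u_\theta-\varphi>\tau\}}\omega_X^n$, deriving a differential inequality $\tau - \tau_0 \le C\,\phi(\tau_0)^{1/n}\cdot\Psi(\phi(\tau_0))$ where $\Psi$ encodes the Orlicz norm, and concluding $\phi$ vanishes for $\tau$ beyond an explicit threshold $C(n,\alpha,A_\alpha,\Phi,B_\Phi,C_\Phi)$ using $C_\Phi=\int_1^\infty\Phi(v)^{-1/n}dv<\infty$.

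The main obstacle I anticipate is the bookkeeping of constants: every step — solvability of the auxiliary equation, the volume-bound $c_s^{-1}\le$ something, the comparison inequality, and the final iteration — must be quantified purely in terms of $n$, $\alpha$, $A_\alpha$, the K-function $\Phi$, $B_\Phi$ and $C_\Phi$, with no hidden dependence on $(X,\omega_X)$ or on $\theta$ beyond what (H1) encodes (it is here that the $\alpha$-invariance condition does the work of replacing a Sobolev/Poincar\'e constant). In particular the normalizing constant in the auxiliary equation, $c_s=\int_X e^{-s(u_\theta-\varphi)}\omega_X^n/V_{\omega_X}$, must be bounded below away from zero uniformly; this follows from (H1) applied to $\psi=u_\theta$ together with the already-established $u_\theta\ge\varphi\ge$ something, but the circularity must be broken — I would handle this by first proving the estimate with a constant depending also on an a priori (non-uniform) bound, then bootstrapping, or, cleaner, by running the iteration directly on $\varphi$ using only that $u_\theta\le 0$ (so $u_\theta-\varphi\ge 0$ and $\int_X e^{\alpha(u_\theta-\varphi)}\le\int_X e^{-\alpha\varphi}\le A_\alpha$ by (H1) with $\psi=\varphi$, $\sup\psi=0$). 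A secondary technical point is verifying that the hypothesis that $\Phi$ is a K-function (Lipschitz, increasing, $\Phi\to\infty$, $\int_1^\infty\Phi^{-1/n}<\infty$) is exactly strong enough to close the iteration — this is precisely Kolodziej's borderline integrability condition, and the finiteness of $C_\Phi$ is what prevents the threshold from being infinite.
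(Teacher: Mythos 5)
Your overall architecture (auxiliary Monge--Amp\`ere equation, comparison at an extremum, De Giorgi iteration, with (H1) supplying the uniform exponential integrability that replaces a Sobolev constant and $C_\Phi<\infty$ closing the iteration) is the same family of argument the paper uses, and your reduction to the one-sided bound $u_\theta-\varphi\le C$ and the observation that $\int_Xe^{\alpha(u_\theta-\varphi)}\omega_X^n/V_{\omega_X}\le A_\alpha$ are both correct. But there is a genuine gap at the central step: you never say \emph{how} the weak Orlicz control (H2) on $f_\omega$ is converted into something the pointwise comparison can use. Asserting that the Orlicz bound ``is exactly what is needed to run the De Giorgi iteration'' skips the hardest point. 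Under a general K-function condition the differential inequality $\tau-\tau_0\le C\,\phi(\tau_0)^{1/n}\Psi(\phi(\tau_0))$ for the \emph{unweighted} level-set measure $\phi(\tau)=V_{\omega_X}^{-1}\int_{\{u_\theta-\varphi>\tau\}}\omega_X^n$ does not follow directly from comparing the two Monge--Amp\`ere measures, because at the extremal point you only control $f_\omega$ in an averaged (Orlicz) sense, not pointwise.

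The paper's mechanism, which your sketch is missing, is a \emph{first} auxiliary equation $V_\theta^{-1}(\theta+\dd\psi_\Phi)^n=\Phi(\log(f_\omega+1))f_\omega\,N_\Phi(\omega)^{-1}\,\omega_X^n/V_{\omega_X}$ together with the potential $\Psi_1=u_\beta-\hat\Phi\bigl(-\tfrac12\alpha(\psi_\Phi-u_\beta-1)+\Lambda_1\bigr)$, where $\hat\Phi$ is built from $\int\Phi^{-1/n}$; this yields the pointwise decomposition $f_\omega\,\omega_X^n/V_{\omega_X}\le 2^{-n}V_\theta^{-1}(\theta+\dd\Psi_1)^n+h_{\Phi,\beta}\,\omega_X^n/V_{\omega_X}$ with $h_{\Phi,\beta}$ uniformly in $L^2$ by (H1). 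It is exactly here that $C_\Phi$ enters (via $\sup\hat\Phi\le 4\alpha^{-1}N_\Phi(\omega)^{1/n}C_\Phi$), and the De Giorgi iteration must then be run on the $h_{\Phi,\beta}$-weighted measure of the sublevel sets of $-\varphi+\tfrac12u_\beta+\tfrac12\Psi_1$, not on the plain level sets of $u_\theta-\varphi$. Relatedly, your proposed right-hand side $e^{-s(u_\theta-\varphi)}$ for the second auxiliary equation is not the one that closes in this framework: the GPT-type argument needs the truncated linear weight $\tau_k(-\varphi+\tfrac12u_\beta+\tfrac12\Psi_1-w)\,h_{\Phi,\beta}$, whose normalizing constant $A_{w,k,\beta}$ produces the gain $A_{w,k,\beta}^{1/(n+1)}$ in the comparison function $-\epsilon(-\psi_{w,k,\beta}+u_\beta+1+\Lambda_2)^{n/(n+1)}$ that drives the iteration. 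Without supplying this decomposition (or an equivalent device such as Kolodziej's capacity estimates), the proof as sketched does not go through.
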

This is (essentially) proved in Liu \cite[Section7]{Liuj} basing on the PDE approach developed in a recent breakthrough of Guo-Phong-Tong \cite{GPT} (also see \cite{GPTW}). The $L^\infty$ estimate under the general condition of volume density stated in (H2) goes back to Kolodziej's fundamental work \cite{K}.
Since Theorem \ref{Linfty} is fundamental in this work and its setup and dependence of the required uniform bound are slightly different from \cite{K,GPT,GPTW,Liuj}, for convenience and completeness, in Appendix \ref{app-Linfty} we will present a proof of Theorem \ref{Linfty} by using Guo-Phong-Tong's PDE approach. Actually,  the proof will follow \cite{GPT,GPTW,Liuj} closely, and we just need to make the dependence relation slightly sharper (precisely, get rid of the dependence on the background metric $\omega_X$ and the reference form $\theta$) and chase the involved constants carefully. In particular, as in \cite[Theorem 1.1]{DGG}, the uniform bound $C(n,\alpha,A_\alpha,\Phi,B_\Phi,C_\Phi)$ will be expressed in terms of its parameters rather explicitly, see \eqref{Lambda1}, \eqref{balpha}, \eqref{C0} and \eqref{C}.

\section{Integral estimate on Green's function: proof of Theorem \ref{thmgreen} (i)}\label{sect-green-pf}
This section contains a proof of our new integral estimate on Green's function stated in Theorem \ref{thmgreen} (i). To roughly see the difference between our proof and previous ones, let's summarize the main steps in the proof of Theorem \ref{thmgpss} (1), given in \cite{GPSt,GPSS1,GT,NV}: the first step is to prove a uniform $L^k$ bound, $k<1+\frac1n$, for the Green's function by using an auxiliary complex Monge-Amp\`ere equation and an auxiliary Laplace equation, in which the right hand side of the involved auxiliary complex Monge-Amp\`ere equation has a uniform $L^{p'}$ bound for some $p'<p$; the second step is iterating the first step to obtain the uniform $L^k$ bound for the Green's function, for any $k<\frac{n}{n-1}$. In our proof of Theorem \ref{thmgreen} (i) under the $L^p$ condition, since the Green's function $\mathcal G_\omega$ has the critical power $\frac{n}{n-1}$, we have to work with a modified auxiliary complex Monge-Amp\`ere equation, whose right hand side, however, is never of uniform $L^{1+\epsilon}$ bound but of uniform entropy bound with respect to certain suitably-constructed K-function (this is the reason that the general version of $L^\infty$ estimate in Theorem \ref{Linfty} is necessary for us). Moreover, our argument does not involve the auxiliary Laplace equation nor the iteration method, and hence is somehow more direct. Applying similar idea to the case under $L^1(\log L)^q$ volume density condition provides the following 

\begin{theorem}\label{thmgreen-q}
Given positive numbers $\alpha,A,K,q$ with $q>n$ and a Lipschitz increasing function $h:\mathbb R_{+}\to\mathbb R_{+}$ with $\int_{1}^\infty\frac{dv}{vh^{\frac1n}(v)}<+\infty$ and $\frac{v^{q-n}}{h(v)}$ being increasing for $v\ge1$.
There is a positive number $C(n,\alpha,A,K,q,h)$ satisfying the following. For any K\"ahler metric $\omega$ on $X$ with $|f_\omega|_{L^1(\log L)^q(X,\omega_X)}\le K$ and, for some smooth real $(1,1)$-form $\chi\in[\omega]$, $\int_Xe^{-\alpha\psi}\omega_X^n\le A$ for all $\psi\in PSH(X,\chi)$ with $\sup_X\psi=0$, there hold, any $x\in X$,
\begin{itemize}
\item[(i)]
$$V_{\omega}^{\frac{nq}{nq-q+n}}\int_X\frac{\mathcal G_{\omega}(x,\cdot)^{\frac{nq}{nq-q+n}}}{h^{\frac{q}{nq-q+n}}(\log(V_{\omega} \mathcal G_{\omega}(x,\cdot)+1))}\frac{\omega^n}{V_\omega}\le C(n,\alpha,A,K,q,h);$$
\item[(ii)]
$$V_{\omega}^{\frac{2nq}{2nq-q+n}}\int_X\frac{|\nabla \mathcal G_{\omega}(x,\cdot)|^{\frac{2nq}{2nq-q+n}}}{h^{\frac{2q}{2nq-q+n}}(\log(V_{\omega} \mathcal G_{\omega}(x,\cdot)+1))}\frac{\omega^n}{V_\omega}\le C(n,\alpha,A,K,q,h).$$
\end{itemize}
\end{theorem}

We will handle Theorem \ref{thmgreen} (i) and Theorem \ref{thmgreen-q} (i) simultaneously. To this end, we consider two cases and define necessary notations as follows:
\begin{itemize}
\item[\textbf{Case I}]: Assume setup of Theorem \ref{thmgreen} with the function $h$. Set $k_{\textbf{I}}:=\frac{n}{n-1}$. For $a\ge 1$, define two Lipschitz functions $j_{\textbf{I}}=j_{\textbf{I},h,a}(v):=\bar h(\log(v+a))$ and
$$J_{\textbf{I}}(v)=J_{\textbf{I},h,a}(v):=\frac{v^{k_{\textbf{I}}-1}}{j_{\textbf{I}}^{k_{\textbf{I}}}(v)}=\frac{v^{\frac{1}{n-1}}}{\overline h^{\frac{n}{n-1}}(\log(v+a))},\,\,v\in[1,+\infty).$$
A constant $C_{\textbf{I}}(*)$ stands for a constant depending on $n,\alpha,A,K,p,h$ and $*$.
\item[\textbf{Case II}]: Assume setup of Theorem \ref{thmgreen-q} with the function $h$ satisfying that $\frac{v^{\frac{q-n}{q}}}{h(v)}$ is increasing in $v$. Set $k_{\textbf{II}}:=\frac{nq}{nq-q+n}$. For $a\ge 1$, define two Lipschitz functions $j_{\textbf{II}}(v)=j_{\textbf{II},h,a}(v):=h^{\frac{1}{n}}(v+a)$ and
$$J_{\textbf{II}}(v)=J_{\textbf{II},h,a}(v):=\frac{v^{k_{\textbf{II}}-1}}{j_{\textbf{II}}^{k_{\textbf{II}}}(v)}=\frac{v^{\frac{q-n}{nq-q+n}}}{h^{\frac{q}{nq-q+n}}(v+a)},\,\,v\in[1,+\infty).$$
A constant $C_{\textbf{II}}(*)$ stands for a constant depending on $n,\alpha,A,K,q,h$ and $*$.
\end{itemize}

We shall provide the following unified result, which implies Theorem \ref{thmgreen} (i) and Theorem \ref{thmgreen-q} (i) immediately. For simplicity of notations, we write $\mathcal G=\mathcal G_\omega$.

\begin{prop}\label{prop-green-1}
In each Case $\gamma=\textbf{I}$ and $\textbf{II}$, there is a positive number $C_{\gamma}$ such that
\begin{equation}\label{goal2}
\int_X\mathcal G(x,\cdot)\cdot J_{\gamma}(V_\omega\mathcal G(x,\cdot))\omega^n\le C_{\gamma}.
\end{equation}
\end{prop}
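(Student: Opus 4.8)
The plan is to run the auxiliary complex Monge--Amp\`ere argument of Guo--Phong--Song--Sturm, treating $\gamma=\mathbf I$ and $\gamma=\mathbf{II}$ in parallel. Fix $s$ and $x$, abbreviate $\mathcal G:=\mathcal G(x,\cdot)$, $v:=V_\omega\mathcal G$ (so $v\ge 2$ on $X$), and let $I:=\int_X\mathcal G\cdot J_\gamma(v)\,\omega^n$ be the quantity to be bounded. For any K\"ahler potential $\phi$ with $\omega_\phi:=\omega+\dd\phi\ge 0$, the Green identity $\phi(x)-V_\omega^{-1}\int_X\phi\,\omega^n=-\int_X\mathcal G\,\Delta_\omega\phi\,\omega^n$ together with $\Delta_\omega\phi=\mathrm{tr}_\omega\omega_\phi-n$ and the pointwise inequality $\mathrm{tr}_\omega\omega_\phi\ge n(\omega_\phi^n/\omega^n)^{1/n}$ gives, after using $V_\omega^{-1}\int_X\phi\,\omega^n-\phi(x)\le\mathrm{osc}_X\phi$,
\begin{equation*}
n\int_X\mathcal G\Big(\frac{\omega_\phi^n}{\omega^n}\Big)^{\frac1n}\omega^n\ \le\ \mathrm{osc}_X\phi+n\int_X\mathcal G\,\omega^n .
\end{equation*}
The last integral is $\le C_0+2$ by the uniform $L^1$-bound for Green's functions (Theorems~\ref{green-est-0}(i) and~\ref{int}), so everything reduces to a good choice of $\phi$ and a uniform bound on $\mathrm{osc}_X\phi$.

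I would take $\omega_\phi$ to solve $\omega_\phi^n=M^{-1}J_\gamma(v)^n\,\omega^n$ with $M:=V_\omega^{-1}\int_XJ_\gamma(v)^n\,\omega^n$, so that $[\omega_\phi]=[\omega]$. Three elementary properties of $J_\gamma$ make this work: (a) $J_\gamma$ is continuous and positive on $[2,\infty)$ with $\inf_{[2,\infty)}J_\gamma>0$ (for $\gamma=\mathbf{II}$ this is exactly where the hypothesis that $v^{(q-n)/q}/h(v)$ be increasing is used), whence $M\ge c_0>0$ uniformly, which is essential so that the constants in the $L^\infty$ estimate stay controlled; (b) $M<\infty$, because the singularity $\mathcal G\sim c_n d_\omega(x,\cdot)^{2-2n}$ near $x$ combined with $\int_1^\infty\frac{dw}{wh^{1/n}(w)}<\infty$ (the decay built into the weight $j_\gamma$) makes $J_\gamma(v)^n$ integrable; (c) $J_\gamma(v)^{n-1}\le v$ on $[2,\infty)$, hence $J_\gamma(v)^n\le v\,J_\gamma(v)$ and therefore $M\le I$. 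Substituting $(\omega_\phi^n/\omega^n)^{1/n}=M^{-1/n}J_\gamma(v)$ into the display and using (c),
\begin{equation*}
n\,I^{\frac{n-1}{n}}\ \le\ n\,M^{-1/n}I\ \le\ \mathrm{osc}_X\phi+C ,
\end{equation*}
so a uniform bound $\mathrm{osc}_X\phi\le C'$ immediately yields $I\le\big((C+C')/n\big)^{n/(n-1)}$ and proves the proposition.

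To bound $\mathrm{osc}_X\phi$ I would invoke the uniform $L^\infty$ estimate of Section~\ref{sect-Linfty}: hypothesis (H1) holds along $\mathcal L(\mathcal X,p,A,B)$, resp. $\mathcal N(\mathcal X,q,A,B)$, by Theorem~\ref{thm-alpha}, and it remains to verify (H2), namely that the density of the Monge--Amp\`ere equation relative to $\beta^n$,
\[
h:=\frac{J_\gamma(v)^n}{M}\,f_\omega,\qquad f_\omega=\frac{\omega^n/V_\omega}{\beta^n/V_\beta},
\]
satisfies $\int_X\tilde\Phi(\log(h+1))\,h\,\tfrac{\beta^n}{V_\beta}\le\tilde B$ for some $K$-function $\tilde\Phi$ and a uniform $\tilde B$; then Theorem~\ref{Linfty-family} (equivalently Theorem~\ref{Linfty} with the explicit constants of \eqref{C}) gives $\mathrm{osc}_X\phi\le C(n,p,A,B,h)$, resp. $C(n,q,A,B,h)$.

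This last verification is exactly where the classical argument breaks down and the new trick is needed: because the exponent of $\mathcal G$ here is the critical value $n/(n-1)$, the density $J_\gamma(v)^n$ is only barely integrable, so the naive split $\log(h+1)\le\log(J_\gamma(v)^n/M)+\log f_\omega$ followed by H\"older produces an $L^{p'}$-norm of $J_\gamma(v)^n$ that diverges (and at points $x$ where $f_\omega$ is large a crude split is even worse). Instead one introduces an auxiliary increasing function $g$, tuned to the exponent $p$ (resp.\ $q$) and to the weight $h$, and applies Young's inequality $ab\le\int_0^a g(t)\,dt+\int_0^b g^{-1}(s)\,ds$ to separate the contribution of $J_\gamma(v)^n$ --- which is then absorbed using only $\int_1^\infty\frac{dw}{wh^{1/n}(w)}<\infty$ together with $M\ge c_0$ --- from the contribution of $f_\omega$ --- which is absorbed by $\int_Xf_\omega^p\,\tfrac{\beta^n}{V_\beta}\le B$ (resp.\ the $L^1(\log L)^q$ bound). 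I expect this Young-inequality step, and the simultaneous choice of $g$ and $\tilde\Phi$ keeping the $J_\gamma$-part, the $f_\omega$-part and the resulting lower-order terms all under control, to be the main obstacle; the rest is the Guo--Phong--Song--Sturm template plus bookkeeping. Finally, Case $\mathbf I$ with $a=1$ is precisely Theorem~\ref{green-est-main}, since then $\mathcal G\,J_{\mathbf I}(V_\omega\mathcal G)=V_\omega^{1/(n-1)}\,\mathcal G^{n/(n-1)}\big/\bar h^{n/(n-1)}(\log(V_\omega\mathcal G+1))$.
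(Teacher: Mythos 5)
Your skeleton is the right one and coincides with the paper's: solve an auxiliary Monge--Amp\`ere equation whose density is built from $J_\gamma(v)^n f_\omega$, bound the oscillation of its potential via the uniform $L^\infty$ estimate of Section~\ref{sect-Linfty} (with (H1) from Theorem~\ref{thm-alpha} and (H2) to be checked), and feed $\mathrm{tr}_\omega\omega_\phi\ge n(\omega_\phi^n/\omega^n)^{1/n}$ into Green's formula to close a self-improving inequality for $I$. But the step you explicitly defer --- the verification of (H2) for the auxiliary density via a bespoke Young inequality --- is not ``bookkeeping plus the GPSS template''; it is essentially the entire proof of the proposition in the paper (the construction of $g_\gamma=\min\{h,e^v\}$, resp.\ $\min\{h(v+a),\log^q(v+a')\}$, of $\Phi_{\gamma,\alpha}=\bar g_{\gamma,\alpha}^n$, of the conjugate pair $\eta_\gamma,\eta_\gamma^{-1}$, and the tuning of $\alpha$). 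A proposal that identifies this as ``the main obstacle'' without carrying it out has not proved the statement.

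More importantly, your normalization makes that step circular as set up. You divide the density by $M=V_\omega^{-1}\int_XJ_\gamma(v)^n\omega^n$, for which you only have $c_0\le M\le I$. The dominant term in the entropy of $\rho=J_\gamma(v)^nf_\omega/M$ is $\int_X\tilde\Phi\bigl(\log(2+J_\gamma(v)^n/M)\bigr)\tfrac{J_\gamma(v)^n}{M}f_\omega\tfrac{\beta^n}{V_\beta}$; since any K-function satisfies $\tilde\Phi(w)\gtrsim w^n$ (up to slowly varying factors, forced by $\int\tilde\Phi^{-1/n}<\infty$), and $\log^n(J_\gamma(v)^n+2)\cdot J_\gamma(v)^n\approx v J_\gamma(v)/h(\log(v+a))$ in Case $\mathbf I$, this term is bounded below by a quantity comparable to $I/M$ (up to the slowly growing factor $h$), which is exactly what you are trying to bound --- it is \emph{not} controlled by $\int_1^\infty\tfrac{dw}{wh^{1/n}(w)}<\infty$ and $M\ge c_0$ alone. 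The paper avoids this by normalizing the density by $A_{\gamma,k}=\int_XH_kJ_\gamma(\hat H_k)\omega^n+1$, i.e.\ by (a truncation of) $I+1$ itself: then the dangerous entropy term telescopes, via $J_\gamma^n\bar h^n(\log(\hat H_k+a))=\hat H_kJ_\gamma(\hat H_k)$ (Case $\mathbf I$), to $(A_{\gamma,k}-1)/A_{\gamma,k}\le1$, while the Green's-formula step still closes because it yields $I\le C(I+1)^{1/n}$. Two further points you should incorporate from the paper's proof: the ``$+1$'' in the density $(J_\gamma^n/A_{\gamma,k}+1)/B_{\gamma,k}$ and the smooth truncations $H_k$ of $\min\{\mathcal G(x,\cdot),k\}$ (with a dominated/monotone convergence passage at the end) keep the Monge--Amp\`ere equation classical, rather than working directly with the singular $\mathcal G$.
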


\begin{proof}[Proof of Proposition \ref{prop-green-1}]

Write $\omega=\chi+\sqrt{-1}\partial\bar\partial\varphi$ with $\sup_X\varphi=0$.

For integer $k\gg1$ we fix a smooth positive function $H_k$, which is a smoothing of 
    $\min\{\mathcal{G}(x,\cdot),k\}$. Without loss of generality, we may assume that $H_k$ converges increasingly to 
    $\mathcal{G}(x,\cdot)$ as $k\to\infty$
    and for each $k$, 
    \[
        \hat{H}_k:=V_\omega\cdot H_k\ge1 \,\,\,on\,\,\,X.
    \]
   For  each Case $\gamma=\textbf{I}$ and $\textbf{II}$, define
    \begin{equation}
        A_{\gamma,k}:=\int_X H_k\cdot J_{\gamma}(\hat{H}_k)\omega^n+1=\int_X \hat H_k\cdot J_{\gamma}(\hat{H}_k)\frac{\omega^n}{V_\omega}+1=\int_X \hat H_k\cdot J_{\gamma}(\hat{H}_k)f_\omega\frac{\omega_X^n}{V_{\omega_X}}+1,
    \end{equation}
here recall that $f_\omega:=\frac{\omega^n/V_\omega}{\omega_X^n/V_{\omega_X}}.$ Then our goal is to obtain a unform upper bound for $A_{\gamma,k}$. To this end, we consider the  following complex Monge-Amp\`ere equation
    \[
        \frac{1}{V_\omega}(\chi+\dd\psi_{\gamma,k})^n=\frac{(J_{\gamma}(\hat{H}_k)^n/A_{\gamma,k}+1)}{B_{\gamma,k}}f_{\omega}\frac{\omega_X^n}{V_{\omega_X}}
    \]
    with 
    \[
        \sup_X \psi_k=0, B_{\gamma,k}=\int_X(1+\frac{1}{A_{\gamma,k}}J_{\gamma}(\hat{H}_k)^n)f_\omega\frac{\omega_X^n}{V_{\omega_X}}.
    \]
    Here we may particularly mention that we have involved the quantity $A_{\gamma,k}$ that we want to bound in the right hand side of the above auxiliary equation, which is a \emph{key difference} from auxiliary complex Monge-Amp\`ere equations in previous works \cite{GPSS1,GPSS2,GT,NV}.

    Firstly, observe that $B_{\gamma,k}$ are uniformly bounded for each $\gamma$. Actually, for $\gamma=\textbf I$,
    \begin{equation}
        \begin{aligned}
            J_{\textbf I}(\hat{H}_k)^n&=
            \frac{\hat{H}_k^{\frac{n}{n-1}}}{
                \bar{h}^{\frac{n^2}{n-1}}(\log(\hat{H}_k+a))}
            =\frac{\hat{H}_kJ_{\textbf I}(\hat{H}_k)}{
                \bar{h}^{n}(\log(\hat{H_k}+a))
            }\\
            &\leq \bar{h}^{-n}(\log(1+a))\hat{H}_kJ_{\textbf I}(\hat{H}_k),
        \end{aligned}
    \end{equation}
    then 
    \[
        1\leq B_k \leq 1+\bar{h}^{-n}(\log(1+a))\leq 1+\bar{h}^{-n}(\log2);
    \]
    while for $\gamma=\textbf{II}$,
    \begin{equation}
        \begin{aligned}
            J_{\textbf{II}}(\hat{H}_k)^n&=
            \frac{\hat{H}_k^{\frac{nq-n^2}{nq-q+n}}}{
                h^{\frac{nq}{nq-q+n}}(\hat{H}_k+a)}
            =\frac{\hat{H}_k J_{\textbf{II}}(\hat{H}_k)}{
                \hat{H}_k^{\frac{n^2}{nq-q+n}}h^{
                    \frac{q(n-1)}{nq-q+n}
                }(\hat{H}_k+a)
            }\\
            &\leq h^{-\frac{(n-1)q}{nq-q+n}}(1+a)\hat{H}_kf(\hat{H}_k),
        \end{aligned}
    \end{equation}
    then 
    \[
        1\leq B_k \leq 1+h^{-\frac{(n-1)q}{nq-q+n}}(1+a)\leq 1+h^{-\frac{(n-1)q}{nq-q+n}}(2).
    \]
    
    Denote $\omega_{\gamma,k}:=\chi+\dd\psi_{\gamma,k}$ and  $f_{\gamma,k}:=\frac{1}{B_{\gamma,k}}(J_{\gamma}(\hat{H}_k)^n/A_{\gamma,k}+1)f_\omega$. We are going to uniformly bound the entropy
    $$N_{\Phi_\gamma}(\omega_{\gamma,k})=\int_X\Phi_\gamma(\log(f_{\gamma,k}+1))f_{\gamma,k}\frac{\omega_X^n}{V_{\omega_X}}$$ 
    for suitable K-function $\Phi_\gamma$. In this part we have to discuss case by case.\\
    
\noindent \underline{Uniform bound on $N_{\Phi_{\textbf I}}(\omega_{\textbf I,k})$ in \textbf{Case} \textbf{I}.}
   
    We choose a function $g_{\textbf I}:(0,\infty)\to(0,\infty)$ by 
    $g_{\textbf I}(v):=\min\{h(v),e^v\}$ which is increasing and 
    \[
        \int_1^\infty \frac{1}{vg_{\textbf I}^{1/n}(v)}\mathrm{d}v<\infty.
    \] 
    Define $\bar{g}_{\textbf I}(v):=vg_{\textbf I}^{1/n}(v)$ and $\bar{g}_{\textbf I,\xi}(v)=\bar{g}_{\textbf I}(\xi v)$, where $\xi>0$ is a positive number to be determined.
    Let
    \[
        \Phi_{\textbf I,\xi}(v):=\bar{g}^n_{\textbf I,\xi}(v),
    \]
    which is a K-function for each $\xi>0$.
    We are going to uniformly bound 
    $$N_{\Phi_{\textbf I,\xi}}(\omega_{\textbf{I},k})=\int_X\Phi_{\textbf I,\xi}(\log(f_{\textbf{I},k}+1))f_{\textbf{I},k}\frac{\omega_X^n}{V_{\omega_X}}$$ 
    for some $\xi$.
    Firstly, we note that
        \begin{align}
            &\int_X\Phi_{\textbf I,\xi}(\log(f_{\textbf{I},k}+1))f_{\textbf{I},k}\frac{\omega_X^n}{V_{\omega_X}} \notag\\
            \leq & \int_X(J_{\textbf I}(\hat{H}_k)^n/A_{\textbf{I},k}+1)\bar{g}_{\textbf I,\xi}^n(
                \log(1+(J_{\textbf I}(\hat{H}_k)^n/A_{\textbf{I},k}+1)f_\omega))f_\omega\frac{\omega_X^n}{V_{\omega_X}} \notag\\
            \leq &\int_X (J_{\textbf I}(\hat{H}_k)^n/A_{\textbf{I},k}+1)(\bar{g}_{\textbf I,\xi}^n(2\log(2+J_{\textbf I}(\hat{H}_k)^n/A_{\textbf{I},k}))+
            \bar{g}_\alpha^n(2\log(1+f_\omega)))f_\omega\frac{\omega_X^n}{V_{\omega_X}} \notag\\
            =&\int_X (J_{\textbf I}(\hat{H}_k)^n/A_{\textbf{I},k}+1)(\bar{g}_{\textbf I,\xi}^n(
                2\log(2+J_{\textbf I}(\hat{H}_k)^n/A_{\textbf{I},k})))f_\omega\frac{\omega_X^n}{V_{\omega_X}}\label{1} \\
            &+\int_X (J_{\textbf I}(\hat{H}_k)^n/A_{\textbf{I},k}+1)(
                \bar{g}_{\textbf I,\xi}^n(2\log(1+f_\omega)))f_\omega\frac{\omega_X^n}{V_{\omega_X}}\label{2}
    \end{align}
    where in the second inequality we have applied 
    \begin{equation}\label{ele1}
        \log(1+xy)\leq \log(1+x)+\log(1+y)\,\,\,(x,y>0)
    \end{equation}
    and
    \begin{equation}\label{ele2}
        \bar{g}_{\textbf I,\xi}^n(x+y)\leq \bar{g}^n_{\textbf I,\xi}(2x)+\bar{g}^n_{\textbf I,\xi}(2y).
    \end{equation}
    To estimate \refe{1}, we note
    \[
        \bar{g}_{\textbf I,\xi}^n(2\log(2+J_{\textbf I}(\hat{H}_k)^n/A_{\textbf{I},k}))\leq
        \bar{g}_{\textbf I}^n(\frac{1}{2}\log(\hat{H}_k+a))
    \]
    where we have applied the fact $A_{\textbf{I},k}\geq1$ and for some $\theta>0$,
    \begin{equation}\label{log term}
        \log(2+J_{\textbf{I}}(v)^n)\leq \log(2+v^{\frac{n}{n-1}}/\log^{\frac{n^2}{n-1}}(v+a))
     \leq \theta\log(v+a),\,\,\, for\,\,\,any\,\,\, v\geq1
    \end{equation}
    and then chosen $\xi\leq \frac{1}{4\theta}$.
    Then by $g(v)\leq e^v$ we have 
    \begin{equation}\label{1.1}
        \begin{aligned}
            \int_X \bar{g}_{\textbf I,\xi}^n(
                2\log(&2+J_{\textbf I}(\hat{H}_k)^n/A_{\textbf I,k})))f_\omega\frac{\omega_X^n}{V_{\omega_X}} \leq
            \int_X \bar{g}_{\textbf I}^n(\frac{1}{2}\log(1+\hat{H}_k))f_\omega\frac{\omega_X^n}{V_{\omega_X}}\\
            &=\frac{1}{2^n}\int_X \log^n(1+\hat{H}_k) 
            g_{\textbf I}(\frac{1}{2}\log(1+\hat{H}_k))f_\omega\frac{\omega_X^n}{V_{\omega_X}}\\
            &\leq \frac{1}{2^n}
            \int_X \log^n(1+\hat{H}_k) (1+\hat{H}_k)^{
                \frac{1}{2}
            }f_\omega\frac{\omega_X^n}{V_{\omega_X}}\leq C
        \end{aligned}
    \end{equation}
    where we have applied Theorem \ref{thmgpss} (0), the H\"older inequality and 
    the inequality $\log^n(1+v)\leq Cv^{\frac{1}{2}}$ for any $v\geq1$.
    Note that $g_{\textbf I}(v)\leq h(v)$ implies $\bar{g}_{\textbf I}(v)\leq \bar{h}(v)$, then
    \begin{equation}\label{1.2}
        \begin{aligned}
            &\frac{1}{A_{\textbf I,k}}\int_X
            J_{\textbf I}(\hat{H}_k)^n\bar{g}_{\textbf I,\xi}^n(
                2\log(2+J_{\textbf I}(\hat{H}_k)^n/A_{\textbf I,k}))f_\omega\frac{\omega_X^n}{V_{\omega_X}}\\
            \leq&\frac{1}{A_{\textbf I,k}}\int_X
            J_{\textbf I}(\hat{H}_k)^n\bar{g}_{\textbf I}^n
                (\log(\hat{H}_k+a))f_\omega\frac{\beta^n}{V_\beta}\\
            \leq&\frac{1}{A_{\textbf I,k}}\int_X
            J_{\textbf I}(\hat{H}_k)^n\bar{h}^n
                (\log(\hat{H}_k+a))f_\omega\frac{\omega_X^n}{V_{\omega_X}}\\
                =&\frac{A_{\textbf I,k}-1}{A_{\textbf I,k}}\le1.
        \end{aligned}
    \end{equation}
    Combining \refe{1.1} and \refe{1.2} we have that \refe{1} is uniformly bounded.\\
    To estimate \refe{2}, we observe that 
    \begin{equation}\label{g}
        \begin{aligned}
            \bar{g}^n_{\textbf I,\xi}(2\log(1+f_\omega))&=(2\xi)^n\log^n(1+f_\omega)g_{\textbf I}(2\xi\log(1+f_\omega))\\
            &\leq(2\xi)^n\log^n(1+f_\omega)(1+f_\omega)^{2\xi}\\
            &\leq(2\xi)^n(e^{\xi \log f_\omega}+C(\xi))(1+e^{2\alpha  \log f_\omega})
        \end{aligned}
    \end{equation}
    where we have  chosen $2\xi<1$ and applied the estimates
     $g_{\textbf I}(v)\leq e^v$, $(1+v)^{2\xi}\leq 1+v^{2\xi}$ and  $\log^n(1+v)\leq v^\xi +C(\xi)$ for any $v>0$.
    We define an increasing function $\eta_{\textbf I}:(0,\infty)\to(0,\infty)$ by 
    \[
        \eta_{\textbf I}(z)=\left(\exp\{\frac{1}{2\xi}\bar{g}_{\textbf I}^{-1}(z^{\frac{1}{n}})\}-1\right)^{\frac{p-1}{2}}.
    \]
    It's easy to check 
    \[
        \eta_{\textbf I}^{-1}(w)=\bar{g}_{\textbf I}^n(2\xi\log(w^{\frac{2}{p-1}}+1)).
    \]
    Since 
    \begin{equation}\label{young1}
        \begin{aligned}
            \bar{g}_{\textbf I,\xi}^n(2\log(1+ f_\omega))
            \eta_{\textbf I}(\bar{g}_{\textbf I,\xi}^n(2\log(1+f_\omega)))
            =\bar{g}_{\textbf I,\xi}^n(2\log(1+f_\omega))e^{\frac{p-1}{2}\log f_\omega}\\
            \leq (2\xi)^n(e^{\xi \log f_\omega}+C(\xi))(1+e^{2\xi \log f_\omega})e^{\frac{p-1}{2}\log f_\omega}
        \end{aligned}
    \end{equation}
    and 
    \begin{equation}\label{young2}
        \begin{aligned}
            J_{\textbf I}(\hat{H}_k)^n\eta_{\textbf I}^{-1}(J_{\textbf I}(\hat{H}_k)^n)&=J_{\textbf I}(\hat{H}_k)^n
            \bar{g}_{\textbf I}^n(2\xi\log(J_{\textbf I}^{\frac{2n}{p-1}}(\hat{H}_k)+1))\\
            &\leq J_{\textbf I}(\hat{H}_k)^n\bar{g}_{\textbf I}^n(2\xi \theta\log(\hat{H}_k+a))\\
            &\leq J_{\textbf I}(\hat{H}_k)^n\bar{h}^n(\log(\hat{H}_k+a))\\
            &=\hat{H}_k J_{\textbf I}(\hat{H}_k)
        \end{aligned}
    \end{equation}
    where we have applied the estimate similar to \refe{log term} and chosen
    $\xi\leq \frac{1}{2\theta}$.\\
    By \refe{young1}, \refe{young2} and the generalized Young's inequality with respect to $\eta$, we have
    \begin{equation}\label{young}
        \begin{aligned}
            &J_{\textbf I}(\hat{H}_k)^n\bar{g}_{\textbf I,\xi}^n(2\log(1+f_\omega))\\
            \leq& J_{\textbf I}(\hat{H}_k)^n\eta_{\textbf I}^{-1}(J_{\textbf I}(\hat{H}_k)^n)
            +\bar{g}_{\textbf I,\xi}^n(2\log(1+f_\omega))\eta_{\textbf I}(\bar{g}_{\textbf I,\xi}^n(2\log(1+f_\omega)))\\
            \leq&\hat{H}_k J_{\textbf I}(\hat{H}_k)+(2\xi)^n(e^{\xi \log f_\omega}+C(\xi))(1+e^{2\xi \log f_\omega})e^{\frac{p-1}{2}\log f_\omega}.
        \end{aligned}
    \end{equation}
    Then 
    \begin{equation}
        \begin{aligned}
            \int_X& (J_{\textbf I}(\hat{H}_k)^n/A_{\textbf I,k}+1)(
                \bar{g}_\xi^n(2\log(1+f_\omega)))f_\omega\frac{\omega_X^n}{V_{\omega_X}}\\
            =&\frac{1}{A_{\textbf I,k}}\int_X J_{\textbf I}(\hat{H}_k)^n\bar{g}_{\textbf I,\xi}^n(2\log(1+f_\omega)))f_\omega\frac{\omega_X^n}{V_{\omega_X}}+\int_X
            \bar{g}_{\textbf I,\xi}^n(2\log(1+f_\omega)))f_\omega\frac{\omega_X^n}{V_{\omega_X}}\\
            \leq& \frac{1}{A_{\textbf I,k}}\int_X \hat{H}_k J_{\textbf I}(\hat{H}_k) f_\omega\frac{\omega_X^n}{V_{\omega_X}}
            +(2\xi)^n\int_X (e^{\xi \log f_\omega}+C(\xi))(1+e^{2\xi \log f_\omega})e^{\frac{p-1}{2}\log f_\omega}f_\omega\frac{\omega_X^n}{V_{\omega_X}}\\
            +&\int_X (2\xi)^n(e^{\xi \log f_\omega}+C(\xi))(1+e^{2\xi \log f_\omega})f_\omega\frac{\omega_X^n}{V_{\omega_X}}\\
            \leq& C
        \end{aligned}
    \end{equation}
    where we have applied the estimate \refe{young} and \refe{g} in the first inequality and
    chosen $\xi\leq \frac{p-1}{6}$. Therefore, \refe{2} is also uniformly bounded.
    In conclusion, we have that, for some $\xi>0$, $N_{\Phi_{\textbf I,\xi}}(\omega_{\textbf{I},k})=\int_X\Phi_{\textbf I,\xi}(\log(f_{\textbf{I},k}+1))f_{\textbf{I},k}\frac{\omega_X^n}{V_{\omega_X}}$
    is uniformly bounded. \\
    
\noindent \underline{Uniform bound on $N_{\Phi_{\textbf{II}}}(\omega_{\textbf{II},k})$ in \textbf{Case} \textbf{II}.}\\
We choose a function $g_{\textbf{II}}:(0,\infty)\to(0,\infty)$ by 
    $g_{\textbf{II}}(v)= \min\{h(v+a),\log^q(v+a')\}$ where 
    we have chosen $a'=a'(n,q)>0$ such that 
    $\frac{v^{\frac{q-n}{q}}}{\log^q(v+a')}$ is increasing,
    then $\frac{v^{\frac{q-n}{n}}}{g^{q/n}(v)}$ is an increasing function, as $\frac{v^{\frac{q-n}{q}}}{h(v+a)}=\left(\frac{v}{v+a}\right)^{\frac{q-n}{q}}\cdot\frac{(v+a)^{\frac{q-n}{q}}}{h(v+a)}$ is increasing, 
    and 
    \[
        \int_1^\infty \frac{1}{vg_{\textbf{II}}^{1/n}(v)}\mathrm{d}v<\infty.
    \] 
    Define $\bar{g}_{\textbf{II}}(v):=vg_{\textbf{II}}^{1/n}(v)$ and $\bar{g}_{\textbf{II},\xi}(v)=\bar{g}_{\textbf{II}}(\xi v)$.
    Let
    \[
        \Phi_{\textbf{II},\xi}(v):=\bar{g}^n_{\textbf{II},\xi}(v)
    \]
    which is a K-function for each $\xi>0$.
    We are going to uniformly bound 
    $$N_{\Phi_{\textbf{II},\xi}}(\omega_{\textbf{II},k})=\int_X\Phi_{\textbf{II},\xi}(\log(f_{\textbf{II},k}+1))f_{\textbf{II},k}\frac{\omega_X^n}{V_{\omega_X}}$$ 
    for some $\xi>0$.
    Firstly, similar to Case $\textbf I$, by \eqref{ele1} and \eqref{ele2} we have
    \begin{align}
            &\int_X\Phi_{\textbf{II},\xi}(\log(f_{\textbf{II},k}+1))f_{\textbf{II},k}\frac{\omega_X^n}{V_{\omega_X}} \notag\\
            \leq &\int_X (J_{\textbf{II}}(\hat{H}_k)^n/A_{\textbf{II},k}+1)(\bar{g}_{\textbf{II},\xi}^n(
                2\log(2+J_{\textbf{II}}(\hat{H}_k)^n/A_{\textbf{II},k})))f_\omega\frac{\omega_X^n}{V_{\omega_X}}\label{21} \\
            &+\int_X (J_{\textbf{II}}(\hat{H}_k)^n/A_{\textbf{II},k}+1)(
                \bar{g}_{\textbf{II},\xi}^n(2\log(1+f_\omega)))f_\omega\frac{\omega_X^n}{V_{\omega_X}}\label{22}
    \end{align}
    
    To estimate \refe{21}, we note
    \[
        \bar{g}_{\textbf{II},\xi}^n(2\log(2+J_{\textbf{II}}^n(\hat{H}_k)/A_k))\leq
        \bar{g}_{\textbf{II}}^n(\log(1+\hat{H}_k))
    \]
    where we have applied the estimate $A_{\textbf{II},k}\geq1$ and 
    \begin{equation}\label{2log term}
        \log(2+J_{\textbf{II}}^n(v)\leq \log(2+v^{\frac{nq-n^2}{nq-q+n}}/h^{\frac{q}{nq-q+n}}(1))
     \leq \beta\log(1+v), \,\,for\,\,\,any\,\,\, v\geq1
    \end{equation}
    and chosen $\xi\leq \frac{1}{2\beta}$.
    Then by $g_{\textbf{II}}(t)\leq \log^q(v+a')$ we have 
    \begin{equation}\label{2.1}
        \begin{aligned}
            \int_X \bar{g}_{\textbf{II},\xi}^n(
                2\log(&2+J_{\textbf{II}}^n(\hat{H}_k)/A_k))f_\omega\frac{\omega_X^n}{V_{\omega_X}} \leq
            \int_X \bar{g}_{\textbf{II}}^n(\log(1+\hat{H}_k))f_\omega\frac{\omega_X^n}{V_{\omega_X}}\\
            &=\int_X \log^n(1+\hat{H}_k) 
            g_{\textbf{II}}(\log(1+\hat{H}_k))f_\omega\frac{\omega_X^n}{V_{\omega_X}}\\
            &\leq C\int_X \log^{n+1}(1+\hat{H}_k)
            f_\omega\frac{\omega_X^n}{V_{\omega_X}}\leq C
        \end{aligned}
    \end{equation}
    where we have applied 
    the inequality $\log^{r}(v+a')\leq C(r,a') v$
    for any $v\geq1$ and  \refl{int} (see \cite{GPSS1,GPSS2}).
    By $g(v)\leq h(v+a)$ and $g(v)\leq \log^q(v+a')$ we have
    \begin{equation}\label{2.2}
        \begin{aligned}
            &\frac{1}{A_{\textbf{II},k}}\int_X
            J_{\textbf{II}}^n(\hat{H}_k)\bar{g}_{\textbf{II},\xi}^n(
                2\log(2+J_{\textbf{II}}^n(\hat{H}_k)/A_{\textbf{II},k}))f_\omega\frac{\omega_X^n}{V_{\omega_X}}\\
            \leq&\frac{1}{A_{\textbf{II},k}}\int_X
            J_{\textbf{II}}^n(\hat{H}_k)\bar{g}_{\textbf{II}}^n
                (\log(1+\hat{H}_k))f_\omega\frac{\omega_X^n}{V_{\omega_X}}\\
            =&\frac{1}{A_{\textbf{II},k}}\int_X
            \hat{H}_k J_{\textbf{II}}(\hat{H}_k)\frac{
                g_{\textbf{II}}^{\frac{nq-q}{nq-q+n}}(\hat{H}_k)}{
                h^{\frac{nq-q}{nq-q+n}}(\hat{H}_k+a)}
            \frac{
                \log^n(1+\hat{H}_k)g_{\textbf{II}}^{\frac{n}{nq-q+n}}(\log(\hat{H}_k+1))
            }{\hat{H}_k^{\frac{n^2}{nq-q+n}}}
            f_\omega\frac{\omega_X^n}{V_{\omega_X}}\\
            \leq& C.
        \end{aligned}
    \end{equation}
    Combining \refe{2.1} and \refe{2.2} we have that \refe{21} is uniformly bounded.\\
    To estimate \refe{22}, we observe that 
    \begin{equation}\label{2g}
        \begin{aligned}
            \bar{g}_{\textbf{II},\xi}^n(2\log(1+f_\omega))&=(2\xi)^n\log^n(1+f_\omega)g_{\textbf{II}}(2\xi\log(1+f_\omega))\\
            &\leq \log^{q}(1+f_\omega)+C\log^n(1+f_\omega)\\
        \end{aligned}
    \end{equation}
    where we have applied the estimates
    $g_{\textbf{II}}(v)\le \log^q(v+a')\leq v^{q-n}+C$ for any $v \ge0$
    and chosen $\alpha\leq \frac{1}{2}$. 
    We define an increasing function $\eta_{\textbf{II}}:(0,\infty)\to(0,\infty)$ with $\eta_{\textbf{II}}(0)=0$ by 
    \[
        \eta_{\textbf{II}}(z)=\frac{z^{\frac{q-n}{n}}}{g_{\textbf{II}}^{q/n}(\xi z)}\ge\frac{z^{\frac{q-n}{n}}}{g_{\textbf{II}}^{q/n}(z)}
    \]
    Let $w=\eta_{\textbf{II}}(z)$, then 
    \[
        w\ge \frac{z^{q/n-1}}{\log^{q^2/n}(z+a')}\ge z^{\frac{q-n}{(1+\varepsilon)n}}-b_\varepsilon
    \]
    for any $z\ge0$, where $\varepsilon>0$ is to be determined.
    We have
    \begin{equation}\label{inverse eta}
        \eta_{\textbf{II}}^{-1}(w)=z=w^{\frac{n}{q-n}}g^{\frac{q}{q-n}}_{\textbf{II},\xi}(z)\le 
        w^{\frac{n}{q-n}}g_{\textbf{II}}^{\frac{q}{q-n}}(\xi(w+b_\varepsilon)^\frac{(1+\varepsilon)n}{q-n}).
    \end{equation}
    By choosing $c=\exp\{\frac{1}{\xi^{1/(n-1)}}\}$ satisfying
    \[
        \alpha\bar{g}_{\textbf{II}}^n(\log(c+f_\omega))\ge\xi\log^n(c+f_\omega)\ge \log(c+f_\omega),
    \] 
    we have
    \begin{equation}\label{2young1}
        \begin{aligned}
            \bar{g}^n_{\textbf{II},\xi}(2\log(1+f_\omega))
            \eta_{\textbf{II}}(\bar{g}^n_{\textbf{II},\xi}(2\log(1+f_\omega)))
            &\leq\frac{
                \bar{g}_{\textbf{II}}^q(\log(c+f_\omega))
           }{g_{\textbf{II}}^{q/n}(\alpha\bar{g}_{\textbf{II}}^n(\log(c+f_\omega)))}\\
            &\leq \log^q((c+f_\omega))\frac{g_{\textbf{II}}^{q/n}(\log(c+f_\omega))}{
                g_{\textbf{II}}^{q/n}(\log(c+f_\omega))
            }\\
            &= \log^q(c+f_\omega).
        \end{aligned}
    \end{equation}
    
    On the other hand,
    \begin{equation}\label{2young2}
        \begin{aligned}
            J_{\textbf{II}}^n(\hat{H}_k)&\eta_{\textbf{II}}^{-1}(J_{\textbf{II}}^n(\hat{H}_k))\le J_{\textbf{II}}^{\frac{nq}{q-n}}(\hat{H}_k)
            g_{\textbf{II}}^{\frac{q}{q-n}}(\xi(J_{\textbf{II}}^n(\hat{H}_k)+b_\varepsilon)^{\frac{(1+\varepsilon)n}{q-n}})\\
            &\leq J_{\textbf{II}}^{\frac{nq}{q-n}}(\hat{H}_k)g_{\textbf{II}}^{\frac{q}{q-n}}(\xi\mu\hat{H}_k^{\frac{
                (1+\varepsilon)n^2
            }{nq-q+n}})\\
            &\leq J_{\textbf{II}}^{\frac{nq}{q-n}}(\hat{H}_k)h^{\frac{q}{q-n}}(\hat{H}_k)\\
            &=\hat{H}_k J_{\textbf{II}}(\hat{H}_k)
        \end{aligned}
    \end{equation}
    where we have chosen $\varepsilon=\frac{(q-n)(n-1)}{n^2}$ with
    $\frac{(1+\varepsilon)n^2}{nq-q+n}=1$, and $\mu$ satisfying
    \[
        (J_{\textbf{II}}^n(\hat{H}_k)+b_\varepsilon)^{\frac{(1+\varepsilon)n}{q-n}}
    \leq \mu\hat{H}_k^{\frac{(1+\varepsilon) n^2}{nq-q+n}}.
    \]
    and the number $\xi$ with
    $\xi\leq \frac{1}{\mu}$.
    By \refe{2young1}, \refe{2young2} and the generalized Young's inequality with respect to $\eta_{\textbf{II}}$, we have
    \begin{equation}\label{2young}
        \begin{aligned}
            &J_{\textbf{II}}^n(\hat{H}_k)\bar{g}_{\textbf{II},\xi}^n(2\log(1+f_\omega))\\
            \leq& J_{\textbf{II}}^n(\hat{H}_k)\eta_{\textbf{II}}^{-1}(J_{\textbf{II}}^n(\hat{H}_k))
            +\bar{g}_{\textbf{II},\xi}^n(2\log(1+f_\omega))\eta_{\textbf{II}}(\bar{g}_{\textbf{II},\xi}^n(2\log(1+f_\omega)))\\
            \leq&\hat{H}_k J_{\textbf{II}}(\hat{H}_k)+\log^q(c+f_\omega).
        \end{aligned}
    \end{equation}
    Then 
    \begin{equation}
        \begin{aligned}
            \int_X& (J_{\textbf{II}}^n(\hat{H}_k)/A_{\textbf{II},k}+1)(
                \bar{g}_{\textbf{II},\xi}^n(2\log(1+f_\omega)))f_\omega\frac{\omega_X^n}{V_{\omega_X}}\\
            =&\frac{1}{A_{\textbf{II},k}}\int_X J_{\textbf{II}}^n(\hat{H}_k)\bar{g}_\alpha^n(2\log(1+f_\omega))f_\omega\frac{\omega_X^n}{V_{\omega_X}}+\int_X
            \bar{g}_{\textbf{II},\xi}^n(2\log(1+f_\omega))f_\omega\frac{\omega_X^n}{V_{\omega_X}}\\
            \leq& \frac{1}{A_{\textbf{II},k}}\int_X \hat{H}_k J_{\textbf{II}}(\hat{H}_k) f_\omega\frac{\omega_X^n}{V_{\omega_X}}
            +\int_X \log^q(c+f_\omega)f_\omega\frac{\omega_X^n}{V_{\omega_X}}
            +\int_X \log^q(1+f_\omega)f_\omega\frac{\omega_X^n}{V_{\omega_X}}\nonumber\\
            &\,\,\,\,\,+C\int_X \log^n(1+f_\omega)f_\omega\frac{\omega_X^n}{V_{\omega_X}}\\
            \leq& C
        \end{aligned}
    \end{equation}
    where we have applied the estimate \refe{2young} and \refe{2g} in the first inequality. Therefore, the term \eqref{22} is also uniformly bounded. We have proved that $N_{\Phi_{\textbf{II},\xi}}(\omega_{\textbf{II},k})$ is uniformly bounded for some $\xi>0$.\\

     Applying Theorem \ref{Linfty} gives that 
    \begin{equation}
        \sup_X|\psi_{\gamma,k}-\varphi|\leq C_{\gamma}
    \end{equation}
    for some uniform constant $C_{\gamma}$.\\

    For each Case $\gamma=\textbf{I}$ and $\textbf{II}$, we now consider the function
    \begin{equation}
        v_{\gamma,k}:=(\psi_{\gamma,k}-\varphi)-\int_X(\psi_{\gamma,k}-\varphi)\frac{\omega^n}{V_\omega}.
    \end{equation}
    We then calculate the Laplacian of $v_{\gamma,k}$:
    \begin{equation}
        \begin{aligned}
            \Delta_\omega v_{\gamma,k}&=\mathrm{tr}_\omega(\omega_{\gamma,k})-n\\
            &\geq n\left(\frac{\omega^n_{\gamma,k}}{\omega^n}\right)^{1/n}-n\\
            &\geq nB_{\gamma,k}^{-1/n}J_{\gamma}(\hat{H}_k)/A^{1/n}_{\gamma,k}-n.
        \end{aligned}
    \end{equation}
    Applying Green's formula to $v_{\gamma,k}$ and \refl{int}, we have 
    \begin{equation}
        v_{\gamma,k}(x)=-\int_X \mathcal{G}(x,\cdot)\Delta_\omega v_{\gamma,k}\omega^n\leq 
        -C\frac{1}{A_{\gamma,k}^{1/n}} \int_X\mathcal{G}(x,\cdot)J_{\gamma}(\hat{H}_k)\omega^n+C.
    \end{equation}
    Since $v_{\gamma,k}$ is uniformly bounded,  
    we have
    \[
        \int_X\mathcal{G}(x,\cdot)J_{\gamma}(\hat{H}_k)\omega^n\leq C\left(\int_X H_kJ_{\gamma}(\hat{H}_k)
        \omega^n+1\right)^{\frac{1}{n}}
    \]
    In Case $\textbf{I}$,  we observe that 
    \[
        J_{\textbf I}(\hat{H}_k)\leq \frac{\hat{H}_k^{\frac{1}{n-1}}}{\log^{\frac{n}{n-1}}(\hat{H}_k+1)}=:\tilde{J}_{\textbf I}(\hat{H}_k),
    \]
    and 
    \[
        \lim_{k\to\infty}\int_X \mathcal{G}(x,\cdot)\tilde{J}_{\textbf{I}}(\hat{H}_k)\omega^n
        =\int_X \mathcal{G}(x,\cdot)\tilde{J}_{\textbf I}(\mathcal{G}(x,\cdot))\omega^n,
    \]
    \[
        \lim_{k\to\infty}\int_X H_k\tilde{J}_{\textbf I}(\hat{H}_k)\omega^n
        =\int_X \mathcal{G}(x,\cdot)\tilde{J}_{\textbf I}(\mathcal{G}(x,\cdot))\omega^n;
    \]
    while in Case $\textbf{II}$, $J_{\textbf{II}}$ is an increasing function on $[1,\infty)$. 
    We then apply Dominated Convergence Theorem and Monotone Convergence Theorem in Cases $\textbf{I}$ and $\textbf{II}$, respectively, to conclude that, for each $\gamma=\textbf{I}$ and $\textbf{II}$,
    \begin{equation}
            \int_X\mathcal{G}(x,\cdot)J_\gamma(V_\omega\mathcal{G}(x,\cdot))
            \omega^n\leq C\left(\int_X \mathcal{G}(x,\cdot)J_\gamma(V_\omega\mathcal{G}(x,\cdot))
            \omega^n+1\right)^{\frac{1}{n}}
    \end{equation}
    which immediately implies \refe{goal2}. 
    
    Proposition \ref{prop-green-1} is proved.
\end{proof}

\begin{proof}[Proof of Theorems \ref{thmgreen} (i) and \ref{thmgreen-q} (i)]
Applying Proposition \ref{prop-green-1} in Cases $\textbf{I}$ and $\textbf{II}$ with $a=1$ completes the proof of Theorems \ref{thmgreen} (i) and \ref{thmgreen-q} (i), respectively.
\end{proof}

\section{Integral estimate on gradient of Green's function: proof of Theorem \ref{thmgreen} (ii)}\label{sect-nabla}

As an application of the uniform integral estimates obtained in Proposition \ref{prop-green-1}, in this section we will prove uniform intrgral estimates on the gradient of Green's function under both $L^p$ and $L^1(\log L)^q$ conditions on volume density.

\begin{prop}\label{prop-nabla-general}
For each Case $\gamma=\textbf{I}$ and $\textbf{II}$, there is a positive number $\tilde C_{\gamma}$ such that for any $x\in X$,
$$V_\omega^{\tilde k_{\gamma}-1}\int_X\frac{|\nabla_\omega\mathcal G(x,\cdot)|^{\tilde k_{\gamma}}}{j_\gamma^{\tilde k_{\gamma}}(V_\omega\mathcal G(x,\cdot))}\omega^n\le \tilde C_\gamma,$$
where $\tilde k_{\gamma}:=\frac{2k_\gamma}{k_\gamma+1}$.
\end{prop}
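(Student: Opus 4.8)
The plan is to run, in weighted form, the classical integration-by-parts scheme that bounds an integral of $|\nabla G|$ by an integral of $|G|$ (as in Cheng--Li and in \cite{GPSt,GPSS1,GT}). Fix $s\in\mathbb D^*$ and $x\in X$ and write $\mathcal G:=\mathcal G_\omega(x,\cdot)$, $V:=V_\omega$, $j:=j_\gamma$, $k:=k_\gamma$, $\tilde k:=\tilde k_\gamma=\frac{2k}{k+1}$; recall $V\mathcal G\ge 2$ by construction and $1<\tilde k<2$ in both cases. First I would introduce the auxiliary potential
\[
\Psi(u):=\int_1^u\frac{dr}{r\,j(r)},\qquad u\ge 1 .
\]
Since $j$ is positive and Lipschitz on $[1,\infty)$, $\Psi$ is $C^1$, nondecreasing and --- this is the whole point of the construction --- bounded, with $L_\gamma:=\Psi(+\infty)=\int_1^\infty\frac{dr}{r\,j(r)}<+\infty$: in Case $\textbf{I}$ the substitution $u=\log(r+a)$ identifies this integral, up to a multiplicative constant, with $\int^\infty\frac{du}{u\,h^{1/n}(u)}$, finite by \eqref{h}; in Case $\textbf{II}$ the monotonicity of $h$ gives $\int^\infty\frac{dr}{r\,h^{1/n}(r+a)}\le\int^\infty\frac{dr}{r\,h^{1/n}(r)}<+\infty$. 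Hence $0\le\Psi\le L_\gamma$ on $[2,\infty)$, so $\Psi(V\mathcal G)$ is a bounded function on $X$ extending continuously to $x$ with value $L_\gamma$ (as $\mathcal G\to+\infty$ there), and by design $\nabla_\omega[\Psi(V\mathcal G)]=\dfrac{\nabla_\omega\mathcal G}{\mathcal G\,j(V\mathcal G)}$.

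Next I would establish the identity
\[
\int_X\frac{|\nabla_\omega\mathcal G|^2}{\mathcal G\,j(V\mathcal G)}\,\omega^n
=\int_X\nabla_\omega[\Psi(V\mathcal G)]\cdot\nabla_\omega\mathcal G\,\omega^n
=L_\gamma-V^{-1}\int_X\Psi(V\mathcal G)\,\omega^n
\le L_\gamma .
\]
This comes from integrating $\nabla_\omega[\Psi(V\mathcal G)]\cdot\nabla_\omega\mathcal G$ against a cutoff $\phi_\epsilon$ that vanishes on $B_\omega(x,\epsilon)$ and equals $1$ off $B_\omega(x,2\epsilon)$, integrating by parts on the closed manifold $X$, using $\Delta_\omega\mathcal G=-\delta_x+V^{-1}$, and letting $\epsilon\to 0$: the bulk term $-V^{-1}\int_X\phi_\epsilon\Psi(V\mathcal G)\,\omega^n$ converges to $-V^{-1}\int_X\Psi(V\mathcal G)\,\omega^n$, while the shell term $-\int_X\Psi(V\mathcal G)\,\nabla_\omega\phi_\epsilon\cdot\nabla_\omega\mathcal G\,\omega^n$ converges to $L_\gamma$, since $\Psi(V\mathcal G)\to L_\gamma$ uniformly on $B_\omega(x,2\epsilon)\setminus B_\omega(x,\epsilon)$ while $\int_X\nabla_\omega\phi_\epsilon\cdot\nabla_\omega\mathcal G\,\omega^n\to-1$ and $\int_{B_\omega(x,2\epsilon)\setminus B_\omega(x,\epsilon)}|\nabla_\omega\phi_\epsilon|\,|\nabla_\omega\mathcal G|\,\omega^n=O(1)$. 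For all this to make sense I would first check, from the near-diagonal asymptotics $|G_\omega(x,\cdot)|\sim d_\omega(x,\cdot)^{2-2n}$ and $|\nabla_\omega G_\omega(x,\cdot)|\sim d_\omega(x,\cdot)^{1-2n}$, that $\frac{|\nabla_\omega\mathcal G|^2}{\mathcal G\,j(V\mathcal G)}$ is integrable near $x$: the relevant radial integral, after the same change of variables as above, is a tail of $\int^\infty\frac{dv}{v\,h^{1/n}(v)}$, hence finite precisely by \eqref{h}. (The last inequality in the display uses $\Psi\ge 0$; nonnegativity of the first integrand also justifies passing to the limit by monotone convergence.)

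Finally, since $1<\tilde k<2$, I would write
\[
\frac{|\nabla_\omega\mathcal G|^{\tilde k}}{j^{\tilde k}(V\mathcal G)}
=\left(\frac{|\nabla_\omega\mathcal G|^2}{\mathcal G\,j(V\mathcal G)}\right)^{\tilde k/2}\cdot\frac{\mathcal G^{\tilde k/2}}{j^{\tilde k/2}(V\mathcal G)}
\]
and apply H\"older with exponents $\frac{2}{\tilde k}$ and $\frac{2}{2-\tilde k}$; using $\frac{\tilde k}{2-\tilde k}=k$ and $\frac{2-\tilde k}{2}=\frac{1}{k+1}$ this yields
\[
\int_X\frac{|\nabla_\omega\mathcal G|^{\tilde k}}{j^{\tilde k}(V\mathcal G)}\,\omega^n
\le\left(\int_X\frac{|\nabla_\omega\mathcal G|^2}{\mathcal G\,j(V\mathcal G)}\,\omega^n\right)^{\tilde k/2}
\left(\int_X\frac{\mathcal G^{k}}{j^{k}(V\mathcal G)}\,\omega^n\right)^{\frac{1}{k+1}} .
\]
The first factor is $\le L_\gamma^{\tilde k/2}$ by the identity above; for the second, Proposition~\ref{prop-green-1} gives $\int_X\frac{\mathcal G^{k}}{j^{k}(V\mathcal G)}\,\omega^n=V^{1-k}\int_X\mathcal G\,J_\gamma(V\mathcal G)\,\omega^n\le V^{1-k}C_\gamma$. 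Since $\tilde k-1=\frac{k-1}{k+1}$, multiplying through by $V^{\tilde k-1}$ cancels the powers of $V$ and leaves the uniform bound $V^{\tilde k-1}\int_X\frac{|\nabla_\omega\mathcal G|^{\tilde k}}{j^{\tilde k}(V\mathcal G)}\,\omega^n\le L_\gamma^{\tilde k/2}C_\gamma^{1/(k+1)}=:\tilde C_\gamma$, uniformly in $s$ and $x$.

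The step I expect to be the main obstacle is the integration by parts across the pole of $\mathcal G$ at $x$: verifying the local integrability of $\frac{|\nabla_\omega\mathcal G|^2}{\mathcal G\,j(V\mathcal G)}$ and pinning the limiting boundary contribution down to exactly $L_\gamma$. Both hinge on the near-diagonal asymptotics of $G_\omega$ together with the convergence of $\int^\infty\frac{dv}{v\,h^{1/n}(v)}$ --- the very hypothesis \eqref{h} that already underlies Theorems~\ref{green-est-main} and \ref{green-est-main-q} --- which is exactly why the weight $j_\gamma$ is calibrated as it is: so that $\Psi$ stays bounded and the scheme closes up at the critical exponent $\tilde k=\frac{2k}{k+1}$.
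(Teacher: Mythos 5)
Your proof is correct and follows essentially the same route as the paper: a weighted energy identity $\int_X|\nabla_\omega\mathcal G|^2/(\mathcal G\,j_\gamma(V_\omega\mathcal G))\,\omega^n\le C$ obtained from Green's formula applied to an auxiliary bounded primitive of $\frac{1}{v\,j_\gamma(v)}$ (the paper's Lemma~\ref{nabla1} uses $u_\gamma(v)=\int_v^\infty\frac{ds}{(s+a)j_\gamma(s)}$, the mirror image of your $\Psi$), followed by exactly the same H\"older splitting with exponents $\frac{2}{\tilde k_\gamma}$, $\frac{2}{2-\tilde k_\gamma}$ and Proposition~\ref{prop-green-1}. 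The only cosmetic difference is that you carry out the integration by parts across the pole explicitly with a cutoff, whereas the paper evaluates the Green representation formula directly at $x$ using $u_\gamma(\infty)=0$.
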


By definition, $\tilde k_{\textbf{I}}=\frac{2n}{2n-1}$ and $\tilde k_{\textbf{II}}=\frac{2nq}{2nq-q+n}$.

For the proof of Proposition \ref{prop-nabla-general}, we will need an independent lemma anologous to \cite[Lemma 5.6]{GPSS1} and \cite[Lemma 7.7]{Liuj}.

\begin{lemma}\label{nabla1}
There is a constant $C_h=\int_{\log 2}^\infty\frac{1}{\overline{h}(r)}\mathrm{d}r$ such that for any $x\in X$ and $a\ge1$, we have 
    \begin{equation}
        \int_X \frac{|\nabla_y \mathcal G(x,y)|^2}{
            \mathcal{G}(x,y)j_\gamma(V_\omega\mathcal{G}(x,y))
        }\omega^n(y)\leq (1+a)C_h.
    \end{equation}
\end{lemma}
\begin{proof}
  For each Case $\gamma=\textbf{I}$ and $\textbf{II}$ we consider the function $u_\gamma:(1,\infty)\to(0,\infty)$ which is 
    defined by the following ordinary differential equation
    \begin{equation}
        \begin{cases}
            u_\gamma'(v)=-\frac{1}{(v+a)j_\gamma(v)}\\
            u_\gamma(\infty)=0
        \end{cases}
    \end{equation}
    then by \refe{h}, we have 
    \begin{equation}\label{u<C}
        \begin{aligned}
            u_\gamma(v)=&\int_v^{\infty}\frac{1}{(s+a)j_\gamma(s)}\mathrm{d}s\\
                \le&\int_{\log(v+a)}^\infty\frac{1}{\overline{h}(r)}\mathrm{d}r\\
                \leq& \int_{\log 2}^\infty\frac{1}{\overline{h}(r)}\mathrm{d}r=:C_h<+\infty.
        \end{aligned}
    \end{equation}
    We now fix a point $x\in X$ and let $U_\gamma(y)=u_\gamma(V_\omega\mathcal{G}(x,y))$.
    Applying the Green's formula, we have
    \begin{equation}
        \begin{aligned}
            0&=U_\gamma(x)\\
            &=\frac{1}{V_\omega}\int_X U_\gamma \omega^n+\int_X
            \nabla\mathcal{G}(x,\cdot)\cdot\nabla U_\gamma \omega^n\\
            &=\frac{1}{V_\omega}\int_X U_\gamma \omega^n-\int_X
            \frac{V_\omega|\nabla \mathcal G(x,\cdot)|^2}{
                (V_\omega\mathcal{G}(x,\cdot)+a)
                j_\gamma(V_\omega\mathcal{G}(x,\cdot)}\omega^n
        \end{aligned}
    \end{equation}
    The lemma then follows easily from \refe{u<C} and 
    \[
        \frac{V_\omega}{V_\omega\mathcal{G}(x,\cdot)+a}\ge \frac{1}{(1+a)\mathcal{G}(x,\cdot)}.
    \]
\end{proof}

\begin{proof}[Proof of Proposition \ref{prop-nabla-general}]
Applying H\"older inequality, Lemma \ref{nabla1} and Proposition \ref{prop-green-1} gives
    \begin{align}\label{nablag}
          & \int_X \frac{|\nabla \mathcal G(x,\cdot)|^{\tilde k_\gamma}}{
       j_\gamma^{\tilde k_\gamma}(V_\omega\mathcal{G}(x,\cdot)
    }\omega^n\nonumber\\
    &=\int_X \frac{
        |\nabla \mathcal G(x,\cdot)|^{\tilde k_\gamma}
    }{
        \mathcal{G}(x,\cdot)^{\frac{\tilde k_\gamma}{2}} j_\gamma^{\frac{\tilde k_\gamma}{2}}(V_\omega\mathcal{G}(x,\cdot)
    }\cdot\frac{\mathcal{G}(x,\cdot)^{\frac{\tilde k_\gamma}{2}}}{
       j_\gamma^{\frac{\tilde k_\gamma}{2}}(V_\omega\mathcal{G}(x,\cdot)
    }\omega^n\nonumber\\
    &\leq  \left(\int_X  \frac{|\nabla \mathcal G(x,\cdot)|^2}{
        \mathcal{G}(x,\cdot)j_\gamma(V_\omega\mathcal{G}(x,\cdot))}\omega^n
        \right)^{\frac{\tilde k_\gamma}{2}}\left(
            \int_X \frac{\mathcal{G}(x,\cdot)^{k_\gamma}}{
               j_\gamma^{k_\gamma}(V_\omega\mathcal{G}(x,\cdot))
            }\omega^n
        \right)^{1-\frac{\tilde k_\gamma}{2}}\nonumber\\
    &\leq C_1\cdot C_2V_\omega^{-\frac{k_\gamma-1}{k_\gamma+1}}=CV_\omega^{-(\tilde k_\gamma-1)},
        \end{align}
Proposition \ref{prop-nabla-general} is proved. 
\end{proof}

\begin{proof}[Proof of Theorem \ref{thmgreen} (ii) and Theorem \ref{thmgreen-q} (ii)]
Applying Proposition \ref{prop-nabla-general} in Cases $\textbf{I}$ and $\textbf{II}$ with $a=1$ concludes Theorem \ref{thmgreen} (ii) and Theorem \ref{thmgreen-q} (ii), respectively.
\end{proof}

\section{Volume non-collapsing estimate: proof of Theorem \ref{thmgeom} (a)}\label{sect-volume}

We firstly provide a general version of volume noncollapsing estimate, as an application of Proposition \ref{prop-nabla-general}.

\begin{prop}\label{prop-vol-general}
For each Case $\gamma=\textbf{I}$ and $\textbf{II}$, if $j_\gamma^{\frac{\tilde k_\gamma}{\tilde k_\gamma-1}}$ is concave on $[1,+\infty)$, then there is a positive number $c_\gamma$ such that for any $x\in X$ and $R\in(0,1]$,
\begin{equation}
\underline V_\omega(x,R)j_\gamma^{\frac{\tilde k_\gamma}{\tilde k_\gamma-1}}\left(\frac{\tilde C_0}{\underline V_\omega(x,R)}\right)\ge c_\gamma R^{\frac{\tilde k_\gamma}{\tilde k_\gamma-1}}\nonumber,
\end{equation}
where $\underline{V}_\omega(x,R):=\frac{\mathrm{Vol}_\omega(B_\omega(x,R))}{V_\omega}$, $\tilde C_0:=C_0+2$ and $C_0$ is a number satisfying Theorem \ref{thmgpss} (0).
\end{prop}
By definition, $\frac{\tilde k_{\textbf{I}}}{\tilde k_{\textbf{I}}-1}=2n$ and $\frac{\tilde k_{\textbf{II}}}{\tilde k_{\textbf{II}}-1}=\frac{2nq}{q-n}$.
\begin{proof}
    For a fixed point $x\in X$, we choose a smooth cutoff function $\eta$ with support in 
    $B(x,R)$ satisfying
    \[
        \eta=1\,\,\,\mathrm{on} \,\,\,B\left(x,\frac{R}{2}\right), \,\,\,\,
        \sup_X|\nabla\eta|_\omega\leq \frac{4}{R}.
    \]
    Applying the Green's formula to $\eta$, we have for any $z\in X$
    \begin{equation}\label{cutoff}
        \eta(z)=\frac{1}{[\omega]^n}\int_X \eta\omega^n+
        \int_X \langle \nabla G(z,\cdot),\nabla \eta \rangle_\omega \omega^n.
    \end{equation}
    Take $z\in X\backslash\overline{B(x,R)}$, then 
    \[
        0=\frac{1}{[\omega]^n}\int_X \eta\omega^n+
        \int_X \langle \nabla G(z,\cdot),\nabla \eta \rangle_\omega \omega^n.
    \]
    We have 
    \begin{equation}\label{Non 1}
        \begin{aligned}
            1&=\eta(x)=\frac{1}{[\omega]^n}\int_X \eta\omega^n+
        \int_X \langle \nabla G(x,\cdot),\nabla \eta \rangle_\omega \omega^n\\
        &=-\int_X \langle \nabla G(z,\cdot),\nabla \eta \rangle_\omega \omega^n
        \int_X \langle \nabla G(x,\cdot),\nabla \eta \rangle_\omega \omega^n\\
        &\leq 2\sup_{y\in X}
        \int_X |\nabla G(y,\cdot)|\cdot|\nabla \eta| \omega^n\\
        &\leq \frac{8}{R}\sup_{y\in X}
        \int_{B(x,R)} |\nabla G(y,\cdot)| \omega^n
        \end{aligned}
    \end{equation}
    On the other hand, 
    \begin{equation}\label{Non 2}
        \begin{aligned}
            &\int_{B(x,R)} |\nabla \mathcal G(y,\cdot)| \omega^n\\
            &=\int_{B(x,R)} \frac{|\nabla\mathcal G(y,\cdot)|}{j_\gamma(
               V_\omega \mathcal{G}(y,\cdot))}\cdot
            j_\gamma(
               V_\omega \mathcal{G}(y,\cdot))\omega^n\\
            &\leq \left(
                \int_X \frac{|\nabla\mathcal G(y,\cdot)|^{\tilde k_\gamma}}{
                   j_\gamma^{\tilde k_\gamma}(
               V_\omega \mathcal{G}(y,\cdot))}
                \omega^n
            \right)^{\frac{1}{\tilde k_\gamma}}
            \left(
                \int_{B(x,R)}j_\gamma^{\frac{\tilde k_\gamma}{\tilde k_\gamma-1}}(
               V_\omega \mathcal{G}(y,\cdot))\omega^n
            \right)^{\frac{\tilde k_\gamma-1}{\tilde k_\gamma}}\\
            &\leq CV_\omega^{-\frac{\tilde k_\gamma-1}{\tilde k_\gamma}} \left(
                \int_{B(x,R)}j_\gamma^{\frac{\tilde k_\gamma}{\tilde k_\gamma-1}}(
               V_\omega \mathcal{G}(y,\cdot))\omega^n
            \right)^{\frac{\tilde k_\gamma-1}{\tilde k_\gamma}}\\
            &=C\underline{V}(x,R)^{\frac{\tilde k_\gamma-1}{\tilde k_\gamma}} \left(
                \frac{1}{Vol_\omega(B(x,R))}\int_{B(x,R)}j_\gamma^{\frac{\tilde k_\gamma}{\tilde k_\gamma-1}}(
               V_\omega \mathcal{G}(y,\cdot))\omega^n
            \right)^{\frac{\tilde k_\gamma-1}{\tilde k_\gamma}}\\
            &\le C\underline{V}(x,R)^{\frac{\tilde k_\gamma-1}{\tilde k_\gamma}} j_\gamma\left(\frac{C_0+2}{\underline V_\omega(x,R)}\right),
        \end{aligned}
    \end{equation}
where in the first inequality we have used H\"older inequality and in the last inequality we have applied Jensen inequality with respect to the concave function $j_\gamma^{\frac{\tilde k_\gamma}{\tilde k_\gamma-1}}$.

Combining \eqref{Non 1} and \eqref{Non 2} finishes the proof of Proposition \ref{prop-vol-general}.
\end{proof}

Next we check concavity of $j_\gamma^{\frac{\tilde k_\gamma}{\tilde k_\gamma-1}}$ in a useful setting.
In the remaining part of this section, we assume $h$ is of the following specific form: arbitrarily given $k\ge1$ and $b>1$, set $l_0(v)=1$, $l_1(v)=\log(v+b)$, and for $2\le d\le k$, $l_d(v)=l_{1}(l_{d-1}(v))=\log(l_{d-1}(v)+b)$; the number $b=b(k)$ is large with $l_{d}(1)\ge1$ for each $d=1,...,k$. Then we assume 
\begin{align}\label{h-form}
h(v)=l_0(v)l_1^n(v)\cdots l_{k-1}^n(v)\cdot l_{k}^r(v),
\end{align}
where $k\ge1$ and $r>n$. In Case $\textbf{II}$ we further increase $b=b(k,n,q)$ such that $\frac{v^{\frac{q-n}{q}}}{h(v)}$ is increasing on $[1,\infty)$.

\begin{lemma}\label{lem-concave}
For $h$ given by \eqref{h-form}, there is a positive number $a_0=a_0(n,k,b,r)$ such that for $a\ge a_0$,
\begin{itemize}
\item[(1)] $j_\gamma^{\frac{\tilde k_\gamma}{\tilde k_\gamma-1}}$ is concave on $[1,\infty)$;
\item[(2)] the function $zj_\gamma^{\frac{\tilde k_\gamma}{\tilde k_\gamma-1}}\left(\frac{\tilde C_0}{z}\right)$ is increasing for $z\in(0,1)$.
\end{itemize}
\end{lemma}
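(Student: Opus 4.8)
The plan is to pass to logarithms. Write $p_\gamma:=\tfrac{\tilde k_\gamma}{\tilde k_\gamma-1}$ (so $p_{\textbf{I}}=2n$ and $p_{\textbf{II}}=\tfrac{2nq}{q-n}$) and $P_\gamma:=j_\gamma^{p_\gamma}$, which is positive and smooth on $[1,\infty)$ once $a$ is large. Since $P_\gamma''=\left((\log P_\gamma)''+((\log P_\gamma)')^2\right)P_\gamma$, assertion (1) is equivalent to $(\log P_\gamma)''+((\log P_\gamma)')^2\le0$ on $[1,\infty)$. For (2), the substitution $s=\tilde C_0/z$ sends $z\in(0,1)$ to $s\in(\tilde C_0,\infty)$ with $s$ decreasing in $z$, and turns $zP_\gamma(\tilde C_0/z)$ into $\tilde C_0\,P_\gamma(s)/s$; hence (2) is equivalent to $s\mapsto P_\gamma(s)/s$ being decreasing on $(\tilde C_0,\infty)$, i.e. to $(\log P_\gamma)'(s)\le 1/s$ there. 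Thus the lemma reduces to two differential inequalities for the slowly-growing functions $\log P_\gamma$.

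I would next record the explicit forms. In Case $\textbf{I}$ one has $\bar h^{2n}(w)=w^{2n}l_1^{2n}(w)\cdots l_{k-1}^{2n}(w)l_k^{2r}(w)$, so with $u:=\log(v+a)$, $\log P_{\textbf{I}}(v)=\ell(u)$ where $\ell(u):=2n\log u+\sum_{d=1}^{k}c_d\log l_d(u)$ with $c_d=2n$ for $d<k$ and $c_k=2r$. In Case $\textbf{II}$, $\log P_{\textbf{II}}(v)=m(v+a)$ where $m(w):=\sum_{d=1}^{k}c_d'\log l_d(w)$ with $c_d'=\tfrac{2qn}{q-n}$ for $d<k$ and $c_k'=\tfrac{2qr}{q-n}$. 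The elementary facts I will use, all standard for iterated logarithms: $\ell$ and $m$ are nonnegative combinations of the concave functions $\log(\cdot)$ and $\log l_d(\cdot)$, hence concave; $\ell'$ and $m'$ are sums of positive decreasing functions tending to $0$ at infinity; and, writing $\tfrac{l_d'}{l_d}=\tfrac1{m_d}$ with $m_d(w):=(w+b)(l_1(w)+b)\cdots(l_{d-1}(w)+b)l_d(w)$, the $m_d$ are increasing, satisfy $m_1\le m_2\le\cdots$ on $[1,\infty)$, and obey $m_d'(w)\ge\log(w+b)$ for every $d\ge1$ (because $m_d'/m_d\ge 1/(w+b)$ and $m_d(w)\ge(w+b)\log(w+b)$).

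For Case $\textbf{I}$ the outer logarithm supplies the needed slack: from $u''=-(u')^2$ one gets $(\log P_{\textbf{I}})''=(\ell''(u)-\ell'(u))(u')^2$, so (1) becomes $-\ell''(u)\ge\ell'(u)(\ell'(u)-1)$, true as soon as $\ell'(u)\le 1$ (the left side is positive since $\ell''<0$), and (2) becomes $\ell'(\log(s+a))\le(s+a)/s$, again implied by $\ell'\le1$. Since $\ell'$ is decreasing with limit $0$, there is $u_1=u_1(n,k,b,r)$ with $\ell'\le1$ on $[u_1,\infty)$, so any $a$ with $\log(1+a)\ge u_1$ works (for (2) note $s>\tilde C_0\ge 2$ forces $\log(s+a)>\log(1+a)$). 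For Case $\textbf{II}$ no such free term appears, so for (1) I would argue directly: $m_d'(w)\ge\log(w+b)$ yields $-m''(w)=\sum_d c_d'\tfrac{m_d'}{m_d^2}\ge\log(w+b)\sum_d c_d'\left(\tfrac{l_d'}{l_d}\right)^2$, while Cauchy--Schwarz gives $m'(w)^2\le S\sum_d c_d'\left(\tfrac{l_d'}{l_d}\right)^2$ with $S:=\sum_d c_d'$, whence $-m''(w)\ge S^{-1}\log(w+b)\,m'(w)^2\ge m'(w)^2$ once $\log(w+b)\ge S$; for (2), $m_d\ge m_1$ gives $m'(w)\le S/((w+b)\log(w+b))$, so $m'(s+a)\le 1/s$ whenever $\log(s+a+b)\ge S$. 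Both conditions hold for all $v\ge1$ and $s>\tilde C_0>0$ as soon as $a+b\ge e^{S}$.

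Finally I would take $a_0$ to be the largest of the finitely many thresholds above (plus a harmless bound ensuring $\log(1+a)\ge1$, so all iterated logarithms stay in range). The main obstacle I expect is Case $\textbf{II}$, assertion (1): unlike Case $\textbf{I}$, concavity does not follow merely from $m'$ being decreasing, and one must exploit the sharp gain $m_d'(w)\ge\log(w+b)$ to dominate the quadratic term $m'(w)^2$; the Cauchy--Schwarz constant $S$ is then absorbed by enlarging $a$ so that $\log(w+b)\ge S$ for every $w=v+a$ with $v\ge1$. A secondary subtlety is that the estimate in (2) must hold uniformly over the unbounded interval $(\tilde C_0,\infty)$, which is why I would use the crude bound $m'(w)\le S/((w+b)\log(w+b))$ rather than leading-order asymptotics.
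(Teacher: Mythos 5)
Your proposal is correct, and it is worth recording how it differs from the paper's treatment. For Case $\textbf{I}$ your criterion is in fact the same inequality the paper verifies: writing $P_{\textbf{I}}(v)=\Gamma(\log(v+a))$, the chain rule gives $P_{\textbf{I}}''=(\Gamma''-\Gamma')(\log(v+a))(v+a)^{-2}$, and your condition $\ell''+(\ell')^2\le \ell'$ for $\ell=\log\Gamma$ is exactly $\Gamma''\le\Gamma'$. The difference is in how it is checked: the paper expands $\Gamma'$ and $\Gamma''$ with multiplicative $1+O(l_{k-1}^{-1})$ errors and compares leading terms, whereas you split off the concavity of $\ell$ (so $-\ell''\ge0$) and reduce everything to the clean sufficient condition $\ell'\le1$, which holds for large $a$ since $\ell'$ decreases to $0$; the same condition also handles item (2) uniformly in $s$, without the paper's implicit dependence of the threshold on $\tilde C_0$. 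The more substantial divergence is Case $\textbf{II}$, which the paper dismisses as "identical" but which genuinely is not: there is no outer reparametrization $u=\log(v+a)$ supplying the free $-\ell'$ term, so concavity of $h^{2q/(q-n)}(v+a)$ must be extracted from the iterated-logarithm structure itself. Your argument via $m_d'(w)\ge\log(w+b)$, Cauchy--Schwarz with constant $S=\sum_d c_d'$, and absorption of $S$ by taking $\log(w+b)\ge S$ is a correct and complete way to do this, and it supplies a proof the paper does not actually write down. One cosmetic remark: your threshold in Case $\textbf{II}$ depends on $q$ through $S$, so the constant is really $a_0(n,k,b,r,q)$ there; this imprecision is already present in the paper's own statement and is harmless for the applications.
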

\begin{proof}
Given the explicit definition of the involved functions, this lemma can be checked usig elementary arguments. Let's just look at Case $\textbf{I}$, as Case $\textbf{II}$ can be handled identically. Firstly, for convenience we set

\begin{align}\label{Gamma-formula}
\Gamma(v)=\Gamma_{k,r}(v):=\bar h^{2n}(v)=v^{2n}l_1^{2n}(v)\cdots l_{k-1}^{2n}(v)\cdot l_{k}^{2r}(v),
\end{align}
then 
$$j_{\textbf{I}}^{\frac{\tilde k_{\textbf I}}{\tilde k_{\textbf{I}}-1}}(v)=\Gamma(\log(v+a))$$
By direct computations, we have
$$\Gamma'(v)=2nv^{2n-1}l_1^{2n}(v)\cdots l_{k-1}^{2n}(v)l_k^{2r}(v)\left(1+O(l_{k-1}(v)^{-1})\right),$$
and 
$$\Gamma''(w)=2n(2n-1)v^{2n-2}l_1^{2n}(v)\cdots l_{k-1}^{2n}(v)l_k^{2r}(v)\left(1+O(l_{k-1}(v)^{-1})\right),$$
where a function $A(v)=O(l_{k-1}(v)^{-1})$ means that there is a constant $C(n,k,r)$ and a number $\hat v=\hat v(n,k,r)\ge1$ such that $|A(v)|\le\frac{C(n,k,r)}{l_{k-1}(v)}$ for any $v\ge \hat v$. Therefore, there is a number $v_0=v_0(n,k,r)\ge1$ such that
$$\Gamma''(v)-\Gamma'(v)\le0,\,\,\,\forall \,v\ge v_0.$$
Finally, we conclude that for $a\ge a_0=a_0(n,k,r):=e^{v_0}$,
$$(j_{\textbf{I}}^{\frac{\tilde k_{\textbf I}}{\tilde k_{\textbf{I}}-1}})''(v)=(\Gamma''(\log(v+a))-\Gamma'(\log(v+a)))(v+a)^{-2}\le0,\,\,\,\forall \,v\ge1,$$
namely, $j_{\textbf{I}}^{\frac{\tilde k_{\textbf I}}{\tilde k_{\textbf{I}}-1}}$ is concave on $[1,\infty)$. Item (1) is proved.

To see item (2), we set $\Theta(z):=zj_{\textbf{I}}^{\frac{\tilde k_{\textbf I}}{\tilde k_{\textbf{I}}-1}}\left(\frac{\tilde C_0}{z}\right)$, then using the explicit definitions of the involved functions we easily see that
$$\Theta'(z)=j_{\textbf{I}}^{\frac{\tilde k_{\textbf I}}{\tilde k_{\textbf{I}}-1}}\left(\frac{\tilde C_0}{z}\right)\left(1+O\left(\frac{1}{l_{k-1}(\log(\tilde C_0+a)+b)}\right)\right),$$
where a function $B(z)=O\left(\frac{1}{l_{k-1}(\log(\tilde C_0+a)+b)}\right)$ means that there is a constant $\tilde C(n,k,r)$ such that $|B(z)|\le\frac{C(n,k,r)}{l_{k-1}(\log(\tilde C_0+a)+b)}$ for any $z\in(0,1)$. Therefore, after possibly increasing $a_0=a_0(n,k,r)$, we have that for $a\ge a_0$,
$$\Theta'(z)>0,\,\,\,\forall\,z\in(0,1),$$
which implies the item (2).
\end{proof}

Now we are ready to prove a general version of volume non-collapsing estimate.

\begin{prop}\label{prop-vol-explicit}
In Case $\textbf{I}$, assume the setup of Lemma \ref{lem-concave} with $\Gamma_{k,r}$ defined in \eqref{Gamma-formula}. Then for any $a\ge a_0$, there is a number $c=c_{\textbf I}(a,k,r)$ such that for any $x\in X$ and any $R\in(0,1)$,
$$\underline V_\omega(x,R)\ge\frac{cR^{2n}}{\Gamma_{k,r}(-\log R+1)}.$$
\end{prop}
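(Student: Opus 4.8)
The plan is to derive Proposition~\ref{prop-vol-explicit} by feeding the general volume noncollapsing estimate of Proposition~\ref{prop-vol-general} into the concavity/monotonicity data of Lemma~\ref{lem-concave} and then inverting. In Case~$\textbf{I}$ one has $\tilde k_{\textbf{I}}=\frac{2n}{2n-1}$, so $\frac{\tilde k_{\textbf{I}}}{\tilde k_{\textbf{I}}-1}=2n$, and unravelling the definitions gives $j_{\textbf{I}}^{\,2n}(v)=\bar h^{2n}(\log(v+a))=\Gamma_{k,r}(\log(v+a))$ with $\Gamma_{k,r}$ as in \eqref{Gamma-formula}. Lemma~\ref{lem-concave}(1) says precisely that this function is concave on $[1,\infty)$ once $a\ge a_0$, so Proposition~\ref{prop-vol-general} applies and yields, for all $x\in X$ and $R\in(0,1)$,
\[
\underline V_\omega(x,R)\,\Gamma_{k,r}\!\left(\log\!\left(\tfrac{\tilde C_0}{\underline V_\omega(x,R)}+a\right)\right)\ \ge\ c_{\textbf{I}}R^{2n} .
\]
(Recall $\underline V_\omega(x,R)\in(0,1]$ always.) It remains to remove the $\Gamma_{k,r}(\log(\cdots))$ factor from the left at the price of inserting the explicit quantity $\Gamma_{k,r}(-\log R+1)$ on the right.

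I would carry this out by a monotone-inversion argument. Set $\Theta(z):=z\,\Gamma_{k,r}(\log(\tilde C_0 z^{-1}+a))=z\,j_{\textbf{I}}^{\,2n}(\tilde C_0 z^{-1})$, which by Lemma~\ref{lem-concave}(2) is (strictly) increasing on $(0,1)$, and write $z:=\underline V_\omega(x,R)$. If $z=1$ the desired bound is immediate as soon as $c\le 1$, since then $cR^{2n}\Gamma_{k,r}(-\log R+1)^{-1}\le cR^{2n}\le 1=z$ (using $\Gamma_{k,r}(-\log R+1)\ge 1$ and $R^{2n}\le 1$). Otherwise $z\in(0,1)$, and it suffices to exhibit $c=c(a,k,r)\in(0,1)$ for which the candidate $z_*:=cR^{2n}\,\Gamma_{k,r}(-\log R+1)^{-1}$ lies in $(0,1)$ and satisfies $\Theta(z_*)\le c_{\textbf{I}}R^{2n}$ for every $R\in(0,1)$; for then $\Theta(z)\ge c_{\textbf{I}}R^{2n}\ge\Theta(z_*)$ with $z,z_*\in(0,1)$ and $\Theta$ increasing forces $z\ge z_*$, which is exactly the claim.

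After cancelling $R^{2n}$, the inequality $\Theta(z_*)\le c_{\textbf{I}}R^{2n}$ unwinds to
\[
c\,\Gamma_{k,r}\!\left(\log\!\left(\tilde C_0\,\Gamma_{k,r}(-\log R+1)\,c^{-1}R^{-2n}+a\right)\right)\ \le\ c_{\textbf{I}}\,\Gamma_{k,r}(-\log R+1),
\]
which I would establish from two elementary properties of $\Gamma_{k,r}$, writing $L:=-\log R+1\ge 1$. First, $\Gamma_{k,r}$ grows at most like a fixed power of $L$ times iterated logarithms, so $\log\Gamma_{k,r}(L)\le M_1\log(L+1)+M_1$ for some $M_1=M_1(n,k,r,b)$; since also $R^{-2n}=e^{2n(L-1)}$, the argument of the outer $\Gamma_{k,r}$ above is at most $2n(L-1)+\log\Gamma_{k,r}(L)+\log(\tilde C_0/c)+\log a+1\le M\,L$ for a constant $M=M(n,k,r,b,a,c)$, using $\log(L+1)\le 2L$ and $L\ge 1$. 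Second, $\Gamma_{k,r}$ enjoys the doubling-type bound $\Gamma_{k,r}(ML)\le C(M)\,\Gamma_{k,r}(L)$ for all $L\ge 1$, because $l_1(ML)\le (1+\log M)l_1(L)$ and, inductively, $l_d(ML)\le C_d(M)l_d(L)$ for each $d$. Combining these with the monotonicity of $\Gamma_{k,r}$ gives $\Gamma_{k,r}(\log(\cdots))\le C(M)\,\Gamma_{k,r}(L)$, hence $\Theta(z_*)\le c\,C(M)\,R^{2n}$, and it then remains to choose $c$ small enough that $c\,C(M)\le c_{\textbf{I}}$.

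The one point needing care — and the step I expect to be the main, though entirely elementary, obstacle — is the apparent circularity of this last choice: $C(M)$ depends on $c$ through the term $\log(\tilde C_0/c)$ hidden inside $M$. This is harmless because $C(M)$ is bounded by a fixed power of $M$ (the iterated-log factors $C_d(M)$ are themselves bounded by powers of $M$), so it grows only polylogarithmically in $1/c$; thus $c\,C(M)\to 0$ as $c\to 0$ and a sufficiently small $c=c(a,k,r)$ does the job. All the inequalities above were arranged to hold uniformly for $L\ge 1$ rather than only for $L$ large, so no separate discussion of $R$ near $1$ is needed. Finally, Case~$\textbf{II}$ would follow by running the identical scheme with $j_{\textbf{I}}$ and $\Gamma_{k,r}$ replaced by $j_{\textbf{II}}$ and the corresponding explicit function.
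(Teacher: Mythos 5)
Your proposal is correct and follows essentially the same route as the paper: combine Proposition \ref{prop-vol-general} with Lemma \ref{lem-concave}, then invert via the monotonicity of $\Theta(z)=z\Gamma_{k,r}(\log(\tilde C_0 z^{-1}+a))$ by verifying that $\Theta$ evaluated at the candidate $cR^{2n}/\Gamma_{k,r}(-\log R+1)$ stays below $c_{\textbf I}R^{2n}$. The only difference is that you spell out the ``elementary arguments'' the paper leaves implicit (the doubling bound $\Gamma_{k,r}(ML)\le C(M)\Gamma_{k,r}(L)$ and the resolution of the mild circularity in choosing $c$), and these details check out.
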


\begin{proof}
Combining Proposition \ref{prop-vol-general} and Lemma \ref{lem-concave}, we see that 
\begin{equation}
        \underline{V}_\omega(x,R)
      j_{\textbf{I}}^{\frac{\tilde k_{\textbf I}}{\tilde k_{\textbf{I}}-1}}\left(\frac{\tilde C_0}{\underline{V}_\omega(x,R)}\right)\ge \delta_0 R^{2n}.
      \end{equation}
      Then this proposition directly follows from the monotonicity of $\Theta(z)=zj_{\textbf{I}}^{\frac{\tilde k_{\textbf I}}{\tilde k_{\textbf{I}}-1}}\left(\frac{\tilde C_0}{z}\right)=z\Gamma\left(\log(\frac{\tilde C_0}{z}+a)\right)$ and the following\\ 
\underline{Claim}: there is a number $c=c_{\textbf I}(a,k,r)$ such that for any $R\in(0,1)$,
$$\Theta\left(\frac{cR^{2n}}{\Gamma(-\log R+1)}\right)\le\delta_0 R^{2n},$$
which is equivalent to
$$c\Gamma\left(\log\left(\frac{\tilde C_0\Gamma(-\log R+1)}{cR^{2n}}+a\right)\right)\le\delta_0\Gamma(-\log R+1).$$
Given the explicit definition of $\Gamma$ in \eqref{Gamma-formula}, one checks by using elementary arguments that the last inequality holds for sufficiently small $c=c_{\textbf I}(a,k,r)$, as required.

Proposition \ref{prop-vol-explicit} is proved.
\end{proof}

\begin{rem}
In the above Proposition \ref{prop-vol-explicit}, considering its geometric applications, we have restricted the discussions to Case $\textbf{I}$ (i.e. the $L^p$ volume density case). For Case $\textbf{II}$ (i.e. the $L^1(\log L)^q$ volume density case), an anolog can be carried out using identical arguments; more precisely, given $h$ of the form \eqref{h-form} with parameters $k\ge2$ and $r>n$, then in Case $\textbf{II}$, for sufficiently large $a>1$, there is a number $c=c_{\textbf{II}}(a,k,r)$ such that for any $x\in X$ and any $R\in(0,1)$,
$$\underline V_\omega(x,R)\ge\frac{cR^{\frac{2nq}{q-n}}}{\Gamma_{k-1,r}^{\frac{q}{q-n}}(-\log R+1)}.$$
\end{rem}

\begin{proof}[Proof of Theorem \ref{thmgeom} (a)]
This immediately follows from Proposition \ref{prop-vol-explicit}.
\end{proof}

\section{Sobolev-type inequality: proof of Theorem \ref{thmgeom} (b)}\label{sect-sobolev}
In this section, we fix a K\"ahler metric satisfying the assumption in Theorem \ref{thmgeom}. The main result in this section is the following family of Sobolev-type inequalities, which contains Theorem \ref{thmgeom} (b) as the special case of $d=1$.

\begin{theorem}\label{Sobolev1}
For any $1\leq d<n,q=\frac{n}{n-d}$ and $q'>\frac{2nd}{n-d}$, there exist constants $\theta,C>0$ depending on $n,\alpha,A,p,K,d,q'$ such that
    \begin{equation}
        \int_X N\left(\frac{\theta |u-\bar{u}|^{2d}}{\int_X|\nabla u|^{2d}\frac{\omega^n}{V_\omega}}\right)\frac{\omega^n}{V_\omega}\leq C
    \end{equation}
    for all $u\in W^{1,2d}(X)$ where $\bar{u}=\frac{1}{V_\omega}\int_X u\omega^n$ and $N(t)=\frac{t^q}{\log^{q'}(t+2)}$.
\end{theorem}

To prove this theorem, we need the following lemma, which interpolates items (i) and (ii) in Theorem \ref{thmgreen}.

\begin{lemma}\label{nabla2}
    For any $s\in[0,1]$, there exists a constant $C=C(n,\alpha,A,p,K,h,s)$ such that 
    \begin{equation}
        \sup_{x\in X}\int_X \frac{|\nabla_y \mathcal{G}(x,y)|^{2s}}{\mathcal{G}(x,y)^{\frac{2ns-s-n}{n-1}}\overline{h}^{\frac{n-s}{n-1}}(\log(\mathcal{G}(x,y)V_\omega+1))}\omega^n(y)\leq C V_\omega^{\frac{s-1}{n-1}}.
    \end{equation} 
    where $h$ is the same as in Theorem \ref{thmgreen}.
\end{lemma}
\begin{proof}
    For fixed $x\in X$, we regard $\mathcal{G}(y):=\mathcal{G}(x,y)$ as a function of $y$. The $s=0,1$ cases are exactly the conclusions in Theorem \ref{thmgreen}. For $s\in(0,1)$, applying H\"older's inequality gives
    \begin{equation}
        \begin{aligned}
            &\int_X \frac{|\nabla \mathcal{G}|^{2s}}{\mathcal{G}^{\frac{2ns-s-n}{n-1}}\overline{h}^{\frac{n-s}{n-1}}(\log(\mathcal{G}V_\omega+1))}\omega^n=\int_X\frac{|\nabla\mathcal{G}|^{2s}}{\mathcal{G}^s\overline{h}^s(\log(\mathcal{G}V_\omega+1))}\frac{\mathcal{G}^{\frac{n-ns}{n-1}}}{\overline{h}^{\frac{n-ns}{n-1}}(\log(\mathcal{G}V_\omega+1))}\omega^n\\
            &\leq \left(\int_X\frac{|\nabla\mathcal{G}|^2}{\mathcal{G}\overline{h}(\log(\mathcal{G}V_\omega+1))}\omega^n\right)^s\left(\int_X \frac{\mathcal{G}^{\frac{n}{n-1}}}{\overline{h}^{\frac{n}{n-1}}(\log(\mathcal{G}V_\omega+1))}\omega^n\right)^{1-s}\\
            &\leq CV_\omega^{\frac{s-1}{n-1}}.\nonumber
        \end{aligned}
    \end{equation}     
    Lemma \ref{nabla2} is proved.
\end{proof}

\begin{proof}[Proof of Theorem \ref{Sobolev1}]
    Without loss of generality, we may assume $u\in C^1(X)$. Let $s\in (1/2,1]$ to be determined. We apply the Green's formula associated to the metric $\omega$ to obtain
    \[
        u(x)=\frac{1}{V_\omega}\int_X u\omega^n+\int_X\langle\nabla\mathcal{G}(x,\cdot),\nabla u\rangle\omega^n
    \]
    for any $x\in X$. We regard $\mathcal{G}(y):=\mathcal{G}(x,y)$ as a function of $y$. By the triangle inequality and H\"older's inequality this gives
    \begin{equation}\label{u(x)}
        \begin{aligned}
            &|u(x)-\bar{u}|\leq \int_X |\nabla\mathcal{G}(x,\cdot)||\nabla u|\omega^n\\
          & =\int_X \frac{|\nabla\mathcal{G}|}{\mathcal{G}^{\frac{2ns-n-s}{2s(n-1)}}\overline{h}^{\frac{n-s}{2s(n-1)}}(\log(V_\omega\mathcal{G}+1))}\mathcal{G}^{\frac{2ns-n-s}{2s(n-1)}}\overline{h}^{\frac{n-s}{2s(n-1)}}(\log(V_\omega\mathcal{G}+1))|\nabla u|\omega^n\\
            &\leq\left(\int_X \frac{|\nabla\mathcal{G}|^{2s}}{\mathcal{G}^{\frac{2ns-n-s}{n-1}}\overline{h}^{\frac{n-s}{n-1}}(\log(V_\omega\mathcal{G}+1))}\omega^n\right)^\frac{1}{2s}
            \left(\int_X\mathcal{G}^{\frac{2ns-n-s}{(n-1)(2s-1)}}\overline{h}^{\frac{n-s}{(n-1)(2s-1)}}(\log(V_\omega\mathcal{G}+1))|\nabla u|^{\frac{2s}{2s-1}}\omega^n\right)^{1-\frac{1}{2s}}\\
            &\leq C\left(\int_X(V_\omega\mathcal{G})^{\frac{2ns-n-s}{(n-1)(2s-1)}}\overline{h}^{\frac{n-s}{(n-1)(2s-1)}}(\log(V_\omega\mathcal{G}+1))|\nabla u|^{\frac{2s}{2s-1}}\frac{\omega^n}{V_\omega}\right)^{1-\frac{1}{2s}},\\
        \end{aligned}
    \end{equation}
where in the last inequality we have applied \refl{nabla2}. Now let $s=\frac{d}{2d-1}\in(\frac{n}{2n-1},1]$, $\bar{h}(t)=t^r$ where $r>1$ is to be determined, $\psi(t)=t^{\frac{2ns-n-s}{(n-1)(2s-1)}}\log^{\frac{(n-s)r}{(n-1)(2s-1)}}(t+1)$ and $f(t)=\frac{t^{\frac{n}{n-1}}}{\log^{\frac{n}{n-1}m}(t+a)}$ where $m>1$ is to be determined and $a>0$ is a constant depending on $n,m$ such that $f'(t)\ge0(t>0)$. Then
    \begin{equation}
        \frac{\theta|u(x)-\bar{u}|^{\frac{2s}{2s-1}}}{\int_X|\nabla u|^{\frac{2s}{2s-1}}\frac{\omega^n}{V_\omega}}\leq \frac{\hat C^{-1}}{\int_X|\nabla u|^{\frac{2s}{2s-1}}\frac{\omega^n}{V_\omega}}\left(\int_X \psi\circ f^{-1}(f(V_\omega\mathcal{G}))|\nabla u|^{\frac{2s}{2s-1}}\frac{\omega^n}{V_\omega}\right)
    \end{equation}
    where we have chosen $\theta=C^{-\frac{2s}{2s-1}}\cdot \hat C^{-1}$, and $\hat C$ is the number satisfying \eqref{hatC}. By directly computing, 
    \begin{equation}\label{hatC}
        \psi\circ f^{-1}(t)\leq \hat C t^{\frac{2sn-n-s}{2sn-n}}\log^{\frac{(n-s)r+(2ns-n-s)m}{(n-1)(2s-1)}}(t+1)=:\hat C g(t), (t\ge f(1))
    \end{equation}
    and $g''(t)\le 0$ when $t$ is larger than a constant $b$ which can be calculated directly. By Jensen's inequality, we have
    \begin{equation}
        \begin{aligned}
            \frac{\hat C^{-1}}{\int_X|\nabla u|^{\frac{2s}{2s-1}}\frac{\omega^n}{V_\omega}}&\int_X \psi\circ f^{-1}(f(V_\omega\mathcal{G}))|\nabla u|^{\frac{2s}{2s-1}}\frac{\omega^n}{V_\omega}\\
            \leq&\frac{1}{\int_X|\nabla u|^{\frac{2s}{2s-1}}\frac{\omega^n}{V_\omega}}\int_X g(f(V_\omega\mathcal{G})+b)|\nabla u|^{\frac{2s}{2s-1}}\frac{\omega^n}{V_\omega} \\
            \leq &g\left(\frac{1}{\int_X|\nabla u|^{\frac{2s}{2s-1}}\frac{\omega^n}{V_\omega}}\int_X f(V_\omega\mathcal{G}))|\nabla u|^{\frac{2s}{2s-1}}\frac{\omega^n}{V_\omega}+b\right)
        \end{aligned}
    \end{equation}
    By directly computing,$g^{-1}(t)\ge \frac{t^q}{\log^{q'}(t+3)}-C=:N(t)-C$, where we have chosen $q=\frac{2sn-n}{2sn-n-s}, q'=n\frac{nr-sr+2nsm-nm-sm}{(n-1)(2ns-n-s)}$. Since $s=\frac{d}{2d-1}$ and $m,r>1$ we have $q=\frac{n}{n-d}$ and $q'>\frac{2nd}{n-d}$. Then 
    \begin{equation}\label{N(x)}
        \begin{aligned}
            &N\left( \frac{\theta |u(x)-\bar{u}|^{\frac{2s}{2s-1}}}{\int_X|\nabla u|^{\frac{2s}{2s-1}}\frac{\omega^n}{V_\omega}}\right)\leq g^{-1}\left( \frac{\theta|u(x)-\bar{u}|^{\frac{2s}{2s-1}}}{\int_X|\nabla u|^{\frac{2s}{2s-1}}\frac{\omega^n}{V_\omega}}\right)+C\\
            &\leq \frac{1}{\int_X|\nabla u|^{\frac{2s}{2s-1}}\frac{\omega^n}{V_\omega}}\int_X f(V_\omega\mathcal{G})|\nabla u|^{\frac{2s}{2s-1}}\frac{\omega^n}{V_\omega}+C.
        \end{aligned}
    \end{equation}
    Integrating \refe{N(x)} over $x\in X$ against $\frac{\omega^n}{V_\omega}$ and by the symmetry of $\mathcal{G}(x,y)$, we have 
    \begin{equation}
        \begin{aligned}
            &\int_{x\in X} N\left( \frac{\theta |u(x)-\bar{u}|^{\frac{2s}{2s-1}}}{\int_X|\nabla u|^{\frac{2s}{2s-1}}\frac{\omega^n}{V_\omega}}\right)\frac{\omega^n(x)}{V_\omega}\\
            &\leq\frac{1}{\int_X|\nabla u|^{\frac{2s}{2s-1}}\frac{\omega^n}{V_\omega}}\int_{x\in X}\int_{y\in X} f(V_\omega\mathcal{G}(x,y))|\nabla u(y)|^{\frac{2s}{2s-1}}\frac{\omega^n(y)}{V_\omega}\frac{\omega^n(x)}{V_\omega}+C\\
            &=\frac{1}{\int_X|\nabla u|^{\frac{2s}{2s-1}}\frac{\omega^n}{V_\omega}}\int_{y\in X}|\nabla u(y)|^{\frac{2s}{2s-1}}\frac{\omega^n(y)}{V_\omega}\int_{x\in X} f(V_\omega\mathcal{G}(x,y))\frac{\omega^n(x)}{V_\omega}+C\\
            &\leq C,
        \end{aligned}
    \end{equation}
    where in the last step we have applied Theorem \ref{thmgreen} (i) and the symmetry of $\mathcal G$.
    Noting $d=\frac{s}{2s-1}$, we have completed the proof.
\end{proof}

\begin{rem}
Since, for any given $1\le d<n$, $N(v)\ge c(n,d,p') v^{p'}, \forall v\ge1$, for any $0<p'<\frac{n}{n-d}$, this theorem strengthens \cite[Theorem 4.1]{GPSS3} using a different method without involving Riesz-Thorin Interpolation Theorem.
\end{rem}

\begin{rem}
Basing on Theorem \ref{Sobolev1}, similar to Remark \ref{rem-scale} one can also derive a scale-invariant version of Sobolev-type inequality.
\end{rem}

Note that the critical case $d=n$ is not contained in Theorem \ref{Sobolev1} (nor in \cite[Theorem 4.1]{GPSS3}). In Appendix \ref{app-mto} we prove for this case a version of Moser-Trudinger-Onofri type inequality, which may be of independent interest (but is not used in this paper).

\section{Heat kernel estimate: proof of Theorem \ref{thmgeom} (c)}\label{sect-heat}
With the Sobolev type inequality in \reft{Sobolev1}, we now prove the heat kernel upper bound on diagonal by adapting Cheng-Li's method \cite{CL}. As before, we fix a K\"ahler metric $\omega$ satisfying the assumption in Theorem \ref{thmgeom}, and simply write $H=H_\omega$. Firstly we set
\[\tilde{H}(x,y,t):=H(x,y,t)-\frac{1}{V_\omega}\]
which satisfies $\int_X \tilde{H}(x,y,t)\omega^n(y)=0$ for any $t>0$. Recall that $\tilde{H}$ satisfies the following semi-group property
    \begin{equation}
        \tilde{H}(x,z,t)=\int_X \tilde{H}(x,y,s)\tilde{H}(y,z,t-s)\omega^n(y)
    \end{equation}
    for all $s\in(0,t)$. In particular, setting $s=t/2$ and $x=z$ gives the equation
\[\tilde{H}(x,x,t)=\int_X \tilde{H}(x,y,t/2)^2\omega^n(y).\]
Then by taking $t$-derivative and using the heat equation and Stokes's theorem, we have
\begin{equation}\label{linterpolating}
    \frac{\partial}{\partial t}\tilde{H}(x,x,t)=-\int_X|\nabla_y\tilde{H}(x,y,t/2)|^2_\omega \omega^n(y).
\end{equation}
To proceed, we will need a key observation as follows.
\begin{lemma}
    Let $N(t)=\frac{t^{\frac{n}{n-1}}}{\log^{q'}(t+3)}$, where $q'>\frac{2n}{n-1}$. Then for all function $v$ of finite $\Vert v\Vert_{L^1}$ and $\Vert N(v^2)\Vert_{L^1}$, there holds
    \begin{equation}\label{interpolating}
        \Vert v\Vert_{L^2}\leq C(n,q') \Vert v\Vert^{\frac{1}{n+1}}_{L^1}\Vert N(v^2)\Vert ^{\frac{n-1}{2(n+1)}}_{L^1}\log^{\frac{(n-1)q'}{2(n+1)}}\left(c+\frac{\Vert N(v^2)\Vert_{L^1}}{\Vert v\Vert_{L^1}}\right)
    \end{equation}
     where $\Vert \cdot \Vert_{L^k}=\Vert \cdot \Vert_{L^k(X,\frac{\omega^n}{V_\omega})}$ and $c>2$ is a constant such that $\frac{t^{\frac{1}{n-1}}}{\log^{q'}(t+c)}$ is increasing when $t>0$.
\end{lemma}
\begin{proof}
    For any fixed $\lambda>0$, if $|v|\le \lambda$, then $v^2\leq\lambda|v|$ and if $|v|\ge\lambda$, then $|v|^2=N(v^2)\frac{v^2}{N(v^2)}\le N(v^2)\frac{\log^{q'}(v^2+c)}{(v^2)^{\frac{1}{n-1}}}\leq N(v^2)\log^{q'}\frac{\log^{q'}(\lambda^2+c)}{\lambda^{\frac{2}{n-1}}}$. We have
    \begin{equation}\label{v^2}
        \begin{aligned}
            &\int_X v^2 \frac{\omega^n}{V_\omega}\le \int_{|v|\le\lambda} v^2 \frac{\omega^n}{V_\omega}+\int_{|v|\ge\lambda} v^2 \frac{\omega^n}{V_\omega}\\
            &\le \lambda\Vert v\Vert_{L^1}+\frac{\log^{q'}(\lambda^2+c)}{\lambda^{\frac{2}{n-1}}}\Vert N(v^2)\Vert_{L^1}.
        \end{aligned}
    \end{equation}
    for all $\lambda>0$. We choose $\lambda$ such that $\lambda\Vert v\Vert_{L^1}=\frac{\log^{q'}(\lambda^2+c)}{\lambda^{\frac{2}{n-1}}}\Vert N(v^2)\Vert_{L^1}$. Set $I=\frac{\Vert N(v^2)\Vert_{L^1}}{\Vert v\Vert_{L^1}}$, then 
    \[\frac{\lambda^{\frac{n+1}{n-1}}}{\log^{q'}(\lambda^2+c)}=I\ge\lambda-C\]
    where $C>0$ is a constant. We have
    \[\lambda=I^\frac{n-1}{n+1}\log^{\frac{n-1}{n+1}q'}(\lambda^2+c)\le C I^\frac{n-1}{n+1}\log^{\frac{n-1}{n+1}q'}(I+c).\]
    From \refe{v^2} 
    \begin{equation}
        \Vert v\Vert^2_{L^2}\leq 2\lambda\Vert v\Vert_{L^1}\leq C\Vert v\Vert_{L^1}I^{\frac{n-1}{n+1}}\log^{\frac{(n-1)q'}{n+1}}(I+c)
    \end{equation}
    which implies \refe{interpolating}.
\end{proof}

\begin{theorem}
    Let $q'>\frac{2n}{n-1}$ be a constant. There exists a positive constant $C=C(n,\alpha,A,p,K,q')$ such that for any $x\in X$ 
    \begin{equation}\label{H(x,x,t)}
        H(x,x,t)\le \frac{1}{V_\omega}+\frac{C}{V_\omega}t^{-n}\log^{q'(n-1)}(3+\frac{1}{t}).
    \end{equation}
\end{theorem}
\begin{proof}
    For a fixed $x\in X$ and $t>0$, we consider the function $u(y):=\tilde{H}(x,y,t/2).$ By the definition of $\tilde{H}(x,y,t/2)$, we have $\bar{u}=\frac{1}{V_\omega}\int_X u\omega^n=0$ and $\Vert u\Vert_{L^1{(X,\frac{\omega^n}{V_\omega})}}\le \frac{2}{V_\omega}$. Set $v(y)=\frac{\theta^\frac{1}{2}|u|}{\Vert\nabla u\Vert_{L^2(X,\frac{\omega^n}{V_\omega})}}$, where $\theta$ is the constant in \reft{Sobolev1}. By \reft{Sobolev1} in the case $d=1$, we have
    \[\int_X N(v^2)\frac{\omega^n}{V_\omega}\leq C\]
    where $N(t)=\frac{t^{\frac{n}{n-1}}}{\log^{q'}(t+3)}$, $q'>\frac{2n}{n-1}$. Applying \refl{linterpolating}, we get
    \begin{equation}
        \begin{aligned}
            \Vert v\Vert_{L^2}\leq C \Vert v\Vert_{L^1}^{\frac{1}{n+1}}\log^{\frac{(n-1)q'}{2(n+1)}}\left(d+\frac{1}{\Vert v\Vert_{L^1}}\right).
        \end{aligned}
    \end{equation}
    where $d>c$ satisfying $t^{\frac{1}{n+1}}\log^{\frac{(n-1)q'}{2(n+1)}}(d+\frac{1}{t})$ is increasing when $t>0$. By the definition of $v$ and properties of the heat kernel recalled above, we know
    \[\Vert v\Vert_{L^1}\le\frac{C}{\Vert\nabla u\Vert_{L^2}V_\omega}=C(-V_\omega\frac{\partial}{\partial t}\tilde{H}(x,x,t))^{-\frac{1}{2}}\]
    and
    \[\Vert v\Vert_{L^2}=\theta^{1/2}\tilde{H}(x,x,t)^{1/2}/(-\frac{\partial}{\partial t}\tilde{H}(x,x,t))^{1/2}.\]
    Then we have 
    \begin{equation}
        V_\omega\tilde{H}(x,x,t) \le C(-V_\omega\frac{\partial}{\partial t}\tilde{H}(x,x,t))^{\frac{n}{n+1}}\log^{\frac{q'(n-1)}{n+1}}(d+1-V_\omega\frac{\partial}{\partial t}\tilde{H}(x,x,t))
    \end{equation}
    which implies that 
    \begin{equation}
        -V_\omega \frac{\partial}{\partial t}\tilde{H}(x,x,t)\ge C'(V_\omega\tilde{H}(x,x,t))^{\frac{n+1}{n}}\log^{-q'\frac{n-1}{n}}(d'+V_\omega\tilde{H}(x,x,t)).
    \end{equation}
    Noting that $\tilde{H}(x,x,t)\to+\infty$ as $t\to0^+$, we have 
    \begin{equation}
        V_\omega\tilde{H}(x,x,t)\leq Ct^{-n}\log^{(n-1)q'}(3+\frac{1}{t}).
    \end{equation}
    which implies the upper bound of $H(x,x,t)$:
    \[ H(x,x,t)\le \frac{1}{V_\omega}+\frac{C}{V_\omega}t^{-n}\log^{q'(n-1)}(3+\frac{1}{t}).\]
\end{proof}

\section{Eigenvalue and eigenfunction estimates: proof of Theorem \ref{thmgeom} (d-e)}\label{sect-eigen}
Let $\lambda_k$ and $\phi_k$ are eigenvalues and eigenfunctions $-\Delta_\omega$ as chosen in the introduction section. 
\begin{theorem}
Let $q'>\frac{2n}{n-1}$ be a constant. There exists $c=c(n,\alpha,A,p,K,q')>0$ such that for any $k\in\mathbb{N}$,
    \[\lambda_k\ge c \frac{k^{1/n}}{\log^{\frac{n-1}{n}q'}(2+k)}.\]
\end{theorem}
\begin{proof}
It is a classical fact that $\{\phi_k\}$ is an orthonormal basis of $L^2(X,\omega^n)$, $\phi_0=1/\sqrt{V_\omega}$ and the heat kernel $H(x,y,t)$ can be expanded as
    \begin{equation}
        H(x,y,t)=\sum_{j=0}^{\infty}e^{-\lambda_j t}\phi_j(x)\phi_j(y)=\frac{1}{V_\omega}+\sum_{j=1}^{\infty}e^{-\lambda_j t}\phi_j(x)\phi_j(y).\nonumber
    \end{equation}
    The upper bound \refe{H(x,x,t)} implies that
    \begin{equation}\label{expand hk}
        \sum_{j=1}^{\infty}e^{-\lambda_j t}\phi_j(x)^2\leq \frac{C}{V_\omega}t^{-n}\log^{(n-1)q'}(2+\frac{1}{t}).
    \end{equation}
    Integrating \refe{expand hk} over $X$ gives
    \begin{equation}\label{eigenvalues}
        \sum_{j=1}^{\infty}e^{-\lambda_jt}\le Ct^{-n}\log^{(n-1)q'}(2+\frac{1}{t})
    \end{equation}
    For any given $k\ge1$, setting $t=1/\lambda_k$ in \refe{eigenvalues} implies
    \[C\lambda_k^n\log^{(n-1)q'}(3+\lambda_k)\ge\sum_{j=1}^{\infty}e^{-\lambda_j/\lambda_k}\ge\sum_{j=1}^{k}e^{-\lambda_j/\lambda_k}\ge e^{-1}k.\]
    Hence we conclude that there is a positive constant $c$ such that
    \[\lambda_k\ge c\frac{k^{1/n}}{\log^{\frac{n-1}{n}q'}(3+k)}.\]
\end{proof}

\begin{theorem}
Let $q'>\frac{2n}{n-1}$ be a constant. There exists $C=C(n,\alpha,A,p,K,q')>0$ such that for any $k\in\mathbb{N}$,
    \[|\phi_k|_{L^\infty(X)}^2\le CV_\omega^{-1} \lambda_k^{n}\log^{\frac{n-1}{n}q'}(3+\lambda_k).\]
\end{theorem}
\begin{proof}
Again using \eqref{expand hk} with $t=\frac{1}{\lambda_k}$ we have, for any $x\in X$,
$$\phi_k(x)^2\le e\sum_{j=1}^{\infty}e^{-\lambda_j\cdot\frac{1}{\lambda_k}}\phi_j(x)^2\leq \frac{C}{V_\omega}\lambda_k^n\log^{\frac{n-1}{n}q'}(3+\lambda_k),\,\,\forall x\in X,$$
which finishes the proof immediately.
\end{proof}

\section{Applications to the K\"ahler-Ricci flow}\label{sect-vol-pf}
We are ready to prove Theorems \ref{thm-infinite} and \ref{thm-finite} on the K\"ahler-Ricci flow.

\begin{proof}[Proof of Theorem \ref{thm-infinite}]
Let $\omega(t)$, $t\in[0,\infty)$ be the long-time solution to \eqref{KRF1} starting from an initial K\"ahler metric $\omega_0$ on an $n$-dimensional minimal K\"ahler manifold. Fix a smooth closed real $(1,1)$-form $\chi$ representing $2\pi c_1(K_X)$ and set $\omega_t:=e^{-t}\omega_0+(1-e^{-t})\chi$. Then it is well-known that $[\omega(t)]=[\omega_t]$ and hence $\omega_t\le A\omega_0$ for some positive number $A=A(X,\omega_0)$, which is independent in $t$. Then by Tian's $\alpha$-invariant \cite{Ti}, we can choose two positive numbers $\alpha=\alpha(X,\omega_0)$ and $A'=A'(X,\omega_0,\alpha)$, which are independent in $t$, such that
\begin{equation}\label{krf-alpha}
\int_{X}e^{\alpha(\sup_X\psi-\psi)}\frac{\omega_0^n}{V_{\omega_0}}\le A',\,\,\,\forall \psi\in PSH(X,\omega_t).
\end{equation}
Moreover, we also have a positive number $B=B(X,\omega_0)$ such that for all $t\ge0$,
\begin{equation}
|f_{\omega(t)}|_{L^\infty(X)}\le B, \,\,\,where \,\,\, f_{\omega(t)}:=\frac{\omega(t)^n/V_{\omega(t)}}{\omega_0^n/V_{\omega_0}},
\end{equation}
thanks to \cite[Lemma 4.3]{GS} (or \cite[Lemma 9.2]{GPSS1}). Therefore, we can apply Proposition \ref{prop-vol-explicit} to finish the proof of Theorems \ref{thm-infinite}.
\end{proof}

\begin{proof}[Proof of Theorem \ref{thm-finite}]
The proof is identical as the above proof of Theorem \ref{thm-infinite}, as in the setting of Theorem \ref{thm-finite}, the K\"ahler-Ricci flow $\omega(t)$, $t\in[0,T)$, solving \eqref{KRF2} also satisfies

\begin{equation}
|f_{\omega(t)}|_{L^\infty(X)}\le B, \,\,\,where \,\,\, f_{\omega(t)}:=\frac{\omega(t)^n/V_{\omega(t)}}{\omega_0^n/V_{\omega_0}},
\end{equation}
thanks to \cite[Lemma 7.1]{SW} (or \cite[Lemma 8.1]{GPSS1}). Therefore, we can apply Proposition \ref{prop-vol-explicit} to finish the proof of Theorem \ref{thm-finite}.
\end{proof}

\subsection{A generalization to non-minimal K\"ahler manifolds}\label{tkrf} 
In this subsection, we present a volume noncollapsing property for a twisted K\"ahler-Ricci flow, which contains Theorem \ref{thm-infinite} as a special case. Let $(X,\omega_X)$ be an $n$-dimensional compact K\"ahler manifold, and consider the K\"ahler-Ricci flow starting from an initial K\"ahler metric $\omega_0$ on $X$,
\begin{equation}\label{KRF3}
    \begin{cases}
       \partial_t\omega(t)=
        -\mathrm{Ric}(\omega(t)),\\
        \omega(0)=\omega_0,
    \end{cases}
\end{equation}
which exists a smooth solution for $t\in[0,T)$, where 
\begin{equation}\label{eq-T}
T:=\sup\{t>0|[\omega_0]+t\cdot 2\pi c_1(K_X)>0\}\in(0,+\infty],
\end{equation}
thanks to \cite{C,TZ,Ts}. If $T<+\infty$, namely $X$ is not minimal, it is conjectured by Song-Tian \cite{ST3} that the K\"ahler-Ricci flow $(X,\omega(t))$ should deform $X$ to a compact metric space naturally associated to $X$ and $[\omega_0]$. For the global volume noncollapsing case, important progresses can be found in \cite{SW1,GPSS1}; while the global volume collapsing case keeps largely open in general (see \cite{F,JST,SW2,SSW} for related works when $[\omega_0]$ is rational and of Calabi symmetry). Here we consider an alternative to deforem $X$ to a compact metric space naturally associated to $X$ and $[\omega_0]$, as proposed in \cite[Section 7]{Zys1}. Precisely, we consider the following twisted K\"ahler-Ricci flow starting from $\omega_0$:
\begin{equation}\label{KRF4}
    \begin{cases}
       \partial_t\omega(t)=
        -\mathrm{Ric}(\omega(t))-\omega(t)+\frac{1}{T}\omega_0,\\
        \omega(0)=\omega_0,
    \end{cases}
\end{equation}
which exists a smooth long-time solution for $t\in[0,\infty)$. When $T=\infty$, i.e. $X$ is minimal, the flow \eqref{KRF4} is exactly the one in \eqref{KRF1}. In general, the K\"ahler class $[\omega(t)]=e^{-t}[\omega_0]+(1-e^{-t})(\frac{1}{T}[\omega_0]+2\pi c_1(K_X))$ along \eqref{KRF4} converges to the class $\frac{1}{T}[\omega_0]+2\pi c_1(K_X)$, which is proportional to the limit class $[\omega_0]+T\cdot 2\pi c_1(K_X)$ of the original K\"ahler-Ricci flow \eqref{KRF3} when $T<\infty$, and is $2\pi c_1(K_X)$ when $T=\infty$. 

We now state the main result in this section, which could be regarded as a generalization of Theorem \ref{thm-infinite} on minimal K\"ahler manifolds to not minimal ones.

\begin{theorem}\label{thm-vol-tkrf}
Let $(X,\omega_X)$ be an $n$-dimensional compact K\"ahler manifold, $\omega_0$ a K\"ahler metric on $X$ and $T\in(0,+\infty]$ is defined as in \eqref{eq-T}. Let $\omega(t),\,t\in[0,\infty)$ be the solution to the twisted K\"ahler-Ricci flow \eqref{KRF4}. Then for any integer $k\ge2$ and number $r>n$, there exists a positive number $c=c(X,\omega_0,k,r)$ such that for any $t\in[0,\infty)$, $x\in X$ and $R\in(0,1]$, there holds
\begin{equation}\label{vol-est-tkrf}
\frac{Vol_{\omega(t)}(B_{\omega(t)}(x,R))}{Vol_{\omega(t)}(X)}\ge \frac{cR^{2n}}{\Gamma_{k,r}(-\log R+1)}.
\end{equation}
In particular, for any $\epsilon>0$, there exists a positive number $\tilde c=\tilde c(X,\omega_0,\epsilon)$ such that for any $t\in[0,\infty)$, $x\in X$ and $R\in(0,1]$, there holds
\begin{equation}
\frac{Vol_{\omega(t)}(B_{\omega(t)}(x,R))}{Vol_{\omega(t)}(X)}\ge \frac{\tilde cR^{2n}}{(-\log R+1)^{2n+\epsilon}}.\nonumber
\end{equation}
Consequently, the diameter of $(X,\omega(t))$ is uniformly bounded, and $(X,\omega(t))$ subsequently converges to a compact metric space in Gromov-Hausdorff topology.
\end{theorem}
\begin{proof}
It suffices to check that $f_{\omega(t)}:=\frac{\omega(t)^n/V_{\omega(t)}}{\omega_0^n/V_{\omega_0}}$ is uniformly bounded on $X\times[0,\infty)$. This can be done by the same argument in \cite{GS,GPSS1}. For convenience we contain some details here. Denote the numerical Kodaira dimension of $\frac{1}{T}[\omega_0]+2\pi c_1(K_X)$ by $\kappa$, then one easily has that (see e.g. \cite[Lemma 9.1]{GPSS1}) there is a uniform number $C=C(X,[\omega_0])\ge1$ such that for any $t\ge0$,
\begin{equation}\label{eq-tkrf-1}
C^{-1}e^{-(n-\kappa)t}\le V_{\omega(t)}\le Ce^{-(n-\kappa)t}.
\end{equation}
Fix a smooth representative $\chi$ of $\frac{1}{T}[\omega_0]+2\pi c_1(K_X)$ and then a smooth volume form $\Omega$ on $X$ with $\sqrt{-1}\partial\bar\partial\log\Omega=\chi-\frac{1}{T}\omega_0$ and $\int_X\Omega=1$. Then we can uniquely write $\omega(t)=\omega_t+\sqrt{-1}\partial\bar\partial\varphi(t)$, where $\omega_t:=e^{-t}\omega_0+(1-e^{-t})\chi$ and $\varphi(t)$ solves
\begin{equation}
e^{(n-\kappa)t}(\omega_t+\sqrt{-1}\partial\bar\partial\varphi(t))^n=e^{\partial_t\varphi(t)+\varphi(t)}\Omega,\,\,\,\,\,\varphi(0)=0.
\end{equation}
Then we can identically apply the maximum principle argument in \cite[Lemma 4.3]{GS} (or \cite[Lemma 9.2]{GPSS1}) to prove that $\partial_t\varphi(t)+\varphi(t)$ is uniformly bounded from above on $X\times[0,\infty)$. Therefore, combining with \eqref{eq-tkrf-1}, we see that $f_{\omega(t)}$ is uniformly bounded from above on $X\times[0,\infty)$. Now, as before, we can apply Proposition \ref{prop-vol-explicit} to finish the proof of the volume noncollapsing estimate \eqref{vol-est-tkrf}.

Given \eqref{vol-est-tkrf}, the second part of Theorem \ref{thm-vol-tkrf} immediately follows from Gromov's precompactness (see e.g. \cite[Section 6, Remark]{GPSS1} for details).

Theorem \ref{thm-vol-tkrf}  is proved.
\end{proof}

\section{Applications to K\"ahler family}\label{sect-family}

Let $\mathcal X\to 2\mathbb D$ be a K\"ahler family and $\beta$ a relative K\"ahler form on $\mathcal X$ as defined in subsection \ref{subsect-family}. Fix four numbers $p>1$, $q>n$, $A>0$ and $B>0$, we introduce two sets of relative K\"ahler forms on $\mathcal X$ as follows, according to two different integrability conditions on volume density. For a relative K\"ahler form $\omega$ on $\mathcal X$, set $f_s=f_{\omega_s}:=\frac{\omega_s^n/V_{\omega_s}}{\beta_s^n/V_{\beta_s}}$.
\begin{itemize}
\item[(I)] Set $\mathcal L(\mathcal X,p,A,B)$ denote the set of all relative K\"ahler form $\omega$ on $\mathcal X$ such that for each $s\in2\mathbb D^*$, there hold that 
$$[\omega_s]\le A[\beta_s]\,\,\, in\,\,\, H^{1,1}(X_s,\mathbb R)\,\,\, and \,\,\,\int_{X_s}f_s^p\frac{\beta_s^n}{V_{\beta_s}}\le B.$$

\item[(II)] Set $\mathcal N(\mathcal X,q,A,B)$ denote the set of all relative K\"ahler form $\omega$ on $\mathcal X$ such that for each $s\in2\mathbb D^*$, there hold that 
$$[\omega_s]\le A[\beta_s]\,\,\, in\,\,\, H^{1,1}(X_s,\mathbb R)\,\,\, and \,\,\,\int_{X_s}f_s|\log f_s|^q\frac{\beta_s^n}{V_{\beta_s}}\le B.$$
\end{itemize}

Elementarily, for any $q'>n$, $\mathcal L(\mathcal X,p,A,B)$ is contained in $\mathcal N(\mathcal X,q',A,B')$ for some $B'$. Both the above integrability conditions on volume density are special cases in Kolodziej's work \cite{K} (see the following metric set (III)), and they naturally appeared in various geometric problems, see \cite{DGG,ST3,To} etc..
In general, we also consider the following set of relative K\"ahler forms with a general integrability condition on volume form, which has been intruduced and studied in Kolodziej's work \cite{K} (also see \cite{GGZ,Liuj}):
\begin{itemize}
\item[(III)] For a K-function $\Phi:\mathbb R_+\to\mathbb R_+$, let $\mathcal N(\mathcal X,\Phi,A,B)$ denote the set of all relative K\"ahler form $\omega$ on $\mathcal X$ such that for each $s\in2\mathbb D^*$, there hold that 
$$[\omega_s]\le A[\beta_s]\,\,\, in\,\,\, H^{1,1}(X_s,\mathbb R)\,\,\, and \,\,\,\int_{X_s}\Phi(\log (f_{s}+1))f_s\frac{\beta_s^n}{V_{\beta_s}}\le B.$$

\end{itemize}
For example, choosing $\Phi=\Phi(v)$ to be  $e^{\epsilon v},\,\epsilon>0$, and $|v|^q,\,q>n$, gives the integrability conditions equivalent to those in (I) and (II), respectively.

In the applications of Theorem \ref{Linfty}, we have to verify two assumptions (H1) and (H2). For a fixed manifold case, the assumption (H1) automatically holds if $[\theta]\le C[\omega_X]$ in $H^{1,1}(X,\mathbb R)$, thanks to Tian's $\alpha$-invariant \cite{Ti}. In \cite[Theorem 1.8]{GT}, Guedj-T\^o proved that the assumption (H1) uniformly holds for the relative K\"ahler forms in $\mathcal L(\mathcal X,p,A,B)$ on a K\"ahler family $\mathcal X$. Here, by simply modifying their argument, we present an extension to the set $\mathcal N(\mathcal X,\Phi,A,B)$.
\begin{theorem}\label{thm-alpha}
Fix two numbers $A>0$ and $B>0$, and a K-function $\Phi$. Let $\mathcal N(\mathcal X,\Phi,A,B)$ be as defined as above. There exist two positive numbers $\alpha=\alpha(n,\Phi,A,B)$ and $C=C(n,\Phi,A,B,\alpha)$ such that for each $\omega\in\mathcal N(\mathcal X,\Phi,A,B)$, $s\in\mathbb D^*$ and $\varphi_t\in PSH(X_t,\omega_t)$, there holds
$$\int_{X}e^{\alpha(\sup_X\psi_t-\psi_t)}\frac{\omega_X^n}{V_{\omega_X}}\le C.$$
\end{theorem}
\begin{proof}
The proof is the same as \cite[Theorem 1.8]{GT}, the only difference is that the role of \cite[Theorem 1.9]{DGG} therein is now replaced by the above Theorem \ref{Linfty}.
\end{proof}

As an immediate consequence of Theorems \ref{Linfty} and \ref{thm-alpha}, we also have the following extension of \cite[Theorem 1.9]{GT}.
\begin{theorem}\label{Linfty-family}
Fix two numbers $A>0$ and $B>0$, and a K-function $\Phi$, and let $\mathcal N(\mathcal X,\Phi,A,B)$ be as defined as above and $\omega\in\mathcal N(\mathcal X,\Phi,A,B)$. For arbitrary K-function $\tilde\Phi$ and positive number $\tilde B$, there is a positive number $C=C(n,A,\Phi,B,\tilde\Phi,\tilde B)$ satisfying the following: for any $s\in \mathbb D^*$, if $\psi_s$ is the unique solution to
$$V_{\omega_s}^{-1}(\omega_s+\sqrt{-1}\partial\bar\partial\psi_s)^n=h_s\frac{\beta_s^n}{V_{\beta_s}},\,\,\sup_{X_s}\psi_s=0,$$
with $\int_{X_s}\tilde\Phi(\log(h_s+1))h_s\frac{\beta_s^n}{V_{\beta_s}}\le \tilde B$, then 
$$|\psi_s|_{L^\infty(X_s)}\le C(n,A,\Phi,B,\tilde\Phi,\tilde B).$$
\end{theorem}

Thanks to \cite{GPSS1,GPSS2} (also see \cite{GT,Liuj,NV}), we have the uniform lower bound and $L^1$-bound of Green's function under $L^1(\log L)^q$ condition.

\begin{theorem}\cite{GPSS1,GPSS2}\label{int}
Let $\mathcal X\to 2\mathbb D$ be a K\"ahler family and $\beta$ a relative K\"ahler form on $\mathcal X$ as defined above, and fix three numbers $q>n$, $A>0$ and $B>0$. There exists $C_0=C_0(n,q,A,B)>0$  such that for each $s\in\mathbb D^*$, $x\in X_s$ and $\omega\in\mathcal N(\mathcal X,q,A,B)$, there holds
$$-V_{\omega_s}\inf_{X_s}G_{\omega_s}(x,\cdot)+\int_{X_s}|G_{\omega_s}(x,\cdot)|\omega_s^n\le C_0(n,q,A,B).$$
\end{theorem}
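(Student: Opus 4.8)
The statement is Guo-Phong-Song-Sturm's; the plan is to reconstruct their auxiliary complex Monge-Amp\`ere argument, feeding in the uniform $L^\infty$ estimate of Section~\ref{sect-Linfty} so that the constant depends only on $n,q,A,B$. The first step is a reduction. Since $G:=G_{\omega_s}(x,\cdot)$ has its only singularity (a $+\infty$) at $x$, the infimum $m_s:=\inf_{X_s}G$ is a finite nonpositive number, and $\int_{X_s}G\,\omega_s^n=0$ gives $\int_{X_s}G_-\,\omega_s^n=\int_{X_s}G_+\,\omega_s^n$ with $0\le G_-\le-m_s$, whence $\int_{X_s}|G|\,\omega_s^n\le-2m_sV_{\omega_s}$ and the left-hand side of the asserted inequality is $\le3(-V_{\omega_s}m_s)$. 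So it suffices to prove $-V_{\omega_s}\inf_{X_s}G_{\omega_s}(x,\cdot)\le C_0(n,q,A,B)$.

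Fix $s$, $x$, write $\gamma:=G_{\omega_s}(x,\cdot)$, $m:=\inf_{X_s}\gamma<0$ and $V:=V_{\omega_s}$. Given a nonnegative density $g(\gamma)$ of total mass $V$, let $\psi$ solve $(\omega_s+\sqrt{-1}\partial\bar\partial\psi)^n=g(\gamma)\,\omega_s^n$ with $\sup_{X_s}\psi=0$. Combining the AM-GM inequality $\Delta_{\omega_s}\psi+n=\mathrm{tr}_{\omega_s}(\omega_s+\sqrt{-1}\partial\bar\partial\psi)\ge n\,g(\gamma)^{1/n}\ge0$ with the Green representation formula, which since $\int_{X_s}\gamma\,\omega_s^n=0$ can be rewritten as
\[
\psi(x)-\frac1V\int_{X_s}\psi\,\omega_s^n=-\int_{X_s}(\gamma-m)\,\bigl(\Delta_{\omega_s}\psi+n\bigr)\,\omega_s^n-mnV,
\]
and using that $(\gamma-m)(\Delta_{\omega_s}\psi+n)\ge0$ together with the trivial observation that the left side is $O(|\psi|_{L^\infty(X_s)})$, one arrives at
\[
n\int_{X_s}(\gamma-m)\,g(\gamma)^{1/n}\,\omega_s^n\ \le\ (-m)nV+2|\psi|_{L^\infty(X_s)} .
\]
If $g$ is chosen to put its mass where $\gamma-m$ is large (i.e.\ near $x$, where $\gamma\to+\infty$) while keeping its $L^1(\log L)^q$-type norm, equivalently hypothesis (H2) of Theorem~\ref{Linfty}, controlled, then the left integral strictly dominates $\int_{X_s}(\gamma-m)\,\omega_s^n=(-m)V$, and, with hypothesis (H1) supplied uniformly by Theorem~\ref{thm-alpha} and an $L^\infty$ bound on $\psi$ supplied by Theorem~\ref{Linfty} (in its family form, Theorem~\ref{Linfty-family}), the displayed inequality turns into a genuine bound on $-mV=-V_{\omega_s}\inf_{X_s}G_{\omega_s}(x,\cdot)$.

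The difficulty I anticipate is that a single such auxiliary equation does not by itself close the argument: the margin by which the weighted integral exceeds $(-m)V$, and the dependence of $|\psi|_{L^\infty(X_s)}$ on the norm of $g(\gamma)$, interact unfavourably, and this is exactly the obstacle that Guo-Phong-Song-Sturm (and, in the family setting, Guedj-T\^o) overcame by replacing the single maximum-principle step with a De Giorgi-type iteration. Concretely I would run the iteration directly on the distribution function $w\mapsto\int_{\{\gamma<-w\}}\omega_s^n$ (or a weighted variant) of $\gamma$, in the spirit of the proof of Theorem~\ref{Linfty}: at each level $w$ one solves a Monge-Amp\`ere equation whose density is a normalized ramp $\propto(-\gamma-w)_+$ supported on $\{\gamma<-w\}$, applies the identity and inequality above on that sublevel set, and obtains a recursion of the form $v\,\phi(w+v)\le C\,\phi(w)^{1+\varepsilon}$ with $C$ uniform in $s$ by the sharp form of Theorem~\ref{Linfty}; De Giorgi's lemma then forces $\phi(w)=0$ for $w$ past a fixed level, i.e.\ $-\inf_{X_s}G_{\omega_s}(x,\cdot)\le C_0(n,q,A,B)/V_{\omega_s}$. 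The two points needing the most care are keeping the constant $C$ in the recursion independent of the background form $\beta_s$ (which is precisely what the dependence-tracking in Section~\ref{sect-Linfty} delivers) and controlling, uniformly over the family, the local contribution near $x$ to the $L^1(\log L)^q$-norm of the ramp density, which involves the (locally Euclidean) singularity profile of $\gamma$.
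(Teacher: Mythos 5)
Your overall strategy is the one the paper itself invokes: Theorem~\ref{int} is proved in the paper by a one-sentence reduction to \cite[Lemmas 5.1, 5.4]{GPSS1} and \cite[Proposition 2.1]{GPSS2}, with the only genuinely new ingredient being the family-uniform $L^\infty$ estimates of Theorems~\ref{Linfty}, \ref{thm-alpha} and \ref{Linfty-family}; you correctly identify that input, your reduction of the full statement to bounding $-V_{\omega_s}\inf G_{\omega_s}(x,\cdot)$ is valid, and your Green-representation identity and the resulting inequality
$n\int_{X_s}(\gamma-m)g(\gamma)^{1/n}\omega_s^n\le(-m)nV+2|\psi|_{L^\infty}$
are correctly derived.

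The gap is that this inequality cannot drive the iteration you then propose, and you never supply what replaces it. The unknown $-m$ sits on the right-hand side with the full weight $nV$, so the inequality only yields information if the left side exceeds $(1+\delta)(-m)nV$; since $\int_{X_s}g(\gamma)\omega_s^n=V$, H\"older gives $\int g^{1/n}\omega_s^n\le V$, and making $g^{1/n}$ large where $\gamma-m$ is large means concentrating unit mass near $x$, where $\int_{B(x,r)}\gamma\,\omega_s^n\sim r^2\to0$ kills the gain --- so no single choice of $g$ produces a margin $\delta>0$. Worse, the ramp densities $\propto(-\gamma-w)_+$ of your De Giorgi scheme are supported on $\{\gamma<-w\}$, i.e.\ precisely where $\gamma-m$ is \emph{small} (and far from $x$, since $\gamma\to+\infty$ there --- your worry about the ``local contribution near $x$'' to their $L^1(\log L)^q$ norm is misplaced; the real issue is that $\|(-\gamma-w)_+\|_{L^\infty}=-m-w$ is the unknown itself, so the entropy of the normalized ramp is a priori uncontrolled), which makes the left side of your inequality even smaller and the recursion $v\,\phi(w+v)\le C\phi(w)^{1+\varepsilon}$ unobtainable from it. The De Giorgi comparison in the proof of Theorem~\ref{Linfty} rests on a pointwise maximum-principle inequality for a function solving a Monge--Amp\`ere equation; $G(x,\cdot)$ only satisfies $\Delta_{\omega_s}G=V^{-1}$ away from $x$ and is not $\omega_s$-psh, so that mechanism is unavailable, and the cited GPSS proofs instead bootstrap (first the $L^1$ bound via a density with controlled entropy, then the infimum bound, using an auxiliary Laplacian equation in addition to the Monge--Amp\`ere one). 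As written, your argument is circular at exactly the step where the cited proofs do their work.
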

\begin{proof}
Though we are in the K\"{a}hler family setting, one can repeat the arguments in \cite[Lemmas 5.1 and 5.4]{GPSS1} and \cite[Proposition 2.1]{GPSS2} to prove Theorem \ref{int}, given the uniform $L^\infty$ estimates on a K\"{a}hler family in Theorems \ref{Linfty}, \ref{thm-alpha} and \ref{Linfty-family}.
\end{proof}

\begin{proof}[Proof of Theorem \ref{thmgreen-family}]
As we have prepared Theorems \ref{thm-alpha},\ref{Linfty-family} and \ref{int}, we can directly apply Theorem \ref{thmgreen} to conclude Theorem \ref{thmgreen-family}.
\end{proof}

\appendix

\section{A Moser-Trudinger-Onofri type inequality}\label{app-mto}

\begin{theorem}\label{mto}
    Let $\omega$ be a K\"ahler metric $\omega$ satisfying the assumption in Theorem \ref{thmgeom}. For any $r>1$, there exist constants $\theta,C>0$ which depend on $n,\alpha,A,p,K,r$ such that
    \begin{equation}
       \int_X J\left(\frac{\theta|u-\bar{u}|}{|\nabla u|_{L^{2n}(X,\frac{\omega^n}{V_\omega})}}\right) \frac{\omega^n}{V_\omega}\leq C
    \end{equation}
    for all $u\in W^{1,2n}(X)$ where $\bar{u}=\frac{1}{V_\omega}\int_X u\omega^n$ and $J(t)=\exp\{t^{\frac{1}{r}}\}$.
\end{theorem}
\begin{proof}
    Without loss of generality, we may assume $u\in C^1(X)$. Taking $s=\frac{n}{2n-1}$ and from \refe{u(x)} we have
    \begin{equation}\label{mto-u(x)}
        |u(x)-\bar{u}|\leq C\left(\int_X\overline{h}^{2n}(\log(V_\omega\mathcal{G}+1))|\nabla u|^{2n}\frac{\omega^n}{V_\omega}\right)^{\frac{1}{2n}},
    \end{equation}
    where $\overline{h}(t)=t^r$ with $r>1$ is a constant and $\mathcal G=\mathcal G_\omega(x,\cdot)$. Elementarily, $\psi(t)=\log^{2nr}(t+1)$ satisfies $\psi''(t)\leq 0$ when $t$ is larger than a constant $b$. Then by \eqref{mto-u(x)} and Jensen's inequality,
    \begin{equation}\label{u(x)2}
        \begin{aligned}
            &\frac{\theta^{2n}|u(x)-\bar{u}|^{2n}}{\int_X|\nabla u|^{2n}\frac{\omega^n}{V_\omega}}\leq \frac{1}{\int_X|\nabla u|^{2n}\frac{\omega^n}{V_\omega}}\int_X \psi(V_\omega\mathcal{G}+b)|\nabla u|^{2n}\frac{\omega^n}{V_\omega}\\
            &\le \psi\left(\frac{1}{\int_X|\nabla u|^{2n}\frac{\omega^n}{V_\omega}}\int_X V_\omega\mathcal{G}|\nabla u|^{2n}\frac{\omega^n}{V_\omega}+b\right)
        \end{aligned}
    \end{equation}
    where we have chosen $\alpha=1/C$. Taking $\psi^{-1}$ on both sides of \refe{u(x)2} we obtain
    \begin{equation}\label{J(x)}
        J\left(\frac{\theta|u(x)-\bar{u}|}{|\nabla u|_{L^{2n}(X,\frac{\omega^n}{V_\omega})}}\right)\le \frac{1}{\int_X|\nabla u|^{2n}\frac{\omega^n}{V_\omega}}\int_X V_\omega\mathcal{G}|\nabla u|^{2n}\frac{\omega^n}{V_\omega}+b
    \end{equation}
    Integrating \refe{J(x)} over $x\in X$ against $\frac{\omega^n}{V_\omega}$, we have
    \begin{equation}
        \begin{aligned}
            &\int_{x\in X} J\left(\frac{\theta|u(x)-\bar{u}|}{|\nabla u|_{L^{2n}(X,\frac{\omega^n}{V_\omega})}}\right)\frac{\omega^n(x)}{V_\omega}\\
            &\leq\frac{1}{\int_X|\nabla u|^{2n}\frac{\omega^n}{V_\omega}}\int_{x\in X}\int_{y\in X} V_\omega\mathcal{G}(x,y)|\nabla u(y)|^{2n}\frac{\omega^n(y)}{V_\omega}\frac{\omega^n(x)}{V_\omega}+b\\
            &=\frac{1}{\int_X|\nabla u|^{2n}\frac{\omega^n}{V_\omega}}\int_{y\in X}|\nabla u(y)|^{2n}\frac{\omega^n(y)}{V_\omega}\int_{x\in X} \mathcal{G}(x,y) \omega^n(x)+b\\
            &\leq C,
        \end{aligned}
    \end{equation}
    where in the last inequality we have applied Theorem \ref{thmgpss} (0).
    
This completes the proof.
\end{proof}

\section{Proof of Theorem \ref{Linfty}}\label{app-Linfty}
In this subsection, we assume the setting and notations in Theorem \ref{Linfty}. Firstly, thanks to Berman's result \cite[Theorem 1.1]{Be}, we know $u_\theta$ is in $C^{1,\eta}(X,\omega_X)$ for any $\eta\in(0,1)$, and we can fix a sequence of smooth function $u_{\beta}\in PSH(X,\theta),\,\beta\in\mathbb N$, with $|u_\beta-u_\theta|_{C^{1,\eta}(X,\omega_X)}\to0$  as $\beta\to\infty$ and $u_\beta\le1$ on $X$. Also fix a sequence of smooth positive functions $\tau_k:\mathbb R\to\mathbb R$ such that $\tau_k(x)$ decreases to $x\cdot\chi_{\mathbb R_+}(x)$ as $k\to\infty$. Since $\Phi$ is Lipschitz, by Yau's theorem \cite{Y} and the regularity result in \cite[Theorem 1.1]{CH} (also see \cite[Corollary 5]{GPSt}), we fix a $C^{2,\gamma}$ solution $\psi_\Phi$ to 
$$V_\theta^{-1}(\theta+\sqrt{-1}\partial\bar\partial\psi_\Phi)^n=\frac{\Phi(\log (f_\omega+1))}{N_\Phi(\omega)}f_\omega\frac{\omega_X^n}{V_{\omega_X}},\,\,\,\,\,\sup_X\psi_{\Phi}=0,$$
and then define
$$\Psi_1:=u_\beta-\hat\Phi\left(-\frac12\alpha(\psi_\Phi-u_\beta-1)+\Lambda_1\right),$$
where $\hat\Phi(v):=\frac{4N_\Phi(\omega)^{\frac1n}}{\alpha}\int_1^v\Phi(w)^{-\frac1n}dw$, and $\Lambda_1=\Lambda_1(\alpha,\Phi)$ is a fixed positive number with $\frac{\alpha}{2}\hat\Phi'(\Lambda_1)\le1$, e.g. we may define $\Lambda_1$ by
\begin{align}\label{Lambda1}
\Lambda_1:=\max\{\Phi^{-1}(2^nB_\Phi),1\},
\end{align} 
here $\Phi^{-1}$ is understood as the inverse of $\Phi:\mathbb R_+\to\mathbb R_+$.
We may also assume that $\beta$ is large enough with $\psi_\Phi-u_\beta-1\le0$. Also note that $\Psi_1\le u_\beta\le1$.  Directly we have
\begin{align}
&\theta+\sqrt\partial\bar\partial\Psi_1\nonumber\\
&=(1-\frac{\alpha}{2}\hat\Phi')(\theta+\sqrt{-1}\partial\bar\partial u_\beta)+\frac{\alpha}{2}(\hat\Phi')(\theta+\sqrt{-1}\partial\bar\partial\psi_\Phi)-\frac{\alpha^2}{4}(\hat\Phi'')\sqrt{-1}\partial(\psi_\Phi-u_\beta)\wedge\overline\partial(\psi_\Phi-u_\beta)\nonumber\\
&\ge\frac{\alpha}{2}(\hat\Phi')(\theta+\sqrt{-1}\partial\bar\partial\psi_\Phi)\nonumber,
\end{align}
which gives
$$V_\theta^{-1}(\theta+\sqrt\partial\bar\partial\Psi_1)^n\ge2^n\frac{\Phi(\log (f_\omega+1))}{\Phi\left(-\frac12\alpha(\psi_\Phi-u_\beta-1)+\Lambda_1\right)}f_\omega\frac{\omega_X^n}{V_{\omega_X}}$$
and hence
\begin{equation}
f_\omega\frac{\omega_X^n}{V_{\omega_X}}\le\left\{
\begin{aligned}
2^{-n}V_\theta^{-1}(\theta+\sqrt\partial\bar\partial\Psi_1)^n,\,\,\,\,\,\,&if\,\,\,\log (f_\omega+1)\ge-\frac12\alpha(\psi_\Phi-u_\beta-1)+\Lambda_1\nonumber\\
e^{-\frac12\alpha(\psi_\Phi-u_\beta-1)+\Lambda_1}\frac{\omega_X^n}{V_{\omega_X}},\,\,\,\,\,\,&if\,\,\,\log (f_\omega+1)\le-\frac12\alpha(\psi_\Phi-u_\beta-1)+\Lambda_1\nonumber. 
\end{aligned}
\right.
\end{equation}
In particular, we have
\begin{align}\label{volumeformbd}
f_\omega\frac{\omega_X^n}{V_{\omega_X}}\le2^{-n}V_\theta^{-1}(\theta+\sqrt\partial\bar\partial\Psi_1)^n+e^{-\frac12\alpha(\psi_\Phi-u_\beta-1)+\Lambda_1}\frac{\omega_X^n}{V_{\omega_X}}\,\,\,\,\,on\,\,\,\,\,X.
\end{align}
Note that if we set $h_{\Phi,\beta}:=e^{-\frac12\alpha(\psi_\Phi-u_\beta-1)+\Lambda_1}$, then \begin{align}\label{h-int}
\int_Xh_{\Phi,\beta}^2\frac{\omega_X^n}{V_{\omega_X}}\le e^{2\alpha+2\Lambda_1}A_\alpha.
\end{align}
Next, for $w\ge0$ we fix a $C^4$ solution $\psi_{w,k,\beta}$ to the following equation (the right hand side is in $C^{2,\gamma}$):
$$V_\theta^{-1}(\theta+\sqrt{-1}\partial\bar\partial\psi_{w,k,\beta})^n=\frac{\tau_k(-\varphi+\frac12u_\beta+\frac12\Psi_1-w)}{A_{w,k,\beta}}h_{\Phi,\beta}\frac{\omega_X^n}{V_{\omega_X}},\,\,\,\sup_X\psi_{w,k,\beta}=0,$$
with $A_{w,k,\beta}:=\int_X\tau_k(-\varphi+\frac12u_\beta+\frac12\Psi_1-w)h_{\Phi,\beta}\frac{\omega_X^n}{V_{\omega_X}}$. Consider
\begin{align}
\Psi_{w,k,\beta}:=-\epsilon(-\psi_{w,k,\beta}+u_\beta+1+\Lambda_2)^{\frac{n}{n+1}}-(\varphi-\frac12u_\beta-\frac12\Psi_1+w),
\end{align}
where
$$ \epsilon:=\left(\frac{n+1}{n}\right)^{\frac{n}{n+1}}A_{w,k,\beta}^{\frac{1}{n+1}},\,\,\,\,\,\,\Lambda_2:=\frac{2^{n+1}n}{n+1}A_{w,k,\beta}.$$
Then, similar to \cite[page 751-752]{GPTW} or \cite[page 25-26]{Liuj}, we can check that $\Psi_{w,k,\beta}\le0$ on $X$. Actually, let $x_0\in X$ be a point reaching the maximal value of $\Psi_{w,k,\beta}\le0$ and may assume $-(\varphi-\frac12u_\beta-\frac12\Psi_1+w)(x_0)>0$. By direct computation and the maximum principle we have that at $x_0$,

\begin{align}
0\ge\sqrt{-1}\partial\bar\partial\Psi_{w,k,\beta}=&-(\theta+\sqrt{-1}\partial\bar\partial\varphi)+\frac12(\theta+\sqrt{-1}\partial\bar\partial\Psi_1)\nonumber\\
&+\frac{\epsilon n}{n+1}(-\psi_{w,k,\beta}+u_\beta+1+\Lambda_2)^{-\frac{1}{n+1}}(\theta+\sqrt{-1}\partial\bar\partial\psi_{w,k,\beta})\nonumber\\
&+\left(\frac12-\frac{\epsilon n}{n+1}(-\psi_{w,k,\beta}+u_\beta+1+\Lambda_2)^{-\frac{1}{n+1}}\right)(\theta+\sqrt{-1}\partial\bar\partial u_\beta)\nonumber\\
&+\frac{\epsilon n}{(n+1)^2}(-\psi_{w,k,\beta}+u_\beta+1+\Lambda_2)^{-\frac{n+2}{n+1}}\sqrt{-1}\partial(-\psi_{w,k,\beta}+u_\beta)\wedge\bar\partial(-\psi_{w,k,\beta}+u_\beta).
\end{align} 
Since $\frac12-\frac{\epsilon n}{n+1}(-\psi_{w,k,\beta}+u_\beta+1+\Lambda_2)^{-\frac{1}{n+1}}\ge\frac12-\frac{\epsilon n}{n+1}\Lambda_2^{-\frac{1}{n+1}}=0$ and $\sqrt{-1}\partial(-\psi_{w,k,\beta}+u_\beta)\wedge\bar\partial(-\psi_{w,k,\beta}+u_\beta)\ge0$, we conclude that
\begin{align}
(\theta+\sqrt{-1}\partial\bar\partial\varphi)\ge\frac12(\theta+\sqrt{-1}\partial\bar\partial\Psi_1)+
\frac{\epsilon n}{n+1}(-\psi_{w,k,\beta}+u_\beta+1+\Lambda_2)^{-\frac{1}{n+1}}(\theta+\sqrt{-1}\partial\bar\partial\psi_{w,k,\beta})\nonumber,
\end{align}
and hence
\begin{align}\label{volumeformlower} 
&V_\theta^{-1}(\theta+\sqrt{-1}\partial\bar\partial\varphi)^n\nonumber\\
&\ge \frac{1}{2^{n}V_\theta}(\theta+\sqrt{-1}\partial\bar\partial\Psi_1)^n+\left(\frac{\epsilon n}{n+1}(-\psi_{w,k,\beta}+u_\beta+1+\Lambda_2)^{-\frac{1}{n+1}}\right)^nV_\theta^{-1}(\theta+\sqrt{-1}\partial\bar\partial\psi_{w,k,\beta})^n\nonumber\\
&= \frac{1}{2^{n}V_\theta}(\theta+\sqrt{-1}\partial\bar\partial\Psi_1)^n+\left(\frac{\epsilon n}{n+1}(-\psi_{w,k,\beta}+u_\beta+1+\Lambda_2)^{-\frac{1}{n+1}}\right)^n\frac{\tau_k(-\varphi+\frac12u_\beta+\frac12\Psi_1-w)}{A_{w,k,\beta}}h_{\Phi,\beta}\frac{\omega_X^n}{V_{\omega_X}}\nonumber\\
&\ge \frac{1}{2^{n}V_\theta}(\theta+\sqrt{-1}\partial\bar\partial\Psi_1)^n+\left(\frac{\epsilon n}{n+1}(-\psi_{w,k,\beta}+u_\beta+1+\Lambda_2)^{-\frac{1}{n+1}}\right)^n\frac{-\varphi+\frac12u_\beta+\frac12\Psi_1-w}{A_{w,k,\beta}}h_{\Phi,\beta}\frac{\omega_X^n}{V_{\omega_X}}.
\end{align}
On the other hand, recall that by  \eqref{volumeformbd} we have
\begin{align}\label{volumeformbd'} 
V_\theta^{-1}(\theta+\sqrt{-1}\partial\bar\partial\varphi)^n=f_\omega\frac{\omega_X^n}{V_{\omega_X}}\le2^{-n}V_\theta^{-1}(\theta+\sqrt\partial\bar\partial\Psi_1)^n+h_{\Phi,\beta}\frac{\omega_X^n}{V_{\omega_X}}\,\,\,\,\,on\,\,\,\,\,X.
\end{align}
Now combining \eqref{volumeformlower} and \eqref{volumeformbd'} gives that
$$\left(\frac{\epsilon n}{n+1}(-\psi_{w,k,\beta}+u_\beta+1+\Lambda_2)^{-\frac{1}{n+1}}\right)^n\frac{-\varphi+\frac12u_\beta+\frac12\Psi_1-w}{A_{w,k,\beta}}\le1\,\,\,\,\,at\,\,\,\,\,x_0,$$
which, using the definitions of $\epsilon$, is equivalent to $\Psi_{w,k,\beta}(x_0)\le0$. 
In conclusion, we have checked that $\Psi_{w,k,\beta}\le0$ on $X$. Now we set
$$\phi(w):=\int_{\Omega_w}h_{\Phi,\beta}\frac{\omega_X^n}{V_{\omega_X}}, \,\,\,\,\,\Omega_w:=\{-\varphi+\frac12u_\beta+\frac12\Psi_1-w>0\}.$$
Observe that for any $v>0$
\begin{align}\label{ite-1}
v\phi(w+v)&=v\int_{\Omega_{w+v}}h_{\Phi,\beta}\frac{\omega_X^n}{V_{\omega_X}}\le\int_{\Omega_{w+v}}(-\varphi+\frac12u_\beta+\frac12\Psi_1-w)h_{\Phi,\beta}\frac{\omega_X^n}{V_{\omega_X}}\nonumber\\
&\le\int_{\Omega_{w}}(-\varphi+\frac12u_\beta+\frac12\Psi_1-w)h_{\Phi,\beta}\frac{\omega_X^n}{V_{\omega_X}}=A_{w,\beta}=\lim_{k\to\infty}A_{w,k,\beta}.
\end{align}
Using $\Psi_{w,k,\beta}\le0$ on $X$, we estimate, for any $k\ge1$,
\begin{align}
A_{w,\beta}&=\int_{\Omega_{w}}(-\varphi+\frac12u_\beta+\frac12\Psi_1-w)h_{\Phi,\beta}\frac{\omega_X^n}{V_{\omega_X}}\nonumber\\
&\le\epsilon\int_{\Omega_{w}}(-\psi_{w,k,\beta}+u_\beta+1+\Lambda_2)^{\frac{n}{n+1}}h_{\Phi,\beta}\frac{\omega_X^n}{V_{\omega_X}}\nonumber\\
&\le\epsilon\int_{\Omega_{w}}(-\psi_{w,k,\beta}+2+\Lambda_2)^{\frac{n}{n+1}}h_{\Phi,\beta}\frac{\omega_X^n}{V_{\omega_X}}\nonumber\\
&\le\epsilon\left(\int_{\Omega_{w}}(-\psi_{w,k,\beta}+2+\Lambda_2)^{\frac{2n^2}{n+1}}h_{\Phi,\beta}\frac{\omega_X^n}{V_{\omega_X}}\right)^{\frac{1}{2n}}\left(\int_{\Omega_{w}}h_{\Phi,\beta}\frac{\omega_X^n}{V_{\omega_X}}\right)^{1-\frac{1}{2n}}\nonumber\\
&\le\epsilon (b_{\alpha})^{\frac{1}{2n}}\left(\int_{\Omega_{w}}e^{\frac{\alpha}{2}(-\psi_{w,k,\beta}+2+\Lambda_2)}h_{\Phi,\beta}\frac{\omega_X^n}{V_{\omega_X}}\right)^{\frac{1}{2n}}\left(\phi(w)\right)^{1-\frac{1}{2n}}\nonumber\\
&\le\epsilon(b_{\alpha})^{\frac{1}{2n}}e^{\frac{\alpha(2+\Lambda_2)}{4n}}\left(\int_{\Omega_{w}}e^{-\alpha\psi_{w,k,\beta}}\frac{\omega_X^n}{V_{\omega_X}}\right)^{\frac{1}{4n}}\left(\int_{\Omega_{w}}h_{\Phi,\beta}^2\frac{\omega_X^n}{V_{\omega_X}}\right)^{\frac{1}{4n}}\left(\phi(w)\right)^{1-\frac{1}{2n}}\nonumber\\
&\le C^*A^{\frac{1}{n+1}}_{w,k,\beta}\left(\phi(w)\right)^{1-\frac{1}{2n}},
\end{align}
where $C^*:=(b_{\alpha})^{\frac{1}{2n}}\left(\frac{n+1}{n}\right)^{\frac{n}{n+1}}e^{\frac{4\alpha+2\Lambda_1+\alpha\Lambda_2}{4n}}A_\alpha^{\frac{1}{2n}}$, $b_\alpha=b(n,\alpha)$ is a positive number satisfying \begin{align}\label{balpha}
v^{\frac{2n^2}{n+1}}\le b_\alpha e^{\frac{\alpha}{2}v},\,\,\,\,\,\forall v\ge1,
\end{align} 
and we have used the expression of $\epsilon$, and \eqref{h-int}. Letting $k\to\infty$, by a similar argument we have
\begin{align}
\lim_{k\to\infty}\Lambda_2&=\frac{2^{n+1}n}{n+1}\lim_{k\to\infty}A_{w,k,\beta}\nonumber\\
&=\int_{\Omega_{w}}(-\varphi+\frac12u_\beta+\frac12\Psi_1-w)h_{\Phi,\beta}\frac{\omega_X^n}{V_{\omega_X}}\nonumber\\
&\le\int_{X}(-\varphi+1)h_{\Phi,\beta}\frac{\omega_X^n}{V_{\omega_X}}\nonumber\\
&\le b_\alpha\int_{X}e^{\frac{\alpha}{2}(-\varphi+1)}h_{\Phi,\beta}\frac{\omega_X^n}{V_{\omega_X}}\nonumber\\
&\le b_\alpha e^{\frac{\alpha}{2}}\left(\int_{X}e^{-\alpha\varphi}\frac{\omega_X^n}{V_{\omega_X}}\right)^{\frac12}\left(\int_{X}h_{\Phi,\beta}^2\frac{\omega_X^n}{V_{\omega_X}}\right)^{\frac12}\nonumber\\
&\le b_\alpha e^{\frac{3\alpha}{2}+\Lambda_1}A_\alpha\nonumber.
\end{align}
Now we set 
\begin{align}\label{C0}
C_0:=(b_{\alpha})^{\frac{1}{2n}}\left(\frac{n+1}{n}\right)^{\frac{n}{n+1}}e^{\frac{4\alpha+2\Lambda_1+\alpha b_\alpha A_\alpha e^{\frac{3\alpha}{2}+\Lambda_1}}{4n}}A_\alpha^{\frac{1}{2n}},
\end{align}
and arrive at
$$A_{w,\beta}\le (C_0)^{\frac{n+1}{n}}\phi(w)^{1+\frac{n-1}{2n^2}},$$
plugging which into \eqref{ite-1} gives
\begin{align}
v\phi(w+v)\le (C_0)^{\frac{n+1}{n}}\phi(w)^{1+\frac{n-1}{2n^2}}.
\end{align}
Observe that 
$$\phi(w)=\int_{\Omega_w}h_{\Phi,\beta}\frac{\omega_X^n}{V_{\omega_X}}\le w^{-1}\int_{\Omega_w}(-\varphi+1)h_{\Phi,\beta}\frac{\omega_X^n}{V_{\omega_X}}\le\frac{b_\alpha e^{\frac{3\alpha}{2}+\Lambda_1}A_\alpha}{w}.$$
By a classic iteration of De Giorgi (see \cite[Lemma 2]{GPT}), if we define $w_0$ and $w_1$ by
\begin{align}
w_0:=b_\alpha e^{\frac{3\alpha}{2}+\Lambda_1}A_\alpha(2C_0)^{\frac{2n^2}{n-1}}\nonumber,
\end{align}
and
\begin{align}
w_1:=w_0+\frac{1}{1-2^{-\frac{n-1}{2n^2}}}=b_\alpha e^{\frac{3\alpha}{2}+\Lambda_1}A_\alpha(2C_0)^{\frac{2n^2}{n-1}}+(1-2^{-\frac{n-1}{2n^2}})^{-1},
\end{align}
then we have $\phi(w)=0$ for any $w\ge w_1$, namely, $-\varphi+\frac12u_\beta+\frac12\Psi_1\le w_1$ on $X$. Note  that 
$$-\varphi+\frac12u_\beta+\frac12\Psi_1=-\varphi+u_\beta-\frac12\hat\Phi\left(-\frac12\alpha(\psi_\Phi-u_\beta-1)+\Lambda_1\right)\ge-\varphi+u_\beta-4\alpha^{-1}N_\Phi(\omega)^{\frac1n}C_\Phi,$$
therefore,
\begin{align}
-\varphi+u_\beta\le w_1+4\alpha^{-1}N_\Phi(\omega)^{\frac1n}C_\Phi=b_\alpha e^{\frac{3\alpha}{2}+\Lambda_1}A_\alpha(2C_0)^{\frac{2n^2}{n-1}}+(1-2^{-\frac{n-1}{2n^2}})^{-1}+4\alpha^{-1}N_\Phi(\omega)^{\frac1n}C_\Phi.
\end{align}
Letting $\beta\to\infty$ and setting 
\begin{align}\label{C}
C(n,\alpha,A_\alpha,\Phi,B_\Phi,C_\Phi):=b_\alpha e^{\frac{3\alpha}{2}+\Lambda_1}A_\alpha(2C_0)^{\frac{2n^2}{n-1}}+(1-2^{-\frac{n-1}{2n^2}})^{-1}+4\alpha^{-1}B_\Phi^{\frac1n}C_\Phi,
\end{align}
where $C_0$, $b_\alpha$ and $\Lambda_1$ are defined in \eqref{C0}, \eqref{balpha} and \eqref{Lambda1}, respectively, 
we have completed the proof of Theorem \ref{Linfty}.

\section*{Acknowledgements}
The second-named author is grateful to Zhenlei Zhang for teaching an inspiring summer mini-course on complex Monge-Amp\`ere equations at Hunan University in 2023 and valuable conversations. Both authors thank Huai-Dong Cao, Wangjian Jian, Chengjie Yu and Zhenlei Zhang for their comments on the results. This work is partially supported by National Natural Science Foundation of China (No. 12371057), Natural Science Foundation of Hunan Province (No. 2024JJ2006) and The Science and Technology Innovation Program of Hunan Province (No. 2023RC3096).

\end{document}